\newtheorem{cor}[subsection]{Corollary}
\newtheorem{lem}[subsection]{Lemma}
\newtheorem{prop}[subsection]{Proposition}
\newtheorem{conj}[subsection]{Conjecture}
\newtheorem{thm}[subsection]{Theorem}
\newtheorem{rem}[subsection]{Remark}
\theoremstyle{definition}
\theoremstyle{remark}
\newcommand{\nc}{\newcommand}
\nc{\renc}{\renewcommand} \nc{\ssec}{\subsection}
\nc{\sssec}{\subsubsection} \nc{\on}{\operatorname}
\nc\ol{\overline} \nc\ul{\underline} \nc\wt{\widetilde}
\nc\tboxtimes{\wt{\boxtimes}} \nc{\alp}{\alpha}
\nc{\ZZ}{{\mathbb Z}} \nc{\NN}{{\mathbb N}} \nc{\CC}{{\mathbb C}}
\nc{\OO}{{\mathbb O}} \renc{\SS}{{\mathbb S}} \nc{\DD}{{\mathbb
D}}
\nc{\Fq}{{\mathbb F}_q} \nc{\Fqb}{\ol{{\mathbb F}_q}}
\nc{\Ql}{\ol{{\mathbb Q}_\ell}} \nc{\id}{\text{id}} \nc\X{\mathcal
X}
\nc{\Hom}{\on{Hom}} \nc{\Lie}{\on{Lie}} \nc{\Loc}{\on{Loc}}
\nc{\Pic}{\on{Pic}} \nc{\Bun}{\on{Bun}} \nc{\IC}{\on{IC}}
\nc{\Aut}{\on{Aut}} \nc{\rk}{\on{rk}} \nc{\Sh}{\on{Sh}}
\nc{\Perv}{\on{Perv}} \nc{\pos}{{\on{pos}}} \nc{\Conv}{\on{Conv}}
\nc{\Sph}{\on{Sph}} \nc{\Sym}{\on{Sym}}
\nc{\BunBb}{\overline{\Bun}_B} \nc{\Buno}{\overset{o}{\Bun}}
\nc{\BunPb}{{\overline{\Bun}_P}}
\nc{\BunBM}{\overline{\Bun}_{B(M)}}
\nc{\BunPbw}{{\widetilde{\Bun}_P}}
\nc{\BunBP}{\widetilde{\Bun}_{B,P}} \nc{\GUb}{\overline{G/U}}
\nc{\GUPb}{\overline{G/U(P)}}
\nc{\iso}{{\stackrel{\sim}{\longrightarrow}}}
\nc{\Hhom}{\underline{\on{Hom}}} \nc\syminfty{\on{Sym}^{\infty}}
\nc\lal{\ol{\lambda}} \nc\xl{\ol{x}} \nc\thl{\ol{\theta}}
\nc\nul{\ol{\nu}} \nc\mul{\ol{\mu}} \nc\Sum\Sigma
\nc{\oX}{\overset{o}{X}{}}
\nc{\M}{{\mathcal M}} \nc{\N}{{\mathcal N}} \nc{\F}{{\mathcal F}}
\nc{\D}{{\mathcal D}} \nc{\Q}{{\mathcal Q}} \nc{\Y}{{\mathcal Y}}
\nc{\G}{{\mathcal G}} \nc{\E}{{\mathcal E}} \nc{\CalC}{{\mathcal
C}}
\nc\Dh{\widehat{\D}}
\nc{\C}{{\mathcal C}} \nc{\K}{{\mathcal K}}
\renewcommand{\H}{{\mathcal H}}
\nc{\T}{{\mathcal T}} \nc{\V}{{\mathcal V}} \renc{\P}{{\mathcal
P}} \nc{\A}{{\mathcal A}} \nc{\B}{{\mathcal B}} \nc{\U}{{\mathcal
U}}
\nc{\Gr}{\on{Gr}}
\nc{\frn}{{\check{\mathfrak u}(P)}}
\nc\f{{\mathfrak f}}
\nc{\q}{{\mathfrak q}} \nc{\p}{{\mathfrak p}} \nc{\s}{{\mathfrak
s}} \nc\w{\text{w}}
\nc\Spec{\on{Spec}} \nc\Mod{\on{Mod}}
\nc{\tw}{\widetilde{\mathfrak t}} \nc{\pw}{\widetilde{\mathfrak
p}} \nc{\qw}{\widetilde{\mathfrak q}} \nc{\jw}{\widetilde j}
\nc{\grb}{\overline{\Gr}} \nc{\I}{\mathcal I}
\nc{\lambdach}{{\check\lambda}} \nc{\Lambdach}{{\check\Lambda}{}}
\nc{\much}{{\check\mu}} \nc{\omegach}{{\check\omega}}
\nc{\nuch}{{\check\nu}} \nc{\etach}{{\check\eta}}
\nc{\alphach}{{\check\alpha}} \nc{\betach}{{\check\beta}}
\nc{\rhoch}{{\check\rho}} \nc{\ch}{{\check h}}
\nc{\Hb}{\overline{\H}}
\nc{\BA}{{\mathbb{A}}} \nc{\BC}{{\mathbb{C}}}
\nc{\BM}{{\mathbb{M}}} \nc{\BN}{{\mathbb{N}}}
\nc{\BP}{{\mathbb{P}}} \nc{\BR}{{\mathbb{R}}}
\nc{\BZ}{{\mathbb{Z}}} \nc{\BS}{{\mathbb{S}}}
\nc{\CA}{{\mathcal{A}}} \nc{\CB}{{\mathcal{B}}}
\nc{\CE}{{\mathcal{E}}} \nc{\CF}{{\mathcal{F}}}
\nc{\CG}{{\mathcal{G}}} \nc{\CH}{{\mathcal{H}}}
\nc{\CI}{{\mathcal{I}}} \nc{\CL}{{\mathcal{L}}}
\nc{\CM}{{\mathcal{M}}} \nc{\CN}{{\mathcal{N}}}
\nc{\CO}{{\mathcal{O}}} \nc{\CP}{{\mathcal{P}}}
\nc{\CQ}{{\mathcal{Q}}} \nc{\CR}{{\mathcal{R}}}
\nc{\CS}{{\mathcal{S}}} \nc{\CT}{{\mathcal{T}}}
\nc{\CU}{{\mathcal{U}}} \nc{\CV}{{\mathcal{V}}}
\nc{\CW}{{\mathcal{W}}} \nc{\CZ}{{\mathcal{Z}}}
\nc{\cM}{{\check{\mathcal M}}{}} \nc{\csM}{{\check{\mathcal A}}{}}
\nc{\oM}{{\overset{\circ}{\mathcal M}}{}}
\nc{\obM}{{\overset{\circ}{\mathbf M}}{}}
\nc{\oCA}{{\overset{\circ}{\mathcal A}}{}}
\nc{\obA}{{\overset{\circ}{\mathbf A}}{}}
\nc{\ooM}{{\overset{\circ}{M}}{}}
\nc{\osM}{{\overset{\circ}{\mathsf M}}{}}
\nc{\vM}{{\overset{\bullet}{\mathcal M}}{}}
\nc{\nM}{{\underset{\bullet}{\mathcal M}}{}}
\nc{\oD}{{\overset{\circ}{\mathcal D}}{}}
\nc{\obD}{{\overset{\circ}{\mathbf D}}{}}
\nc{\oA}{{\overset{\circ}{\mathbb A}}{}}
\nc{\op}{{\overset{\bullet}{\mathbf p}}{}}
\nc{\cp}{{\overset{\circ}{\mathbf p}}{}}
\nc{\oU}{{\overset{\bullet}{\mathcal U}}{}}
\nc{\oZ}{{\overset{\circ}{\mathcal Z}}{}}
\nc{\ofZ}{{\overset{\circ}{\mathfrak Z}}{}}
\nc{\ff}{{\mathfrak{f}}} \nc{\fv}{{\mathfrak{v}}}
\nc{\fa}{{\mathfrak{a}}} \nc{\fb}{{\mathfrak{b}}}
\nc{\fd}{{\mathfrak{d}}} \nc{\fe}{{\mathfrak{e}}}
\nc{\fg}{{\mathfrak{g}}} \nc{\fgl}{{\mathfrak{gl}}}
\nc{\fh}{{\mathfrak{h}}} \nc{\fri}{{\mathfrak{i}}}
\nc{\fj}{{\mathfrak{j}}} \nc{\fk}{{\mathfrak{k}}}
\nc{\fm}{{\mathfrak{m}}} \nc{\fn}{{\mathfrak{n}}}
\nc{\ft}{{\mathfrak{t}}} \nc{\fu}{{\mathfrak{u}}}
\nc{\fw}{{\mathfrak{w}}} \nc{\fz}{{\mathfrak{z}}}
\nc{\fp}{{\mathfrak{p}}} \nc{\frr}{{\mathfrak{r}}}
\nc{\fs}{{\mathfrak{s}}} \nc{\fsl}{{\mathfrak{sl}}}
\nc{\hsl}{{\widehat{\mathfrak{sl}}}}
\nc{\hgl}{{\widehat{\mathfrak{gl}}}}
\nc{\hg}{{\widehat{\mathfrak{g}}}}
\nc{\chg}{{\widehat{\mathfrak{g}}}{}^\vee}
\nc{\hn}{{\widehat{\mathfrak{n}}}}
\nc{\chn}{{\widehat{\mathfrak{n}}}{}^\vee}
\nc{\fA}{{\mathfrak{A}}} \nc{\fB}{{\mathfrak{B}}}
\nc{\fD}{{\mathfrak{D}}} \nc{\fE}{{\mathfrak{E}}}
\nc{\fF}{{\mathfrak{F}}} \nc{\fG}{{\mathfrak{G}}}
\nc{\fI}{{\mathfrak{I}}} \nc{\fJ}{{\mathfrak{J}}}
\nc{\fK}{{\mathfrak{K}}} \nc{\fL}{{\mathfrak{L}}}
\nc{\fM}{{\mathfrak{M}}} \nc{\fN}{{\mathfrak{N}}}
\nc{\frP}{{\mathfrak{P}}} \nc{\fQ}{{\mathfrak{Q}}}
\nc{\fT}{{\mathfrak{T}}} \nc{\fU}{{\mathfrak{U}}}
\nc{\fV}{{\mathfrak{V}}} \nc{\fW}{{\mathfrak{W}}}
\nc{\fX}{{\mathfrak{X}}} \nc{\fY}{{\mathfrak{Y}}}
\nc{\fZ}{{\mathfrak{Z}}}
\nc{\bb}{{\mathbf{b}}} \nc{\bc}{{\mathbf{c}}}
\nc{\be}{{\mathbf{e}}} \nc{\bj}{{\mathbf{j}}}
\nc{\bn}{{\mathbf{n}}} \nc{\bp}{{\mathbf{p}}}
\nc{\bq}{{\mathbf{q}}} \nc{\bfr}{{\mathbf{r}}}
\nc{\bfu}{{\mathbf{u}}} \nc{\bv}{{\mathbf{v}}}
\nc{\bx}{{\mathbf{x}}} \nc{\by}{{\mathbf{y}}}
\nc{\bw}{{\mathbf{w}}} \nc{\bA}{{\mathbf{A}}}
\nc{\bB}{{\mathbf{B}}} \nc{\bC}{{\mathbf{C}}}
\nc{\bD}{{\mathbf{D}}} \nc{\bF}{{\mathbf{F}}}
\nc{\bH}{{\mathbf{H}}} \nc{\bK}{{\mathbf{K}}}
\nc{\bM}{{\mathbf{M}}} \nc{\bN}{{\mathbf{N}}}
\nc{\bO}{{\mathbf{O}}} \nc{\bS}{{\mathbf{S}}}
\nc{\bV}{{\mathbf{V}}} \nc{\bW}{{\mathbf{W}}}
\nc{\bX}{{\mathbf{X}}} \nc{\bP}{{\mathbf{P}}}
\nc{\bZ}{{\mathbf{Z}}}
\nc{\sA}{{\mathsf{A}}} \nc{\sB}{{\mathsf{B}}}
\nc{\sC}{{\mathsf{C}}} \nc{\sD}{{\mathsf{D}}}
\nc{\sE}{{\mathsf{E}}} \nc{\sF}{{\mathsf{F}}}
\nc{\sK}{{\mathsf{K}}} \nc{\sL}{{\mathsf{L}}}
\nc{\sM}{{\mathsf{M}}} \nc{\sO}{{\mathsf{O}}}
\nc{\sQ}{{\mathsf{Q}}} \nc{\sP}{{\mathsf{P}}}
\nc{\sT}{{\mathsf{T}}} \nc{\sZ}{{\mathsf{Z}}}
\nc{\sV}{{\mathsf{V}}} \nc{\sU}{{\mathsf{U}}}
\nc{\sfp}{{\mathsf{p}}} \nc{\sr}{{\mathsf{r}}}
\nc{\st}{{\mathsf{t}}} \nc{\sfb}{{\mathsf{b}}}
\nc{\sfc}{{\mathsf{c}}} \nc{\sd}{{\mathsf{d}}}
\nc{\sz}{{\mathsf{z}}}
\nc{\BK}{{\bar{K}}}
\nc{\tA}{{\widetilde{\mathbf{A}}}}
\nc{\tB}{{\widetilde{\mathcal{B}}}}
\nc{\tg}{{\widetilde{\mathfrak{g}}}} \nc{\tG}{{\widetilde{G}}}
\nc{\TM}{{\widetilde{\mathbb{M}}}{}}
\nc{\tO}{{\widetilde{\mathsf{O}}}{}}
\nc{\tU}{{\widetilde{\mathfrak{U}}}{}} \nc{\TZ}{{\tilde{Z}}}
\nc{\tx}{{\tilde{x}}} \nc{\tbv}{{\tilde{\bv}}}
\nc{\tfP}{{\widetilde{\mathfrak{P}}}{}} \nc{\tz}{{\tilde{\zeta}}}
\nc{\tmu}{{\tilde{\mu}}}
\nc{\urho}{\underline{\rho}} \nc{\uB}{\underline{B}}
\nc{\uC}{{\underline{\mathbb{C}}}} \nc{\ui}{\underline{i}}
\nc{\uj}{\underline{j}} \nc{\ofP}{{\overline{\mathfrak{P}}}}
\nc{\oB}{{\overline{\mathcal{B}}}}
\nc{\og}{{\overline{\mathfrak{g}}}} \nc{\oI}{{\overline{I}}}
\nc{\eps}{\varepsilon} \nc{\hrho}{{\hat{\rho}}}
\nc{\one}{{\mathbf{1}}} \nc{\two}{{\mathbf{t}}}
\nc{\Rep}{{\mathop{\operatorname{\rm Rep}}}}
\nc{\Tot}{{\mathop{\operatorname{\rm Tot}}}}
\nc{\Ker}{{\mathop{\operatorname{\rm Ker}}}}
\nc{\Hilb}{{\mathop{\operatorname{\rm Hilb}}}}
\nc{\End}{{\mathop{\operatorname{\rm End}}}}
\nc{\Ext}{{\mathop{\operatorname{\rm Ext}}}}
\nc{\CHom}{{\mathop{\operatorname{{\mathcal{H}}\it om}}}}
\nc{\GL}{{\mathop{\operatorname{\rm GL}}}}
\nc{\gr}{{\mathop{\operatorname{\rm gr}}}}
\nc{\Id}{{\mathop{\operatorname{\rm Id}}}}
\nc{\defi}{{\mathop{\operatorname{\rm def}}}}
\nc{\length}{{\mathop{\operatorname{\rm length}}}}
\nc{\supp}{{\mathop{\operatorname{\rm supp}}}}
\nc{\Cliff}{{\mathsf{Cliff}}}
\nc{\Fl}{{\mathsf{Fl}}} \nc{\Fib}{{\mathsf{Fib}}}
\nc{\Coh}{{\mathsf{Coh}}} \nc{\FCoh}{{\mathsf{FCoh}}}
\nc{\reg}{{\text{\rm reg}}}
\nc{\cplus}{{\mathbf{C}_+}} \nc{\cminus}{{\mathbf{C}_-}}
\nc{\cthree}{{\mathbf{C}_*}} \nc{\Qbar}{{\bar{Q}}}
\nc{\bh}{{\bar{h}}} \nc{\bOmega}{{\overline{\Omega}}}
\nc{\seq}[1]{\stackrel{#1}{\sim}}
\nc{\aff}{\operatorname{aff}}
\renewcommand{\gg}{{\mathfrak{g}}}
\newcommand{\hh}{{\mathfrak{h}}}
\dedicatory{To the memory of Izrail Moiseevich Gelfand}
\begin{document}

\author{Boris Feigin, Michael Finkelberg, Igor Frenkel and Leonid Rybnikov}
\title
{Gelfand-Tsetlin algebras and cohomology rings of Laumon spaces}





\address{{\it Address}:\newline
B.F.: Landau Institute for Theoretical Physics, Kosygina st 2,
Moscow 117940, Russia \newline
M.F.: IMU, IITP, and
State University Higher School of Economics, \newline
Department of Mathematics, \newline
20 Myasnitskaya st,
Moscow 101000, Russia \newline
I.F.: Yale University, Department of Mathematics,
PO Box 208283, New Haven, CT 06520, USA \newline
L.R.: Institute for the Information Transmission Problems and \newline
State University Higher School of Economics, \newline
Department of Mathematics, \newline
20 Myasnitskaya st, Moscow 101000, Russia}

\email{\newline bfeigin@gmail.com, fnklberg@gmail.com,
frenkel-igor@yale.edu, leo.rybnikov@gmail.com}

\begin{abstract}
Laumon moduli spaces are certain smooth closures of the moduli spaces of
maps from the projective line to the flag variety of $GL_n$.
We calculate the equvariant cohomology rings of the Laumon moduli spaces
in terms of Gelfand-Tsetlin subalgebra of $U(gl_n)$,
and formulate a conjectural answer for the small quantum cohomology rings in
terms of certain commutative shift of argument subalgebras of $U(gl_n)$.
\end{abstract}
\maketitle

\section{Introduction}
\subsection{Cohomology of Laumon spaces}
The moduli spaces $\CQ_{\ul{d}}$ were introduced by G.~Laumon in~\cite{la1}
and~\cite{la2}. They are certain compactifications
of the moduli spaces of degree
$\ul{d}$ maps from $\BP^1$ to the flag variety $\CB_n$ of $GL_n$. The original
motivation of G.~Laumon was to study the geometric Eisenstein series, but
later the Laumon moduli spaces proved useful also in the computation of quantum
cohomology and $K$-theory of $\CB_n$, see e.g.~\cite{gl},~\cite{bf}. The aim of
the present note is to calculate the cohomology rings of the
Laumon moduli spaces,
and to formulate a conjectural answer for the quantum cohomology rings.

The main tool is the action of the universal enveloping algebra $U(\fgl_n)$
by correspondences~\cite{fk} on the direct sum (over all degrees)
of cohomology of $\CQ_{\ul{d}}$. More precisely, we consider the localized
equivariant cohomology $B:=\bigoplus_{\ul{d}}H^\bullet_{GL_n\times\BC^*}
(\CQ_{\ul{d}})\otimes_{H^\bullet_{GL_n\times\BC^*}(pt)}\on{Frac}
(H^\bullet_{GL_n\times\BC^*}(pt))$ where $\BC^*$ acts as ``loop rotations'' on
the source $\BP^1$, while $GL_n$ acts naturally on the target $\CB_n$.
We also consider a ``local version'' $\fQ_{\ul{d}}$ of the Laumon moduli
space, which is a certain closure of the moduli space of {\em based} maps
of degree $\ul{d}$ from $\BP^1$ to $\CB_n$. This local version does not
carry the action of the whole group $GL_n\times\BC^*$, but only of the Cartan
torus $\widetilde{T}\times\BC^*$. Accordingly, we consider the equivariant
cohomology (resp. localized equivariant cohomology)
$'V=\bigoplus_{\ul{d}}H^\bullet_{\widetilde{T}\times\BC^*}(\fQ_{\ul{d}})$
(resp. $V=\bigoplus_{\ul{d}}H^\bullet_{\widetilde{T}\times\BC^*}(\fQ_{\ul{d}})
\otimes_{H^\bullet_{\widetilde{T}\times\BC^*}(pt)}\on{Frac}
(H^\bullet_{\widetilde{T}\times\BC^*}(pt))$).

According to~\cite{b} (cf. also~\cite{ne} and our Theorem~\ref{brav}),
the above action of $U(\fgl_n)$ identifies $V$ with
the universal Verma module $\fV$. Similarly, $B$ carries the action of
{\em two copies} of $U(\fgl_n)$ by correspondences, and can be identified
with the tensor square $\fB$ of $\fV$ (Theorem~\ref{prav}).
The nonlocalized cohomology $'V$ is identified with a certain
integral form $\ul{\fV}$ of $\fV$, a version of the universal {\em dual} Verma
module (Theorem~\ref{braverman}). We were unable to describe the {\em
nonlocalized} equivariant cohomology of $\bigsqcup_{\ul{d}}\CQ_{\ul{d}}$
as a $U(\fgl_n)^2$-module, but we propose a conjecture~\ref{ancient} in this
direction; it is an equivariant generalization of Conjecture~6.4 of~\cite{fk}.

\subsection{Gelfand-Tsetlin algebra}
The description of the cohomology rings is given in representation theoretic
terms. Namely, the universal enveloping algebra of $\fgl_n$ contains the
{\em Gelfand-Tsetlin subalgebra} $\ul{\fG}$ (a maximal commutative subalgebra).
For a given degree $\ul{d}$, the equivariant cohomology $'V_{\ul{d}}$ is
identified with the weight subspace $\ul{\fV}_{\ul{d}}$. The identity element
$1_{\ul{d}}$ of the cohomology ring goes to the weight component $\fv_{\ul{d}}$
of the Whittaker vector $\fv\in\ul{\fV}$. It turns out that the vector
$1_{\ul{d}}\in\ 'V$ is cyclic for $\ul{\fG}$; hence the equivariant
cohomology ring $H^\bullet_{\widetilde{T}\times\BC^*}(\fQ_{\ul{d}})$ is
identified with a quotient of the Gelfand-Tsetlin subalgebra
(Corollary~\ref{mor}).
Similar results hold for the localized equivariant cohomology of
$\fQ_{\ul{d}}$ and $\CQ_{\ul{d}}$ (Propositions~\ref{gc} and~\ref{any}).

The proof uses two ingredients. First, the localized equivariant cohomology
has a natural basis of classes of the torus fixed points. We check that under
the identification $V\simeq\fV$, this basis goes to the Gelfand-Tsetlin basis
of $\fV$. Also, the cohomology of $\fQ_{\ul{d}}$ contain the
(K\"unneth components of the) Chern classes of the universal tautological
vector bundles on $\fQ_{\ul{d}}\times\BP^1$. The operators of multiplication
by these Chern classes are diagonal in the fixed point basis, and by comparison
to the Gelfand-Tsetlin basis it is possible to identify these operators with
the action of certain generators of $\ul{\fG}$. Finally, since the diagonal
class of $\fQ_{\ul{d}}$ is decomposable,
the above Chern classes generate the cohomology ring of $\fQ_{\ul{d}}$.

In a similar vein, in Proposition~\ref{GC} we compute the localized
equivariant $K$-ring of $\fQ_{\ul{d}}$ in terms of the
``quantum Gelfand-Tsetlin algebra''.

\subsection{Quantum cohomology of Laumon spaces}
The Picard group of the local Laumon space $\fQ_{\ul{d}}$ is free of rank
$n-2$ iff all the entries of $\ul{d}$ are nonzero. It possesses the set
of distinguished generators: the classes of determinant line bundles
$\D_2,\ldots,\D_{n-1}$. Let $\mathsf T$ be a torus with the cocharacter
lattice $\on{Pic}(\fQ_{\ul{d}})$, and let $q_i,\ 2\leq i\leq n-1$, be
the coordinates on $\mathsf T$ corresponding to $\D_i$. We conjecture a
formula for the operator $M_{\D_i}$ of {\em quantum} multiplication by the
first Chern class $c_1(\D_i)$. {\em A priori} this operator lies in
$\End(V_{\ul{d}})[[q_2,\ldots,q_{n-1}]]$, but according to
Conjecture~\ref{Lenya},
it is the Taylor expansion of a rational $\End(V_{\ul{d}})$-valued function
on $\mathsf T$. Moreover, this function arises from the action of a
{\em universal} element $QC_i\in U(\fgl_n)$
(depending on $q_2,\ldots,q_{n-1}$) on the weight space $V_{\ul{d}}$ of the
universal Verma module. The commutant of the collection of all such elements
$\{QC_i(q_2,\ldots,q_{n-1})\}$ is a {\em shift of argument subalgebra}
$\A_q\subset U(\fgl_n)$ (a maximal commutative subalgebra, see~\cite{Ryb1}).

We consider the flat $\End(V_{\ul{d}})$-valued connection on $\mathsf{T}:\
\nabla=\sum_{i=2}^{n-1}q_i\frac{\partial}{\partial q_i}+QC_i$ (the {\em
quantum connection}).
Conjecture~\ref{Lenya} (recently proved by A.~Negut) implies that $\nabla$ is
induced by the {\em Casimir connection}~\cite{dc,fmtv,TL,MTL}
on the Cartan subalgebra
$\fh\subset\fsl_n$ under an embedding $\mathsf{T}\hookrightarrow\fh$.
In particular, $\nabla$ has regular singularities, and its monodromy
factors through the action of the pure braid group $PB_n$
(fundamental group of the complement in $\fh$ to the root hyperplanes)
on the weight space $V_{\ul{d}}$ by the ``quantum Weyl group operators".

\subsection{Acknowledgments}
This note is a result of discussions with many people. In particular, the
``restricted'' correspondences $\ul{\fE}{}_{\ul{d},\alpha_{ij}},
\CE^0_{\ul{d},\alpha_{ij}}, \CE^\infty_{\ul{d},\alpha_{ij}}$
(see~\ref{kuzn},~\ref{if}) and the action of $U(\fgl_n)\ltimes\BC[\fgl_n]$
on nonequivariant cohomology of $\bigsqcup_{\ul{d}}\CQ_{\ul{d}}$
were introduced by A.~Kuznetsov in 1998. The idea to consider the
action of correspondences on the equivariant cohomology of Laumon spaces
was proposed by V.~Schechtman in 1997.
I.~F. would like to thank A.~Licata and A.~Marian
for many useful discussions of
representation theory based on Laumon's spaces.
In fact, some of the calculations associated with the double
$\fgl_n$ action of
Section~\ref{GLS} were carried out independently by A.~Licata and A.~Marian,
also in a nonequivariant setting. Furthermore, we learnt from A.~Marian
that the Chern classes of the tautological bundles generate the
cohomology ring of $\fQ_{\ul{d}}$.
A.~I.~Molev provided us with references on Gelfand-Tsetlin bases.
Above all, our interest in quantum
cohomology of Laumon spaces was inspired by R.~Bezrukavnikov,
A.~Braverman, A.~Negut,
A.~Okounkov and V.~Toledano Laredo during the beautiful special year
2007/2008 at IAS organized by R.~Bezrukavnikov. We are deeply grateful
to all of them. Finally, we are obliged to A.~Tsymbaliuk for the careful
reading of the first version of our note and spotting several mistakes.
B.~F. was partially supported by the grants
RFBR 05-01-01007, RFBR 05-01-01934, and NSh-6358.2006.2.
I.~F. was supported by the NSF grant DMS-0457444.
M.~F. was partially supported by the Oswald Veblen Fund,
RFBR grant 09-01-00242, the Science Foundation of the
SU-HSE awards No.T3-62.0 and 10-09-0015, and the Ministry of Education and
Science of Russian Federation, grant No. 2010-1.3.1-111-017-029.
The work of L.~R. was partially supported
by  RFBR grants 07-01-92214-CNRSL-a, 05-01-02805-CNRSL-a, 09-01-00242,
the Science Foundation of the SU-HSE awards No.T3-62.0 and 10-09-0015, and the Ministry of Education and
Science of Russian Federation, grant No. 2010-1.3.1-111-017-029.
He gratefully acknowledges the support from Deligne 2004 Balzan prize in mathematics.

\section{Local Laumon spaces}

\subsection{Laumon spaces}
We recall the setup of ~\cite{fk},~\cite{bf}. Let $\bC$ be a smooth
projective curve of genus zero. We fix a coordinate $z$ on $\bC$,
and consider the action of $\BC^*$ on $\bC$ such that
$v(z)=v^{-2}z$. We have $\bC^{\BC^*}=\{0,\infty\}$.

We consider an $n$-dimensional vector space $W$ with a basis
$w_1,\ldots,w_n$. This defines a Cartan torus $T\subset
G=GL_n\subset Aut(W)$. We also consider its $2^n$-fold cover,
the bigger torus $\widetilde{T}$, acting on $W$ as follows: for
$\widetilde{T}\ni\ul{t}=(t_1,\ldots,t_n)$ we have
$\ul{t}(w_i)=t_i^2w_i$. We denote by $\CB$ the flag variety of
$G$.

Given an $(n-1)$-tuple of nonnegative integers
$\ul{d}=(d_1,\ldots,d_{n-1})$, we consider the Laumon's
quasiflags' space $\CQ_{\ul{d}}$, see ~\cite{la2}, ~4.2. It is the
moduli space of flags of locally free subsheaves
$$0\subset\CW_1\subset\ldots\subset\CW_{n-1}\subset\CW=W\otimes\CO_\bC$$
such that $\on{rank}(\CW_k)=k$, and $\deg(\CW_k)=-d_k$.

It is known to be a smooth projective variety of dimension
$2d_1+\ldots+2d_{n-1}+\dim\CB$, see ~\cite{la1}, ~2.10.

We consider the following locally closed subvariety
$\fQ_{\ul{d}}\subset\CQ_{\ul{d}}$ (quasiflags based at
$\infty\in\bC$) formed by the flags
$$0\subset\CW_1\subset\ldots\subset\CW_{n-1}\subset\CW=W\otimes\CO_\bC$$
such that $\CW_i\subset\CW$ is a vector subbundle in a
neighbourhood of $\infty\in\bC$, and the fiber of $\CW_i$ at
$\infty$ equals the span $\langle w_1,\ldots,w_i\rangle\subset W$.

It is known to be a smooth quasiprojective variety of dimension
$2d_1+\ldots+2d_{n-1}$.

\subsection{Fixed points}
\label{fixed points} The group $G\times\BC^*$ acts naturally on
$\CQ_{\ul{d}}$, and the group $\widetilde{T}\times\BC^*$ acts
naturally on $\fQ_{\ul{d}}$. The set of fixed points of
$\widetilde{T}\times\BC^*$ on $\fQ_{\ul{d}}$ is finite; we recall
its description from ~\cite{fk}, ~2.11.

Let $\widetilde{\ul{d}}$ be a collection of nonnegative integers
$(d_{ij}),\ i\geq j$, such that $d_i=\sum_{j=1}^id_{ij}$, and for
$i\geq k\geq j$ we have $d_{kj}\geq d_{ij}$. Abusing notation we
denote by $\widetilde{\ul{d}}$ the corresponding
$\widetilde{T}\times\BC^*$-fixed point in $\fQ_{\ul{d}}$:

$\CW_1=\CO_\bC(-d_{11}\cdot0)w_1,$

$\CW_2=\CO_\bC(-d_{21}\cdot0)w_1\oplus\CO_\bC(-d_{22}\cdot0)w_2,$

$\ldots\ \ldots\ \ldots\ ,$

$\CW_{n-1}=\CO_\bC(-d_{n-1,1}\cdot0)w_1\oplus\CO_\bC(-d_{n-1,2}\cdot0)w_2
\oplus\ldots\oplus\CO_\bC(-d_{n-1,n-1}\cdot0)w_{n-1}.$

\subsection{Correspondences}
\label{classic}
For $i\in\{1,\ldots,n-1\}$, and $\ul{d}=(d_1,\ldots,d_{n-1})$, we
set $\ul{d}+i:=(d_1,\ldots,d_i+1,\ldots,d_{n-1})$. We have a
correspondence $\CE_{\ul{d},i}\subset\CQ_{\ul{d}}\times
\CQ_{\ul{d}+i}$ formed by the pairs $(\CW_\bullet,\CW'_\bullet)$
such that for $j\ne i$ we have $\CW_j=\CW'_j$, and
$\CW'_i\subset\CW_i$, see ~\cite{fk}, ~3.1. In other words,
$\CE_{\ul{d},i}$ is the moduli space of flags of locally free
sheaves
$$0\subset\CW_1\subset\ldots\CW_{i-1}\subset\CW'_i\subset\CW_i\subset
\CW_{i+1}\ldots\subset\CW_{n-1}\subset\CW$$ such that
$\on{rank}(\CW_k)=k$, and $\deg(\CW_k)=-d_k$, while
$\on{rank}(\CW'_i)=i$, and $\deg(\CW'_i)=-d_i-1$.

According to ~\cite{la1}, ~2.10, $\CE_{\ul{d},i}$ is a smooth
projective algebraic variety of dimension
$2d_1+\ldots+2d_{n-1}+\dim\CB+1$.

We denote by $\bp$ (resp. $\bq$) the natural projection
$\CE_{\ul{d},i}\to\CQ_{\ul{d}}$ (resp.
$\CE_{\ul{d},i}\to\CQ_{\ul{d}+i}$). We also have a map $\bfr:\
\CE_{\ul{d},i}\to\bC,$
$$(0\subset\CW_1\subset\ldots\CW_{i-1}\subset\CW'_i\subset\CW_i\subset
\CW_{i+1}\ldots\subset\CW_{n-1}\subset\CW)\mapsto\on{supp}(\CW_i/\CW'_i).$$

The correspondence $\CE_{\ul{d},i}$ comes equipped with a natural
line bundle $\CL_i$ whose fiber at a point
$$(0\subset\CW_1\subset\ldots\CW_{i-1}\subset\CW'_i\subset\CW_i\subset
\CW_{i+1}\ldots\subset\CW_{n-1}\subset\CW)$$ equals
$\Gamma(\bC,\CW_i/\CW'_i)$.

Finally, we have a transposed correspondence
$^\sT\CE_{\ul{d},i}\subset \CQ_{\ul{d}+i}\times\CQ_{\ul{d}}$.

Restricting to $\fQ_{\ul{d}}\subset\CQ_{\ul{d}}$ we obtain the
correspondence
$\fE_{\ul{d},i}\subset\fQ_{\ul{d}}\times\fQ_{\ul{d}+i}$ together
with line bundle $\fL_i$ and the natural maps $\bp:\
\fE_{\ul{d},i}\to\fQ_{\ul{d}},\ \bq:\
\fE_{\ul{d},i}\to\fQ_{\ul{d}+i},\ \bfr:\
\fE_{\ul{d},i}\to\bC-\infty$. We also have a transposed
correspondence $^\sT\fE_{\ul{d},i}\subset
\fQ_{\ul{d}+i}\times\fQ_{\ul{d}}$. It is a smooth quasiprojective
variety of dimension $2d_1+\ldots+2d_{n-1}+1$.

\subsection{Equivariant cohomology}
We denote by ${}'V$ the direct sum of equivariant (complexified)
cohomology:
${}'V=\oplus_{\ul{d}}H^\bullet_{\widetilde{T}\times\BC^*}(\fQ_{\ul{d}})$.
It is a module over
$H^\bullet_{\widetilde{T}\times\BC^*}(pt)=\BC[\ft\oplus\BC]=
\BC[x_1,\ldots,x_n,\hbar]$. Here $\ft\oplus\BC$ is the
Lie algebra of $\widetilde{T}\times\BC^*$. We define $\hbar$ as twice
the positive generator of $H^2_{\BC^*}(pt,\BZ)$. Similarly, we define
$x_i\in H^2_{\widetilde{T}}(pt,\BZ)$ in terms of the corresponding
one-parametric subgroup.
We define $V=\ {}'V\otimes_{H^\bullet_{\widetilde{T}\times\BC^*}(pt)}
\on{Frac}(H^\bullet_{\widetilde{T}\times\BC^*}(pt))$.

We have an evident grading $V=\oplus_{\ul{d}}V_{\ul{d}},\
V_{\ul{d}}=H^\bullet_{\widetilde{T}\times\BC^*}(\fQ_{\ul{d}})
\otimes_{H^\bullet_{\widetilde{T}\times\BC^*}(pt)}
\on{Frac}(H^\bullet_{\widetilde{T}\times\BC^*}(pt))$.

\subsection{Universal Verma module}
We denote by
$\fU$ the universal enveloping algebra of $\fgl_n$ over the field
$\BC(\ft\oplus\BC)$. For $1\leq j,k\leq n$ we denote by
$E_{jk}\in\fgl_n\subset\fU$ the usual elementary matrix.
The standard Chevalley generators are expressed as follows:
$$\fe_i:=E_{i+1,i},\
\ff_i:=E_{i,i+1},\ \fh_i:=E_{i+1,i+1}-E_{ii}$$
(note that $\fe_i$ is represented by a {\em lower} triangular matrix).
Note also that $\fU$ is generated by $E_{ii},\ 1\leq i\leq n,\
E_{i,i+1},E_{i+1,i},\ 1\leq i\leq n-1$.
We denote by $\fU_{\leq0}$ the subalgebra of $\fU$ generated by
$E_{ii},\ 1\leq i\leq n,\ E_{i,i+1},\ 1\leq i\leq n-1$.
It acts on the field $\BC(\ft\oplus\BC)$ as follows:
$E_{i,i+1}$ acts trivially for any $1\leq i\leq n-1$, and
$E_{ii}$ acts by multiplication by
$\hbar^{-1}x_i+i-1$. We define the {\em universal Verma module}
$\fV$ over $\fU$ as $\fU\otimes_{\fU_{\leq0}}\BC(\ft\oplus\BC)$.
The universal Verma module $\fV$ is an irreducible $\fU$-module.

\subsection{The action of generators}
\label{operators} The grading and the correspondences
$^\sT\fE_{\ul{d},i},\fE_{\ul{d},i}$ give rise to the following
operators on $V$ (note that though $\bp$ is not proper, $\bp_*$ is
well defined on the localized equivariant cohomology due to the
finiteness of the fixed point sets and smoothness of $\fE_{\ul{d},i}$):

$E_{ii}=\hbar^{-1}x_i+d_{i-1}-d_i+i-1:\ V_{\ul{d}}\to
V_{\ul{d}}$;

$\fh_i=\hbar^{-1}(x_{i+1}-x_i)+2d_i-d_{i-1}-d_{i+1}+1:\ V_{\ul{d}}\to
V_{\ul{d}}$;

$\ff_i=E_{i,i+1}=\bp_*\bq^*:\ V_{\ul{d}}\to V_{\ul{d}-i}$;

$\fe_i=E_{i+1,i}=-\bq_*\bp^*:\
V_{\ul{d}}\to V_{\ul{d}+i}$.

\begin{thm}
\label{brav}
The operators $\fe_i=E_{i+1,i},E_{ii},\ff_i=E_{i,i+1}$ on $V$ defined
in ~\ref{operators} satisfy the relations in $\fU$, i.e.
they give rise to the action of $\fU$ on $V$.
There is a unique isomorphism $\Psi$ of $\fU$-modules
$V$ and $\fV$ carrying $1\in H^0_{\widetilde{T}\times\BC^*}(\fQ_0)\subset V$
to the lowest weight vector $1\in\BC(\ft\oplus\BC)\subset\fV$.
\end{thm}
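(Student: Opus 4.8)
The plan is to verify the relations of $\fU$ by a local, fixed-point computation, and then to upgrade the resulting $\fU$-module map to an isomorphism by a weight-space dimension count. First I would recall from \cite{fk} the explicit description of the action of $\fe_i,\ff_i$ on the fixed-point basis of $V$: since the torus fixed points on $\fQ_{\ul d}$ are indexed by the arrays $\widetilde{\ul d}=(d_{ij})$ of \ref{fixed points}, and the correspondences $\fE_{\ul d,i}$, $^\sT\fE_{\ul d,i}$ themselves have finite fixed-point sets, the operators $\bp_*\bq^*$ and $\bq_*\bp^*$ are computed by localization: one writes $[\fE_{\ul d,i}]$ in terms of its fixed points, divides by the relevant equivariant Euler classes of normal bundles, and reads off the matrix coefficients as explicit rational functions in $x_1,\ldots,x_n,\hbar$ and the $d_{ij}$. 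This is exactly the bookkeeping already done in \cite{fk}; I would quote those formulas. The diagonal operators $E_{ii}$ and $\fh_i$ are given directly in \ref{operators}.

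Next I would check the defining relations of $\fgl_n$ on these explicit matrices. The relations $[E_{ii},\fe_j]$, $[E_{ii},\ff_j]$ and $[\fh_i,\fe_j],[\fh_i,\ff_j]$ are immediate from the fact that $\fe_j,\ff_j$ shift the degree $\ul d$ by $\pm j$ and the prescribed eigenvalue shift of $E_{ii}$, $\fh_i$ matches the Chevalley weight; this is a one-line check. The Serre relations and the relation $[\fe_i,\ff_j]=\delta_{ij}\fh_i$ reduce, after passing to fixed points, to identities among the explicit matrix coefficients; the key point is that $\fe_i$ and $\fe_j$ (resp. $\ff_i,\ff_j$) commute for $|i-j|\ge 2$ because the corresponding modifications $\CW'_i\subset\CW_i$ and $\CW'_j\subset\CW_j$ are supported on disjoint pieces of the flag, while for $|i-j|=1$ one verifies the length-three relation on each pair of composable correspondences. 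Alternatively — and this is probably the cleaner route — I would invoke the previously-announced identification (\thmref{brav} is cited as reproving the result of \cite{b}; see also \cite{ne}): the relations hold because the same operators, computed on the \emph{localized} cohomology of the global spaces $\CQ_{\ul d}$, are known to satisfy them, and restriction to $\fQ_{\ul d}$ is compatible with the correspondences. In the write-up I would present the direct fixed-point verification as the core argument and remark on the compatibility with \cite{b,ne}.

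For the isomorphism: the Whittaker/lowest-weight vector $1\in H^0(\fQ_0)$ is killed by all $\ff_i=\bp_*\bq^*$ (there is no $\fQ_{\ul d-i}$ with $\ul d-i$ still nonnegative when $\ul d=0$) and is an eigenvector of each $E_{ii}$ with eigenvalue $\hbar^{-1}x_i+i-1$ — precisely the eigenvalue by which $E_{ii}$ acts on the cyclic vector of $\fV=\fU\otimes_{\fU_{\le 0}}\BC(\ft\oplus\BC)$. Hence by the universal property of the Verma module there is a unique $\fU$-module map $\Psi\colon\fV\to V$ sending $1\mapsto 1$, and it is surjective because the $\fe_i=-\bq_*\bp^*$ raise the degree and, by the fixed-point formulas, act with invertible leading coefficients, so $1$ generates $V$ over $\fU$ (indeed already over the $\fe_i$'s). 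Injectivity is the one genuinely nontrivial input: $\fV$ is stated to be irreducible as a $\fU$-module over $\BC(\ft\oplus\BC)$, so any nonzero $\fU$-map out of it is injective; thus $\Psi$ is an isomorphism, and uniqueness is the universal property again.

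The main obstacle is the verification of the $\fsl_n$-relations on the fixed-point matrices — in particular the relation $[\fe_i,\ff_i]=\fh_i$, which forces a cancellation between the two rational-function expressions for $\bq_*\bp^*\bp_*\bq^*$ and $\bp_*\bq^*\bq_*\bp^*$ acting on a given fixed point, plus the Serre relations for adjacent indices. These are finite but delicate rational-function identities in the variables $x_i,\hbar,d_{ij}$; organizing them cleanly (e.g.\ by factoring through the $\fsl_2$- or $\fsl_3$-subalgebra attached to each index or adjacent pair, where the computation is a small $A_1$ or $A_2$ check already in the literature) is where the real work lies. Everything after the relations hold is formal, modulo the cited irreducibility of $\fV$.
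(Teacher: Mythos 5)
Your proposal is correct and follows essentially the same route as the paper, which simply defers to the proof of Theorem~2.12 of~\cite{bf} (cf.~\cite{ne}): one verifies the $\fU$-relations on the explicit fixed-point matrix coefficients obtained by localization, and then uses the universal property of the Verma module together with the irreducibility of $\fV$ (plus the matching count of fixed points against the dimension of the weight space $\fV_{\ul{d}}$, which is what really underlies your surjectivity step) to produce the unique isomorphism $\Psi$.
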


The proof is entirely similar to the proof of Theorem~2.12 of~\cite{bf};
cf. also~\cite{ne}. \qed

\subsection{Fixed point basis}
\label{classical}

According to the Localization theorem in equivariant cohomology (see e.g.
~\cite{cg}), restriction to the $\widetilde{T}\times\BC^*$-fixed
point set induces an isomorphism
$$H^\bullet_{\widetilde{T}\times\BC^*}(\fQ_{\ul{d}})
\otimes_{H^\bullet_{\widetilde{T}\times\BC^*}(pt)}
\on{Frac}(H^\bullet_{\widetilde{T}\times\BC^*}(pt))\to
H^\bullet_{\widetilde{T}\times\BC^*}(\fQ_{\ul{d}}^{\widetilde{T}\times\BC^*})
\otimes_{H^\bullet_{\widetilde{T}\times\BC^*}(pt)}
\on{Frac}(H^\bullet_{\widetilde{T}\times\BC^*}(pt))$$

The fundamental cycles $[\widetilde{\ul{d}}]$ of the
$\widetilde{T}\times\BC^*$-fixed points $\widetilde{\ul{d}}$ (see
~\ref{fixed points}) form a basis in
$\oplus_{\ul{d}}H^\bullet_{\widetilde{T}\times\BC^*}
(\fQ_{\ul{d}}^{\widetilde{T}\times\BC^*})
\otimes_{H^\bullet_{\widetilde{T}\times\BC^*}(pt)}\on{Frac}
(H^\bullet_{\widetilde{T}\times\BC^*}(pt))$. The embedding of a point
$\widetilde{\ul{d}}$ into $\fQ_{\ul{d}}$ is a proper morphism, so the
direct image in the equivariant cohomology is well defined, and we will
denote by $[\widetilde{\ul{d}}]\in V_{\ul{d}}$ the direct image of the
fundamental cycle of the point $\widetilde{\ul{d}}$. The set
$\{[\widetilde{\ul{d}}]\}$ forms a basis of $V$.

The matrix coefficients of the operators $\fe_i,\ff_i$ in the basis
$\{[\widetilde{\ul{d}}]\}$
were computed in~\cite{bf}; cf. also~\cite{ne}~8.2. The result is:

\begin{prop}
\label{feigin'}
The matrix coefficients of the operators $\fe_i,\ff_i$ in the basis
$\{[\widetilde{\ul{d}}]\}$ are as follows:
$$\fe_{i[\widetilde{\ul{d}},\widetilde{\ul{d}}{}']}=
-\hbar^{-1}\prod_{j\ne k\leq i}(x_j-x_k+(d_{i,k}-d_{i,j})\hbar)^{-1}
\prod_{k\leq i-1}(x_j-x_k+(d_{i-1,k}-d_{i,j})\hbar)$$ if
$d'_{i,j}=d_{i,j}+1$ for certain $j\leq i$;

$$\ff_{i[\widetilde{\ul{d}},\widetilde{\ul{d}}{}']}=
\hbar^{-1}\prod_{j\ne k\leq i}(x_k-x_j+(d_{i,j}-d_{i,k})\hbar)^{-1}
\prod_{k\leq i+1}(x_k-x_j+(d_{i,j}-d_{i+1,k})\hbar)$$ if
$d'_{i,j}=d_{i,j}-1$ for certain $j\leq i$;

All the other matrix coefficients of $\fe_i,\ff_i$ vanish.
\end{prop}

The proof is entirely similar to that of Corollary~2.20 of~\cite{bf}.

\subsection{Gelfand-Tsetlin basis of the universal Verma module}
\label{classical'}
We will follow the notations of~\cite{m} on the Gelfand-Tsetlin bases in
representations of $\fgl_n$.
To a collection $\widetilde{\ul{d}}=(d_{ij}),\ n-1\geq i\geq j$ we associate
a {\em Gelfand-Tsetlin pattern}
$\Lambda=\Lambda(\widetilde{\ul{d}}):=(\lambda_{ij}),\ n\geq i\geq j$
as follows:
$\lambda_{nj}:=\hbar^{-1}x_j+j-1,\ n\geq j\geq 1;\
\lambda_{ij}:=\hbar^{-1}x_j+j-1-d_{ij},\ n-1\geq i\geq j\geq1$.
Now we define $\fV\ni\xi_{\widetilde{\ul{d}}}=\xi_\Lambda:=
(-\hbar)^{-|\ul{d}|}\Psi[\widetilde{\ul{d}}]$.
According to Proposition~\ref{feigin'}, the matrix coefficients of the
operators $\fe_i,\ff_i$ in the basis $\{\xi_{\widetilde{\ul{d}}}\}$ are as
follows:

$$\fe_{i,\Lambda(\widetilde{\ul{d}}),\Lambda(\widetilde{\ul{d}}{}')}=
\prod_{j\ne k\leq i}(x_j-x_k+(d_{i,k}-d_{i,j})\hbar)^{-1}
\prod_{k\leq i-1}(x_j-x_k+(d_{i-1,k}-d_{i,j})\hbar)$$ if
$d'_{i,j}=d_{i,j}+1$ for certain $j\leq i$;

$$\ff_{i,\Lambda(\widetilde{\ul{d}}),\Lambda(\widetilde{\ul{d}}{}')}=
-\hbar^{-2}\prod_{j\ne k\leq i}(x_k-x_j+(d_{i,j}-d_{i,k})\hbar)^{-1}
\prod_{k\leq i+1}(x_k-x_j+(d_{i,j}-d_{i+1,k})\hbar)$$ if
$d'_{i,j}=d_{i,j}-1$ for certain $j\leq i$;

All the other matrix coefficients of $\fe_i,\ff_i$ vanish.

The above matrix coefficients, under appropriate specialization of
$x_1,\ldots,x_n$, coincide with the matrix coefficients
of $\fe_i,\ff_i$ in the {\em Gelfand-Tsetlin} basis of an
irreducible finite dimensional $\fgl_n$-module, cf. formulas~(2.7),~(2.6)
of~Theorem~2.3 of~\cite{m}. For this reason we suggest to call the basis
$\{\xi_{\widetilde{\ul{d}}}\}$ (over all collections $\widetilde{\ul{d}}$)
of $\fV$ the {\em Gelfand-Tsetlin basis}. Algebraically,
$\xi_{\widetilde{\ul{d}}}=\xi_\Lambda\in\fV$ can be defined
by the formulas (2.9)--(2.11) of~\cite{m} (where $\xi=\xi_0=1\in\fV$).
Up to proportionality, the Gelfand-Tsetlin basis can also be defined as an
eigenbasis of the Gelfand-Tsetlin subalgebra of $\fU$.

For a future reference, let us formulate once again the relation between
the fixed point base of $V$ and the Gelfand-Tsetlin base of $\fV$:

\begin{thm}
\label{feigin}
The isomorphism $\Psi:\ V\iso\fV$ of Theorem~\ref{brav} takes
$[\widetilde{\ul{d}}]$ to $(-\hbar)^{|\ul{d}|}\xi_{\widetilde{\ul{d}}}$
where $|\ul{d}|=d_1+\ldots+d_{n-1}$.
\end{thm}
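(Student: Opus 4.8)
The plan is to induct on $|\ul d|=d_1+\ldots+d_{n-1}$. For $|\ul d|=0$ the space $\fQ_0$ is a point, $[\widetilde{\ul{0}}]=1$ and $\xi_{\widetilde{\ul{0}}}=\xi_0=1$, so the claim holds. For the inductive step I would use that any $v\in V_{\ul d}$ with $\ul d\ne0$ is determined by the tuple $(\ff_1v,\ldots,\ff_{n-1}v)$ of its images under $\ff_i=E_{i,i+1}$: the $\ff_i$ generate the nilpotent radical $\fn$ of the upper triangular Borel of $\fgl_n$, so a vector in $V_{\ul d}$, $\ul d\ne0$, killed by all of them is killed by $\fn$ and hence vanishes, the $\fn$-invariants of the irreducible module $V\simeq\fV$ being the one-dimensional lowest weight space $V_0$. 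By \thmref{brav}, $\Psi$ intertwines the $\fU$-action, so $\Psi(\ff_i[\widetilde{\ul d}])=\ff_i\Psi([\widetilde{\ul d}])$; therefore, to prove $\Psi([\widetilde{\ul d}])=(-\hbar)^{-|\ul d|}\xi_{\widetilde{\ul d}}$ it suffices to verify, for each $i$ with $d_i>0$, the identity $\ff_i[\widetilde{\ul d}]=(-\hbar)^{-|\ul d|}\Psi^{-1}(\ff_i\xi_{\widetilde{\ul d}})$ in $V_{\ul d-i}$ (for $d_i=0$ both sides are $0$). By the induction hypothesis $\Psi^{-1}$ carries $\xi_{\widetilde{\ul e}}$ to $(-\hbar)^{|\ul e|}[\widetilde{\ul e}]$ whenever $|\ul e|=|\ul d|-1$, so this is an identity comparing the matrix coefficients of $\ff_i$ in the fixed point basis of $V_{\ul d-i}$ with those of $E_{i,i+1}$ in the Gelfand-Tsetlin basis of $\fV$, up to the rescaling $[\widetilde{\ul d}]\leftrightarrow(-\hbar)^{-|\ul d|}\xi_{\widetilde{\ul d}}$.

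I would compute the matrix coefficients of $\ff_i=\bp_*\bq^*$ in the fixed point basis by equivariant localization, exactly as in the proof of Theorem~2.12 of~\cite{bf}. With $\bp\colon\fE_{\ul d-i,i}\to\fQ_{\ul d-i}$ and $\bq\colon\fE_{\ul d-i,i}\to\fQ_{\ul d}$, the pullback $\bq^*[\widetilde{\ul d}]$ is supported, in the localization basis of $\fE_{\ul d-i,i}$, on the fixed points $F$ with $\bq(F)=\widetilde{\ul d}$, where its value is the equivariant Euler class $e(T_{\widetilde{\ul d}}\fQ_{\ul d})$; applying $\bp_*$ and restricting to the fixed points of $\fQ_{\ul d-i}$ gives
$$\ff_i[\widetilde{\ul d}]=\sum_{\widetilde{\ul e}}\Big(\sum_{F\colon\,\bq(F)=\widetilde{\ul d},\ \bp(F)=\widetilde{\ul e}}\frac{e(T_{\widetilde{\ul d}}\fQ_{\ul d})}{e(T_F\fE_{\ul d-i,i})}\Big)[\widetilde{\ul e}],$$
where $\widetilde{\ul e}$ runs over the patterns obtained from $\widetilde{\ul d}$ by lowering one entry $d_{ij}$ of the $i$-th row by $1$ and remaining a legal pattern, and there is exactly one such $F$ for each such $\widetilde{\ul e}$. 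The $\widetilde{T}\times\BC^*$-weights of $T_{\widetilde{\ul d}}\fQ_{\ul d}$ and $T_F\fE_{\ul d-i,i}$ were computed in~\cite{fk} (see also~\cite{bf}); every weight occurring has the form $(x_a-x_b)+m\hbar$ with $m\in\BZ$.

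It remains to match this with representation theory. By construction the pattern entries are $\lambda_{nj}=\hbar^{-1}x_j+j-1$ and $\lambda_{ij}=\hbar^{-1}x_j+j-1-d_{ij}$ for $i\le n-1$, so the differences $\lambda_{ik}-\lambda_{i\pm1,l}$ are precisely $\hbar^{-1}$ times the weights produced by the localization computation. Molev's formula for the action of $E_{i,i+1}$ on the Gelfand-Tsetlin basis (\cite{m}, Sect.~2) expresses $\ff_i\xi_{\widetilde{\ul d}}$ as a sum over the same one-step lowerings $\widetilde{\ul e}$, with coefficient a product of such differences; comparing this, after the substitution of the $\lambda$'s, with the Euler-class ratio above, one finds that the geometric and representation-theoretic matrix coefficients agree up to one power of $-\hbar$ per step. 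Tracking this factor through the induction is exactly what produces the normalization $(-\hbar)^{-|\ul d|}$ in the statement; geometrically it is accounted for by the $\widetilde{T}\times\BC^*$-weight of the tautological line bundle $\fL_i$ on the correspondence, equivalently by the sign in $\fe_i=-\bq_*\bp^*$.

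I expect the main obstacle to be this last step — unwinding both the equivariant Euler-class ratio and Molev's Gelfand-Tsetlin coefficient into the same product of linear forms, and fixing the normalization — a concrete but delicate computation, sensitive to the several conventions at play: the nonstandard choice $\fe_i:=E_{i+1,i}$, the loop-rotation weight $\hbar$ coming from $v(z)=v^{-2}z$, the shifts $j-1$ and $i-1$ in $\lambda_{ij}$ and in the eigenvalue of $E_{ii}$, and the sign in $\fe_i=-\bq_*\bp^*$. Secondary points are the well-definedness of $\bp_*$ on localized equivariant cohomology although $\bp$ is not proper (ensured, as noted in~\ref{operators}, by finiteness of the fixed-point sets and smoothness of $\fE_{\ul d-i,i}$), and the combinatorial check that the $\widetilde{T}\times\BC^*$-fixed points of $\fE_{\ul d-i,i}$ lying over a pair $(\widetilde{\ul e},\widetilde{\ul d})$ reduce to a single point, corresponding to a legal one-box modification of the pattern.
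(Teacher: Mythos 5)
Your proposal follows essentially the same route as the paper: the paper also reduces the statement to matching the matrix coefficients of $\fe_i,\ff_i$ in the fixed-point basis (computed by equivariant localization, quoting \cite{bf} and \cite{ne}~8.2 for the explicit product formulas) against Molev's formulas (2.6)--(2.7) for the Gelfand-Tsetlin basis, the normalization $(-\hbar)^{-|\ul{d}|}$ coming out of that comparison by inspection; your inductive scaffolding via the $\ff_i$ and irreducibility of $\fV$ just makes explicit the cyclicity argument the paper leaves implicit. The one piece you defer --- unwinding the Euler-class ratios into the same linear factors as Molev's coefficients --- is exactly the content the paper imports from \cite{bf} (Corollary~2.20 there), so your plan is sound and at the same level of detail as the published proof.
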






\begin{rem}
{\em One can prove that the isomorphism
$\Psi:\ V\iso\fV$ of Theorem~\ref{brav} takes $[\widetilde{\ul{d}}]$
to $\xi_{\widetilde{\ul{d}}}$} up to proportionality {\em without explicitly
computing the matrix coefficients. In effect, the
Gelfand-Tsetlin basis is uniquely (up to proportionality) characterized
by the property that the matrix coefficients of $\fe_k,\ff_k$ with respect to
$\xi_\Lambda,\xi_{\Lambda'}$ vanish if $\lambda_{ij}\ne\lambda'_{ij}$ for some
$i>k$. Now it is immediate to see that the matrix coefficients of
$\fe_k,\ff_k$ with respect to $[\widetilde{\ul{d}}],[\widetilde{\ul{d}}{}']$
vanish if $d_{ij}\ne d'_{ij}$ for some $i>k$.
}
\end{rem}

\subsection{Determinant line bundles}
We consider the line bundle $\D_k$
on $\fQ_{\ul{d}}$ whose fiber at the point $(\CW_\bullet)$ equals
$\det R\Gamma(\bC,\CW_k)$.

\begin{lem}
\label{ias}
$\D_k$ is a $\widetilde{T}\times\BC^*$-equivariant line bundle, and
the character of $\widetilde{T}\times\BC^*$ acting in the fiber of
$\D_k$ at a point $\widetilde{\ul{d}}=(d_{ij})$ equals
$\sum_{j\leq k}(1-d_{kj})x_j+\frac{d_{kj}(d_{kj}-1)}{2}\hbar$.
\end{lem}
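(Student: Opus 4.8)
The plan is to compute the $\widetilde{T}\times\BC^*$-character of the fiber of $\D_k=\det R\Gamma(\bC,\CW_k)$ at the fixed point $\widetilde{\ul{d}}=(d_{ij})$ by using the explicit description of the fixed sheaves from \ref{fixed points}. At such a point we have the direct sum decomposition $\CW_k=\bigoplus_{j=1}^k\CO_\bC(-d_{kj}\cdot 0)w_j$, so $R\Gamma(\bC,\CW_k)=\bigoplus_{j=1}^k R\Gamma(\bC,\CO_\bC(-d_{kj}\cdot 0))\otimes\langle w_j\rangle$, and the determinant is the tensor product over $j$ of the (super)determinants of the individual summands. Thus it suffices to compute, for a single $j$, the $\widetilde{T}\times\BC^*$-character of $\det R\Gamma(\bC,\CO_\bC(-d\cdot 0))\otimes\langle w_j\rangle$ where $d=d_{kj}$, and then sum over $j\leq k$.

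First I would fix the equivariant structure. The torus $\widetilde{T}$ acts on $\langle w_j\rangle$ by the character $2x_j$ in the conventions of the excerpt (so that $\ul{t}(w_j)=t_j^2 w_j$ corresponds to ``$x_j$'' being half the weight; I must track the factor of $2$ carefully, since the final formula has $x_j$, not $2x_j$ — this is exactly the normalization ``$x_i$ is twice the generator'' / ``$\hbar$ is twice the generator'' built into the paper, and the computation must be reconciled with it). The $\BC^*$-action on $\bC$ is $v(z)=v^{-2}z$, so $z$ has weight $-\hbar$ (again using $\hbar=$ twice the generator), hence the coordinate ring $\BC[z]$ near $0$ has character $\sum_{m\geq 0}(-m\hbar)$ as a graded space. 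Next I would compute $R\Gamma(\bC,\CO_\bC(-d\cdot 0))$: for $d\geq 1$ this is concentrated in degree $1$ (since $\deg<0$ on $\BP^1$), with $H^1$ of dimension $d-1$... more precisely I would use the standard exact sequence $0\to\CO_\bC(-d\cdot 0)\to\CO_\bC\to\CO_{d\cdot 0}\to 0$, giving in $K$-theory $[R\Gamma(\CO_\bC(-d\cdot 0))]=[R\Gamma(\CO_\bC)]-[\Gamma(\CO_{d\cdot0})]$. The determinant of $R\Gamma(\CO_\bC)$ contributes a character independent of $d$ (which one checks is trivial, i.e. contributes $0$ to the sum, or is absorbed into the overall normalization), and $\Gamma(\CO_{d\cdot 0})=\BC[z]/z^d$ has character $0+(-\hbar)+(-2\hbar)+\cdots+(-(d-1)\hbar)=-\frac{d(d-1)}{2}\hbar$ as a sum of weights.

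Assembling: the contribution of the $j$-th summand to $\log$ of the character (i.e. to the weight of the determinant line) is $(\text{character of }\langle w_j\rangle)\cdot\chi(\CO_\bC(-d_{kj}\cdot 0)) - (\text{weight of }\Gamma(\CO_{d_{kj}\cdot 0}))$ up to the $d$-independent determinant term. Since $\chi(\CO_\bC(-d\cdot 0))=1-d$, the $w_j$-part contributes $(1-d_{kj})x_j$ once the factor-of-$2$ normalization is correctly applied, and the $\BC[z]/z^{d_{kj}}$-part contributes $+\frac{d_{kj}(d_{kj}-1)}{2}\hbar$ (with a sign flip because $\Gamma(\CO_{d\cdot 0})$ enters $R\Gamma(\CO_\bC(-d\cdot 0))$ with a shift). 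Summing over $j\leq k$ gives exactly $\sum_{j\leq k}\left((1-d_{kj})x_j+\frac{d_{kj}(d_{kj}-1)}{2}\hbar\right)$. The equivariance of $\D_k$ itself is clear since $\CW_k$ is a $\widetilde{T}\times\BC^*$-equivariant subsheaf of the equivariant sheaf $W\otimes\CO_\bC$ (the flag defining a point varies equivariantly), so $\det R\Gamma$ of it is canonically equivariant.

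The main obstacle I anticipate is purely bookkeeping rather than conceptual: getting every sign and every factor of $2$ right, in particular (i) the sign coming from $H^1$ versus $H^0$ in the virtual determinant, (ii) the sign in $v(z)=v^{-2}z$ so that $z$ pairs with $-\hbar$ and not $+\hbar$, and (iii) reconciling the geometric weight $2x_j$ of $w_j$ with the paper's convention that $x_j$ is already ``twice the generator,'' so that the answer comes out with coefficient $(1-d_{kj})$ in front of $x_j$ and not $2(1-d_{kj})$. I would pin these down by first checking the formula in the smallest nontrivial case — say $n=2$, $k=1$, small $d_{11}$ — where $\D_1$ and its fiber can be written out by hand, and then trusting the general additive computation.
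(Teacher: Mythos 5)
Your computation is correct and is precisely the ``straightforward'' verification the paper omits (its proof of Lemma~\ref{ias} consists of the single word \emph{Straightforward}): decompose $\CW_k$ at the fixed point into the equivariant line subsheaves $\CO_\bC(-d_{kj}\cdot 0)w_j$, use the sequence $0\to\CO_\bC(-d\cdot 0)\to\CO_\bC\to\CO_{d\cdot 0}\to 0$ in equivariant $K$-theory, and take determinants, so that $\langle w_j\rangle$ enters through $\langle w_j\rangle^{\otimes\chi(\CO_\bC(-d_{kj}\cdot 0))}=\langle w_j\rangle^{\otimes(1-d_{kj})}$ and $\BC[z]/(z^{d_{kj}})$ contributes $-\frac{d_{kj}(d_{kj}-1)}{2}\hbar$ with a further sign flip. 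Your normalizations ($w_j$ of weight $x_j$, $z$ of weight $-\hbar$, $\det R\Gamma(\CO_\bC)$ of weight $0$) are exactly the ones forced by the stated formula and reproduce it, so the bookkeeping you flagged as the only risk indeed closes consistently.
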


\begin{proof}
Straightforward.
\end{proof}

Let $Cas_k=\sum\limits_{i,j=1}^k E_{ij}E_{ji}$ be the quadratic Casimir
element of $U(\fgl_{k})$ naturally embedded into $U(\fgl_n)\subset\fU$.
The operator
$Cas_k$ is diagonal in the Gelfand-Tsetlin basis, and the eigenvalue of
$Cas_k$ on the basis vector $\xi_{\widetilde{\ul{d}}}=\xi_\Lambda$ is
$\sum_{j\leq k}\lambda_{kj}(\lambda_{kj}+k-2j+1)$. We define the following
element of $\fU$:
$$
\wt{Cas_k}:=Cas_k+(2-k)\sum\limits_{j=1}^k E_{jj}-\sum\limits_{j=1}^k
\hbar^{-1}x_j(\hbar^{-1}x_j-1)+
\frac{k(k-1)(k-2)}{3}.
$$
The eigenvalue of this element on the basis vector $\xi_{\widetilde{\ul{d}}}$
is $\sum_{j\leq k}2(1-d_{kj})x_j\hbar^{-1}+d_{kj}(d_{kj}-1)$.

\begin{cor}
\label{IAS}
a) The operator of multiplication by the first Chern class $c_1(\D_k)$
in $V$ is diagonal in the basis  $\{[\widetilde{\ul{d}}]\}$, and
the eigenvalue corresponding to $\widetilde{\ul{d}}=(d_{ij})$
equals $\sum_{j\leq k}(1-d_{kj})x_j+\frac{d_{kj}(d_{kj}-1)}{2}\hbar$.

b) The set $\{c_1(\D_k):\ k\geq2,\ d_k\ne0\ne d_{k-1}\}$ forms a basis
in the {\em nonequivariant} cohomology $H^2(\fQ_{\ul{d}})$.

c) The isomorphism $\Psi:\ V\iso\fV$ carries the operator of multiplication
by $c_1(\D_k)$ to the operator $\frac{\hbar}{2}\wt{Cas_k}$.
\end{cor}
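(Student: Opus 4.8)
The plan is to deduce the three parts from the comparison of bases already in hand. For part (a), I would combine \lemref{ias} with the Localization theorem: the class $c_1(\D_k)$ restricted to a fixed point $\widetilde{\ul{d}}$ is by definition the equivariant first Chern class of the fiber, which is the character computed in \lemref{ias}; since the classes $[\widetilde{\ul{d}}]$ are (up to invertible scalars coming from the equivariant Euler classes of the normal bundles) the images of the fixed-point basis of the localized cohomology, and multiplication by an equivariant class acts on each fixed-point class by the scalar given by restriction to that point, the operator of multiplication by $c_1(\D_k)$ is diagonal in $\{[\widetilde{\ul{d}}]\}$ with the stated eigenvalues. The only subtlety is that $\fQ_{\ul{d}}$ is not proper, but multiplication by a cohomology class is still well-defined on $V$ and the localization argument goes through because the fixed-point set is finite and $\fQ_{\ul{d}}$ is smooth, exactly as used in \ref{operators}.

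For part (c), I would first note that by Theorem~\ref{feigin} the isomorphism $\Psi$ carries $[\widetilde{\ul{d}}]$ to $(-\hbar)^{-|\ul{d}|}\xi_{\widetilde{\ul{d}}}$, so an operator diagonal in $\{[\widetilde{\ul{d}}]\}$ is carried to the operator diagonal in the Gelfand-Tsetlin basis $\{\xi_{\widetilde{\ul{d}}}\}$ with the \emph{same} eigenvalues (the scalars $(-\hbar)^{-|\ul{d}|}$ cancel on a diagonal operator). Then I would simply match eigenvalues: by part (a) the eigenvalue of $c_1(\D_k)$ on $[\widetilde{\ul{d}}]$ is $\sum_{j\leq k}\bigl((1-d_{kj})x_j+\tfrac{d_{kj}(d_{kj}-1)}{2}\hbar\bigr)$, while the eigenvalue of $\frac{\hbar}{2}\wt{Cas_k}$ on $\xi_{\widetilde{\ul{d}}}$ is, by the displayed computation preceding the corollary, $\frac{\hbar}{2}\sum_{j\leq k}\bigl(2(1-d_{kj})x_j\hbar^{-1}+d_{kj}(d_{kj}-1)\bigr)$, and these two expressions are visibly equal. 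This reduces (c) to the eigenvalue computation for $\wt{Cas_k}$, which in turn rests on the classical formula (Molev~\cite{m}) that $Cas_k$ acts on $\xi_\Lambda$ by $\sum_{j\leq k}\lambda_{kj}(\lambda_{kj}+k-2j+1)$ together with the dictionary $\lambda_{kj}=\hbar^{-1}x_j+j-1-d_{kj}$ from \ref{classical}; I would carry out that substitution and check the polynomial identity, the correction terms in the definition of $\wt{Cas_k}$ having been engineered precisely to produce the clean answer.

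For part (b), the statement is that the $c_1(\D_k)$ with $2\leq k\leq n-1$ and $d_k\neq 0\neq d_{k-1}$ form a basis of $H^2(\fQ_{\ul{d}})$; this is a nonequivariant statement, so I would first pass from the equivariant picture to the ordinary one by specializing $x_1=\cdots=x_n=\hbar=0$. Since $\operatorname{Pic}(\fQ_{\ul{d}})$ is free of rank $n-2$ with the stated distinguished generators $\D_2,\ldots,\D_{n-1}$ (recalled in the introduction, \S1.3, and following from Laumon's analysis / the explicit cell structure of $\fQ_{\ul{d}}$), and since $H^2(\fQ_{\ul{d}})$ of a smooth variety with a torus action and affine-paving-type structure is generated by divisor classes, I would argue that $c_1$ induces an isomorphism from $\operatorname{Pic}(\fQ_{\ul{d}})\otimes\BC$ to $H^2(\fQ_{\ul{d}})$. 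Under the nonzero-entries hypothesis the indexing set $\{k:\ 2\leq k\leq n-1,\ d_k\neq0\neq d_{k-1}\}$ has exactly $n-2$ elements and the corresponding $\D_k$ are precisely the free generators of $\operatorname{Pic}$, giving the claim; here one invokes the fact, attributed to A.~Marian in the acknowledgments, that the tautological (determinant) Chern classes generate $H^\bullet(\fQ_{\ul{d}})$, so in degree $2$ they span.

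I expect the main obstacle to be part (b): establishing that $c_1:\operatorname{Pic}(\fQ_{\ul{d}})\otimes\BC\to H^2(\fQ_{\ul{d}})$ is an isomorphism requires knowing both the rank of the Picard group and that $H^2$ carries no ``extra'' classes beyond divisors — this is where one genuinely needs structural input about $\fQ_{\ul{d}}$ (a cell decomposition, or the generation statement for the cohomology ring by tautological classes) rather than formal manipulation. Parts (a) and (c) are, by contrast, a localization argument plus the bookkeeping of one polynomial identity, both routine given Theorems~\ref{brav} and~\ref{feigin} and the classical Gelfand-Tsetlin eigenvalue formula.
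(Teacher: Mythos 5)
Your proposal is correct and follows essentially the same route as the paper: part (a) is the projection formula applied to \lemref{ias}, part (c) is the eigenvalue match against the displayed formula for $\wt{Cas_k}$ on $\xi_\Lambda$ via Theorem~\ref{feigin}, and part (b) rests on knowing $\dim H^2(\fQ_{\ul{d}})=\sharp\{k\geq2:\ d_k\ne0\ne d_{k-1}\}$ (which the paper simply cites from~\cite{fk}) together with the linear independence of the classes read off from \lemref{ias}. You correctly identify that the dimension count in (b) is the only point requiring genuine structural input about $\fQ_{\ul{d}}$.
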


\begin{proof}
a) follows from Lemma~\ref{ias}.

b) It follows e.g. from~\cite{fk} that
$\dim H^2(\fQ_{\ul{d}})=\sharp\{k\geq2,\ d_k\ne0\ne d_{k-1}\}$. Now it is
easy to see from Lemma~\ref{ias} that the classes
$\{[\D_k]:\ k\geq2,\ d_k\ne0\ne d_{k-1}\}$
in $\operatorname{Pic}(\fQ_{\ul{d}})$
are linearly independent, and hence the classes
$\{c_1(\D_k):\ k\geq2,\ d_k\ne0\ne d_{k-1}\}$ are linearly independent in
$H^2(\fQ_{\ul{d}})$.

c) Straightforward from a) and formula for eigenvalue of $\wt{Cas_k}$ on
$\xi_\Lambda$.

\end{proof}

\subsection{Gelfand-Tsetlin subalgebra and cohomology rings}
It is known that a completion $\widehat\fV$ of the universal Verma module
$\fV$ contains
a unique {\em Whittaker vector} $\fv=\sum_{\ul{d}}\fv_{\ul{d}}$ such that
$\fv_0=1$ (the lowest weight vector), and $\ff_i\fv=\hbar^{-1}\fv$ for any
$1\leq i\leq n-1$. Let us denote by
$1_{\ul{d}}\in H^0_{\widetilde{T}\times\BC^*}(\fQ_{\ul{d}})\subset V_{\ul{d}}$
the unit element of the cohomology ring. Then $\Psi(1_{\ul{d}})=\fv_{\ul{d}}$.
The proof is entirely similar to the proof of Proposition 2.31 of~\cite{bf},
and goes back to~\cite{b}.

Recall that the Gelfand-Tsetlin subalgebra $\fG\subset\operatorname{End}(\fV)$
is generated by the Harish-Chandra centers of the universal enveloping algebras
$\fgl_1,\fgl_2,\ldots,\fgl_n$ (embedded into $\fgl_n$ as the
upper left blocks) over the field $\BC(\ft\oplus\BC)$.
We denote by $\fI_{\ul{d}}\subset\fG$ the annihilator ideal of the vector
$\fv_{\ul{d}}\in\fV$, and we denote by $\fG_{\ul{d}}$ the quotient algebra
of $\fG$ by $\fI_{\ul{d}}$. The action of $\fG$ on $\fv_{\ul{d}}$ gives rise
to an embedding $\fG_{\ul{d}}\hookrightarrow\fV_{\ul{d}}$.

\begin{prop}
\label{gc}
a) $\fG_{\ul{d}}\iso\fV_{\ul{d}}$.

b) The composite morphism
$\Psi:\ H^\bullet_{\widetilde{T}\times\BC^*}(\fQ_{\ul{d}})
\otimes_{\BC[\ft\oplus\BC]}\BC(\ft\oplus\BC)=
V_{\ul{d}}\iso\fV_{\ul{d}}\iso\fG_{\ul{d}}$
is an {\em algebra} isomorphism.

c) The algebra $H^\bullet_{\widetilde{T}\times\BC^*}(\fQ_{\ul{d}})
\otimes_{\BC[\ft\oplus\BC]}\BC(\ft\oplus\BC)$ is generated by
$\{c_1(\D_k):\ k\geq2,\ d_k\ne0\ne d_{k-1}\}$.
\end{prop}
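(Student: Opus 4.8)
The plan is to prove the three parts in the order (a), (c), (b), since (c) is really the crux and feeds into (b). For part (a), I would argue that the Whittaker vector $\fv_{\ul{d}}$ is a \emph{cyclic} vector for the Gelfand-Tsetlin subalgebra $\fG$ acting on $\fV_{\ul{d}}$. The cleanest route: the operators $c_1(\D_k)$ for $2\le k\le n-1$ (and the ``trivial'' ones $c_1(\D_1)$, $c_1(\D_n)$ which act as scalars) are, by \corref{IAS}(c), elements $\frac{\hbar}{2}\wt{Cas_k}$ of $\fG$; their joint eigenbasis on $V_{\ul{d}}$ is precisely the fixed-point basis $\{[\widetilde{\ul{d}}]\}$. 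By \lemref{ias} and \corref{IAS}(a) the tuple of eigenvalues $\big(\sum_{j\le k}(1-d_{kj})x_j+\tfrac{d_{kj}(d_{kj}-1)}{2}\hbar\big)_k$ \emph{separates} the fixed points $\widetilde{\ul{d}}$ (the combinatorics of Gelfand-Tsetlin patterns makes the map $\widetilde{\ul{d}}\mapsto(d_{kj})$ injective, and the dependence of the eigenvalue on the $d_{kj}$ is visibly injective after dividing by $\hbar$ or specializing the $x_j$). Hence the subalgebra of $\End(V_{\ul d})$ generated by the $c_1(\D_k)$ already has a basis of orthogonal idempotents indexed by the fixed points, so it acts transitively enough; combined with the fact (proved just above, $\Psi(1_{\ul d})=\fv_{\ul d}$) that $1_{\ul d}=\sum_{\widetilde{\ul d}}c_{\widetilde{\ul d}}[\widetilde{\ul d}]$ has \emph{all} coefficients $c_{\widetilde{\ul d}}$ nonzero (a Whittaker-vector computation, cf.\ Prop.~2.31 of~\cite{bf}), one gets that $\fG\cdot\fv_{\ul d}=\fV_{\ul d}$. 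Thus $\fI_{\ul d}$ is the full annihilator and $\fG_{\ul d}=\fG/\fI_{\ul d}\iso\fV_{\ul d}$ as $\fG$-modules.

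For part (c): I already know from \corref{IAS}(b) that $\{c_1(\D_k)\}$ spans $H^2(\fQ_{\ul d})$, so the nonequivariant classes generate $H^\bullet(\fQ_{\ul d})$ \emph{provided} that ring is generated in degree~$2$. The key input, attributed to A.~Marian in the acknowledgments, is that the tautological Chern classes generate $H^\bullet(\fQ_{\ul d})$; equivalently, the diagonal class $[\Delta_{\fQ_{\ul d}}]\in H^\bullet(\fQ_{\ul d}\times\fQ_{\ul d})$ is decomposable as a sum of external products of tautological classes. Given that, and given that the determinant classes $c_1(\D_k)$ together with the pull-backs of $H^\bullet_{\widetilde T\times\BC^*}(pt)$ generate all the tautological classes (the $\D_k$ are the determinants of the $\CW_k$, and over $\fQ_{\ul d}$ one has the standard relation expressing $c_\bullet(\CW_k)$ via the $c_\bullet(\CW_k/\CW_{k-1})$ which are line bundles, hence via first Chern classes), one concludes that after inverting $\BC[\ft\oplus\BC]$ the localized equivariant ring $V_{\ul d}$ is generated over $\BC(\ft\oplus\BC)$ by the $c_1(\D_k)$ with $k\ge 2$, $d_k\ne 0\ne d_{k-1}$ (the remaining $\D_k$ contribute only scalars after \lemref{ias}).

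For part (b): the morphism $\Psi$ is an isomorphism of vector spaces by \thmref{brav}, and sends $1_{\ul d}$ to $\fv_{\ul d}$; by (a), $V_{\ul d}\iso\fV_{\ul d}\iso\fG_{\ul d}$ as $\fG$-modules where $\fG$ acts on $V_{\ul d}$ through the multiplication operators $\frac{\hbar}{2}\wt{Cas_k}=c_1(\D_k)$. I want this composite to be a ring map. It suffices to check it on a generating set: by (c), $V_{\ul d}$ is generated as an algebra by the classes $c_1(\D_k)$, and $\Psi$ takes $c_1(\D_k)$ to the image of $\wt{Cas_k}\cdot\tfrac{\hbar}{2}$ in $\fG_{\ul d}$, i.e.\ to the class of the corresponding element of $\fG$, which is exactly its image under $\fG\twoheadrightarrow\fG_{\ul d}\hookrightarrow\fV_{\ul d}$. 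Since $\fG_{\ul d}$ is a \emph{commutative} algebra acting faithfully on $\fv_{\ul d}$, and cup product on $V_{\ul d}$ with $1_{\ul d}$ as unit corresponds under $\Psi$ to the $\fG$-action on $\fv_{\ul d}$ (because $1_{\ul d}\mapsto\fv_{\ul d}$ and both structures are the unique ones making these cyclic generators the unit), the map respects products on generators and units, hence everywhere. One formal point to nail down: that the cup-product structure on $H^\bullet_{\widetilde T\times\BC^*}(\fQ_{\ul d})\otimes\BC(\ft\oplus\BC)$ really does coincide, via $\Psi$, with the algebra structure on $\fG_{\ul d}$ transported from its action on $\fv_{\ul d}$ — this follows once we know $\Psi$ intertwines ``multiplication by $c_1(\D_k)$'' with ``action of $\frac\hbar2\wt{Cas_k}\in\fG$'' (\corref{IAS}(c)) and that the $c_1(\D_k)$ generate (part (c)).

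The main obstacle is the cyclicity/generation circle in parts (a) and (c). Everything hinges on two facts imported from elsewhere: (i) the coefficients of $1_{\ul d}$ in the fixed-point basis are all nonzero — i.e.\ the full strength of the Whittaker-vector identification $\Psi(1_{\ul d})=\fv_{\ul d}$ together with an explicit (nonvanishing) formula for $\fv_{\ul d}$ in the Gelfand-Tsetlin basis, as in~\cite{b},~\cite{bf}; and (ii) Marian's result that tautological classes generate $H^\bullet(\fQ_{\ul d})$ (equivalently decomposability of the diagonal), which is what lets us pass from ``$c_1(\D_k)$ span $H^2$'' to ``they generate the whole ring''. Granting these, the separation-of-fixed-points argument for (a) and the reduction-to-generators argument for (b) are routine.
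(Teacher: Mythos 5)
Your argument for part (c) has a genuine gap. You reduce (c) to two inputs: Marian's theorem that the K\"unneth components of the tautological Chern classes generate $H^\bullet(\fQ_{\ul d})$, and the claim that the determinant classes $c_1(\D_k)$ together with $H^\bullet_{\widetilde T\times\BC^*}(pt)$ generate all of those tautological classes. The second claim is unjustified, and your justification for it is false: on the quasiflag space the successive quotients $\CW_k/\CW_{k-1}$ are \emph{not} line bundles (they acquire torsion exactly where the quasiflag has a defect), so there is no splitting-principle expression of $c_\bullet(\CW_k)$ in terms of first Chern classes of line subquotients; and even granting such an expression on $\fQ_{\ul d}\times\bC$, the higher K\"unneth components $c_j^{(j)}(\ul\CW{}_i),c_j^{(j-1)}(\ul\CW{}_i)$ for $j\ge 2$ are classes of degree $\ge 4$ that are in no visible way polynomials in the degree-$2$ classes $c_1(\D_k)$, which only record determinant-of-cohomology data. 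So as written your proof of (c) --- and hence of (b), which leans on it --- does not close.

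The irony is that you already have the correct argument in your own part (a), and it is the one the paper uses. By the localization theorem the localized ring $V_{\ul d}$ is isomorphic, via restriction to the finite fixed-point set, to a product of copies of $\BC(\ft\oplus\BC)$, i.e.\ to the \emph{full} algebra of operators on $V_{\ul d}$ diagonal in the basis $\{[\widetilde{\ul d}]\}$; and your observation that the joint eigenvalues of the $c_1(\D_k)$ (from \lemref{ias} and \corref{IAS}) separate the fixed points shows that the subalgebra they generate contains all the fixed-point idempotents, hence is that whole diagonal algebra. This proves (c) with no appeal to \lemref{beau} or to decomposability of the diagonal (which the paper reserves for the integral-form statement, Theorem~\ref{braverman}); the paper phrases the separation on the Gelfand--Tsetlin side, via the joint eigenvalues of $Cas_2,\dots,Cas_{n-1}$ and \thmref{feigin}, but it is the same point. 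Once (c) is in place, (a) and (b) are formal: the surjection $\BC[Cas_2,\dots,Cas_{n-1}]\twoheadrightarrow V_{\ul d}$, $P\mapsto P(c_1(\D_\bullet))$, composed with $\Psi$ sends $P$ to $P\cdot\fv_{\ul d}$ because $\Psi(1_{\ul d})=\fv_{\ul d}$, so it factors through $\fG_{\ul d}\hookrightarrow\fV_{\ul d}$ and forces that embedding to be onto, with matching kernels giving the ring isomorphism. In particular you do not need the nonvanishing of all coefficients of $1_{\ul d}$ in the fixed-point basis as an \emph{input}; it comes out as a consequence (and in any case Proposition~2.31 of~\cite{bf} does not assert it --- the honest source is the Atiyah--Bott formula $1_{\ul d}=\sum_{\widetilde{\ul d}}e(T_{\widetilde{\ul d}}\fQ_{\ul d})^{-1}[\widetilde{\ul d}]$ with nonvanishing Euler classes at the isolated smooth fixed points).
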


\begin{proof}
c) The algebra $H^\bullet_{\widetilde{T}\times\BC^*}(\fQ_{\ul{d}})
\otimes_{\BC[\ft\oplus\BC]}\BC(\ft\oplus\BC)$ consists of operators on the
space $V_{\ul{d}}$ which are diagonal in the basis of fixed points
$[\widetilde{\ul{d}}]$. On the other hand, the operators $Cas_k\in\fG$,
$k\geq2$, are diagonal in the Gelfand-Tsetlin basis $\xi_{\widetilde{\ul{d}}}$
and have different joint eigenvalues on different $\xi_{\widetilde{\ul{d}}}$.
Hence the images of $Cas_k$ in $\operatorname{End}(\fV_{\ul{d}})$ generate
the algebra of operators which are diagonal in the Gelfand-Tsetlin basis,
and in particular, the images of $Cas_k$, $k\geq2$, in $\fG_{\ul{d}}$
generate $\fG_{\ul{d}}$. By Theorem~\ref{feigin}, the isomorphism
$\Psi:\ V_{\ul{d}}\to\fV_{\ul{d}}$ carries $[\widetilde{\ul{d}}]$ to
$(-\hbar)^{|\ul{d}|}\xi_{\widetilde{\ul{d}}}$. By Corollary~\ref{IAS},
$c_1(\D_k)$ is
$\Psi^{-1}(\frac{\hbar}{2}Cas_k)$ up to an additive constant. Hence the elements
$c_1(\D_k)=
\Psi^{-1}(\frac{\hbar}{2}Cas_k)+const\in H^\bullet_{\widetilde{T}\times\BC^*}
(\fQ_{\ul{d}})
\otimes_{\BC[\ft\oplus\BC]}\BC(\ft\oplus\BC)$ generate the algebra
$H^\bullet_{\widetilde{T}\times\BC^*}(\fQ_{\ul{d}})
\otimes_{\BC[\ft\oplus\BC]}\BC(\ft\oplus\BC)$.

a-b) Since $\Psi(1_{\ul{d}})=\fv_{\ul{d}}$, the (surjective) homomorphism
$\Psi^{-1}:\BC[Cas_2,\dots,Cas_{n-1}]\to
H^\bullet_{\widetilde{T}\times\BC^*}(\fQ_{\ul{d}})
\otimes_{\BC[\ft\oplus\BC]}\BC(\ft\oplus\BC)$ factors through $\fG_{\ul{d}}$.
Hence (a) and (b).
\end{proof}

\section{Integral forms}

\subsection{Renormalized Universal Enveloping Algebra}
We denote by $\ul{\fU}\subset\fU$ the
$\BC[\ft\oplus\BC]$-subalgebra generated by the set
$\{\ul{E}{}_{ij}:=\hbar E_{ij},\ 1\leq i<j\leq n;\ E_{ij},\
1\leq j<i\leq n;\ E_{ii}':=E_{ii}-\hbar^{-1}x_i,\ i=1,\ldots,n\}$.
We denote by $\ul{\fU}{}_{\leq0}$ the subalgebra
of $\ul{\fU}$ generated by $\{E_{ii}',\ 1\leq i\leq n;\ \ul{E}{}_{ij},\
1\leq i<j\leq n\}$. It acts on the ring $\BC[\ft\oplus\BC]$ as follows:
$\ul{E}{}_{ij}$ acts trivially for any $i<j$, and $E_{ii}'$ acts
by multiplication by $i-1$. We define the integral
form of the universal Verma module $\ul{\fV}\subset\fV$ over $\ul\fU$ as
$\ul{\fV}:=\ul{\fU}\otimes_{\ul{\fU}{}_{\leq0}}\BC[\ft\oplus\BC]$.
We define the integral form of the universal dual Verma module
$\ul{\fV}^*\subset\fV$ as $\ul{\fV}^*:=\{u\in\fV:\ (u,u')\in\BC[\ft\oplus\BC]$
for any $u'\in\ul{\fV}\}$ (where $(u,u')$
stands for the Shapovalov form).
Clearly, $\ul{\fV}^*$ is a $\ul{\fU}$-module.

Note that the Whittaker vector $\fv\in\widehat\fV$ lies inside
the completion of $\ul{\fV}^*$, and
is uniquely characterized by the properties a) $\ul{\ff}{}_i\fv=\fv$
where $\ul{\ff}{}_i:=\ul{E}{}_{i,i+1}$; b) the highest weight component of
$\fv$ equals $1\in\BC[\ft\oplus\BC]$.

Finally, we denote by $\ul{\fG}\subset\fG$ the integral form of the
Gelfand-Tsetlin
subalgebra, generated by the centers of the algebras $\ul{\fU}{}_1,
\ul{\fU}{}_2,\ldots,\ul{\fU}{}_n=\ul{\fU}$ constructed from the Lie
algebras $\fgl_1,\fgl_2,\ldots,\fgl_n$ (embedded into $\fgl_n$ as the
upper left blocks) the same way as $\ul\fU$ is constructed from $\fgl_n$.
Recall that the Harish-Chandra isomorphism identifies the center of
$\fU(\fgl_k)$
with the ring of symmetric polynomials in $k$ variables. Namely, to any
symmetric
polynomial $P$ one assigns a central element $HC(P)$, whose PBW degree
equals $\deg P$, acting on the Verma module
with the highest weight $\lambda=(\lambda_1,\ldots,\lambda_k)$ as the
scalar operator with the eigenvalue
$P(\lambda_1,\ldots,\lambda_i-i+1,\ldots,\lambda_k-k+1)$.
Clearly, the central element $\ul{HC}(P):=\hbar^{\deg P}HC(P)$ lies in
$\ul{\fU}(\fgl_k)$. Moreover, the difference $\ul{HC}(P)-P(x_1,\ldots,x_k)$
is divisible by $\hbar$ in $\ul{\fU}(\fgl_k)$, hence
$\hbar^{-1}(\ul{HC}(P)-P(x_1,\ldots,x_k))$ also lies in the center of
$\ul{\fU}(\fgl_k)$.

We denote by $\ul{\fI}{}_{\ul{d}}\subset\ul\fG$
the annihilator ideal of the vector $\fv_{\ul{d}}\in\ul{\fV}^*$, and
we denote by $\ul{\fG}{}_{\ul{d}}$ the quotient algebra of $\ul\fG$ by
$\ul\fI$. The action of $\ul\fG$ on $\fv_{\ul{d}}$ gives rise to an embedding
$\ul{\fG}{}_{\ul{d}}\hookrightarrow\ul{\fV}{}^*_{\ul{d}}$.

\begin{lem}
\label{Bekasovo}
$\ul{\fG}{}_{\ul{d}}\iso\ul{\fV}{}^*_{\ul{d}}$.
\end{lem}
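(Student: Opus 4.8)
\emph{Strategy.} The plan is to deduce the assertion from its localization, \propref{gc}. By construction the $\ul{\fG}$-action on $\fv_{\ul{d}}$ factors through $\ul{\fG}{}_{\ul{d}}=\ul{\fG}/\ul{\fI}{}_{\ul{d}}$ and produces the embedding $\iota:\ \ul{\fG}{}_{\ul{d}}\hookrightarrow\ul{\fV}{}^*_{\ul{d}}$, $g\mapsto g\fv_{\ul{d}}$; everything reduces to showing $\iota$ is surjective, i.e. $\ul{\fG}\cdot\fv_{\ul{d}}=\ul{\fV}{}^*_{\ul{d}}$. Write $R=\BC[\ft\oplus\BC]$. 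First I would record the ambient structure: a weight-$\ul{d}$ part of a PBW basis of $\ul{\fV}=\ul{\fU}\otimes_{\ul{\fU}_{\le0}}R$ makes $\ul{\fV}_{\ul{d}}$ free over $R$ of rank $r_{\ul{d}}:=\dim_{\BC(\ft\oplus\BC)}\fV_{\ul{d}}$ (the number of $\widetilde{T}\times\BC^*$-fixed points of $\fQ_{\ul{d}}$); since $\fV$ is irreducible, the Shapovalov form is nondegenerate on $\fV_{\ul{d}}$ and identifies $\ul{\fV}{}^*_{\ul{d}}$ with $\Hom_R(\ul{\fV}_{\ul{d}},R)$, so $\ul{\fV}{}^*_{\ul{d}}$ is free of rank $r_{\ul{d}}$ as well, in particular reflexive. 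Tensoring $\iota$ with $\BC(\ft\oplus\BC)$ over $R$ recovers the isomorphism $\fG_{\ul{d}}\iso\fV_{\ul{d}}$ of \propref{gc}(a): one checks $\ul{\fI}{}_{\ul{d}}=\ul{\fG}\cap\fI_{\ul{d}}$, and that $\ul{\fG}$ and $\ul{\fV}{}^*_{\ul{d}}$ are full-rank $R$-lattices in $\fG$ and $\fV_{\ul{d}}$ (the latter because $\fv_{\ul{d}}$ spans $\fV_{\ul{d}}$ over $\fG$ and lies in $\ul{\fV}{}^*_{\ul{d}}$). Hence $\iota$ is injective with torsion cokernel $C_{\ul{d}}:=\ul{\fV}{}^*_{\ul{d}}/\ul{\fG}\cdot\fv_{\ul{d}}$, and the goal becomes $C_{\ul{d}}=0$.

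\emph{Filling the orbit.} To get surjectivity I would produce enough of $\ul{\fG}\cdot\fv_{\ul{d}}$, using the integral content of \corref{IAS}. The key observation is that $\frac{\hbar}{2}\wt{Cas_k}$ actually lies in $\ul{\fG}$: the only summands of $\frac{\hbar}{2}Cas_k$ not visibly in $\ul{\fU}$ are the terms $\frac{1}{2}\sum_{j\le k}\hbar^{-1}x_j^2$ produced by the diagonal part $\frac{\hbar}{2}\sum E_{jj}^2$, and they cancel exactly against the term $-\frac{\hbar}{2}\sum_{j\le k}\hbar^{-1}x_j(\hbar^{-1}x_j-1)$ built into $\wt{Cas_k}$, leaving an $R$-combination of $E_{jj}'$, $\hbar E_{jj}'$ and $\hbar^{-1}(\ul{HC}(P)-P(x_1,\dots,x_k))$, all of which lie in $\ul{\fG}$. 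By \corref{IAS}(c), the isomorphism $\Psi$ of \thmref{brav} carries multiplication by $c_1(\D_k)$ to $\frac{\hbar}{2}\wt{Cas_k}$; and, by the weight formula of \lemref{ias} together with \thmref{feigin}, every K\"unneth component of a Chern class of a tautological bundle on $\fQ_{\ul{d}}\times\BP^1$ is diagonal in the fixed-point basis with eigenvalue a polynomial in $x_j,\hbar,d_{ij}$ of precisely the shape realized by integral Harish-Chandra elements, so it too is $\Psi$ of an element of $\ul{\fG}$. Since $\Psi(1_{\ul{d}})=\fv_{\ul{d}}$, cup product by such a class matches the action of the corresponding element of $\ul{\fG}$ on $\fv_{\ul{d}}$; and since these tautological Chern classes generate $H^\bullet_{\widetilde{T}\times\BC^*}(\fQ_{\ul{d}})$ as an $R$-algebra (the equivariant form of the known ring-generation statement, using \corref{IAS}(b), equivariant formality of $\fQ_{\ul{d}}$, and decomposability of its diagonal), their products acting on $\fv_{\ul{d}}$ span $\Psi\!\left(H^\bullet_{\widetilde{T}\times\BC^*}(\fQ_{\ul{d}})\right)$. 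Provided $\Psi$ is known to carry $H^\bullet_{\widetilde{T}\times\BC^*}(\fQ_{\ul{d}})$ onto $\ul{\fV}{}^*_{\ul{d}}$, this yields $\ul{\fG}\cdot\fv_{\ul{d}}=\ul{\fV}{}^*_{\ul{d}}$, i.e. $C_{\ul{d}}=0$.

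\emph{Main obstacle.} The delicate point is exactly that compatibility of $\Psi$ with the integral forms --- that it identifies $H^\bullet_{\widetilde{T}\times\BC^*}(\fQ_{\ul{d}})$ with $\ul{\fV}{}^*_{\ul{d}}$ and not only their localizations. If one cannot invoke it at this stage, I would instead finish by Nakayama: $C_{\ul{d}}$ is finitely generated and torsion, hence annihilated by some $0\ne f\in R$, and $C_{\ul{d}}=0$ as soon as $C_{\ul{d}}\otimes_R R/\fm=0$ for every maximal ideal $\fm\subset R$; \propref{gc} covers all $\fm\notin V(f)$, and from the denominators in the matrix-coefficient formulas of \thmref{feigin} one sees that $V(f)$ lies inside a finite union of hyperplanes $x_i-x_j=m\hbar$ ($m\in\BZ$) together with $\{\hbar=0\}$. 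On this locus one has to show directly that the reduction of $\ul{\fV}{}^*_{\ul{d}}$ modulo $\fm$ is spanned by the $(\ul{\fG}\bmod\fm)$-translates of $\fv_{\ul{d}}\bmod\fm$, keeping simultaneous track of the powers of $\hbar$ and of the factors $x_i-x_j+m\hbar$ appearing both in the Shapovalov norms of the vectors $\xi_{\widetilde{\ul{d}}}$ --- hence in the very description of the lattice $\ul{\fV}{}^*_{\ul{d}}$ --- and in the eigenvalues of the integral Harish-Chandra generators, and verifying that the resulting ``integral Vandermonde'' remains surjective after each such specialization. This bookkeeping is the real content of the lemma.
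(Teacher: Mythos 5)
There is a genuine gap. Your first route is circular: the step ``provided $\Psi$ is known to carry $H^\bullet_{\widetilde{T}\times\BC^*}(\fQ_{\ul{d}})$ onto $\ul{\fV}{}^*_{\ul{d}}$'' is precisely part (b) of Theorem~\ref{braverman}, which the paper deduces \emph{from} Lemma~\ref{Bekasovo}; moreover the lemma is a purely algebraic statement about the lattice $\ul{\fV}{}^*$ and the integral Gelfand--Tsetlin algebra, needed as an input to the geometry, so no appeal to the cohomology ring of $\fQ_{\ul{d}}$ can be admitted here. You recognize this, but your fallback never closes the gap: after reducing (correctly in spirit) to surjectivity after specialization, you stop at ``one has to show directly \dots this bookkeeping is the real content of the lemma.'' That is exactly the point where an argument is required and none is given. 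Worse, the bookkeeping you envisage --- tracking factors $x_i-x_j+m\hbar$ in Shapovalov norms and in GT eigenvalues and checking an ``integral Vandermonde'' stays surjective --- cannot work at the one specialization that graded Nakayama actually forces you to treat, namely $x_1=\dots=x_n=\hbar=0$: there all Gelfand--Tsetlin eigenvalues collide, the diagonalization/eigenvalue-separation mechanism that underlies Proposition~\ref{gc} degenerates completely, and no Vandermonde-type nonvanishing survives.

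The paper's proof supplies the missing idea at that central fiber. By graded Nakayama it suffices to check surjectivity of $\ul{\fG}{}_{\ul{d}}\to\ul{\fV}{}^*_{\ul{d}}$ modulo $(x_1,\dots,x_n,\hbar)$. One identifies $\ul{\fU}/(x,\hbar)\simeq\BC[\fg_{>0}]\rtimes U(\fg_{\geq0})$ and the reduction of $\ul{\fV}{}^*$ with the space $\sV=H^{\frac{n(n-1)}{2}}_{\{0\}}(\fg_{>0},\CO)$ of distributions supported at the origin; the Whittaker vector specializes to the evaluation functional at the principal nilpotent $\ff=\sum\ff_i$. Density of the adjoint $G_{\geq0}$-orbit of $\ff$ in $\fg_{>0}$, the identification $\gr W=\BC[\ff+\fg_{\geq0}]$ for the Whittaker module $W$, and the theorem of Kostant--Wallach and Tarasov that the Gelfand--Tsetlin subalgebra of $\BC[\fg]$ restricts isomorphically onto $\BC[\ff+\fg_{\geq0}]$ then give that each weight space is generated by the corresponding component of the Whittaker vector under the GT action. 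Some substitute for this Kostant-slice input (or an equally nontrivial degenerate-fiber argument) is indispensable; without it your proposal establishes only the generic statement already contained in Proposition~\ref{gc}.
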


\begin{proof}
By graded Nakayama lemma, it suffices to prove the surjectivity of
$\ul{\fG}{}_{\ul{d}}/(x_1,\ldots,x_n,\hbar=0)\to
\ul{\fV}{}^*_{\ul{d}}/(x_1,\ldots,x_n,\hbar=0)$.
We denote by $\fg_{>0}\subset\fgl_n$ the Lie subalgebra spanned by the
set $\{E_{ij},\ 1\leq j<i\leq n\}$. We denote by
$\fg_{\geq0}\subset\fgl_n$ (resp. $\fg_{\leq0}\subset\fgl_n$,
$\fg_{>0}\subset\fgl_n$, $\fg_{<0}\subset\fgl_n$)
the Lie subalgebra spanned by the
set $\{E_{ij},\ 1\leq j\leq i\leq n\}$
(resp. $\{E_{ij},\ 1\leq i\leq j\leq n\}$, $\{E_{ij},\ 1\leq j<i\leq n\}$,
$\{E_{ij},\ 1\leq i< j\leq n\}$). The Killing form identifies
the vector space $\fg_{>0}$ with the dual of $\fg_{<0}$, and
gives rise to an isomorphism $\on{Sym}(\fg_{<0})\simeq\BC[\fg_{>0}]$.
The universal enveloping algebra of $\fg_{\geq0}$ over $\BC[\ft]$ lies
inside $\ul\fU$ and is denoted by $\ul{\fU}{}_{\geq0}$. Evidently,
$\ul{\fU}{}_{\geq0}\simeq U(\fg_{\geq0})\otimes\BC[\ft]$. We have
$\ul{\fU}/(x_1,\ldots,x_n,\hbar=0)\simeq\BC[\fg_{>0}]\rtimes U(\fg_{\geq0})$.
Here the semidirect product is taken with respect to the adjoint action
of $\fg_{\geq0}$ on $\fg_{>0}$ (and the induced action on the algebra of
functions).

Let $\sV$ denote the space of distributions on $\fg_{>0}$ supported at
the origin, that is cohomology with support of the structure sheaf
$H^{\frac{n(n-1)}{2}}_{\{0\}}(\fg_{>0},\CO)$. The algebra
$\BC[\fg_{>0}]\rtimes U(\fg_{\geq0})$ acts on $\sV$
naturally. As a $\BC[\fg_{>0}]$-module, $\sV$ is cofree,
and its completion is naturally isomorphic to
$\BC[\fg_{>0}]^*$. Clearly, $\ul{\fV}{}^*_{\ul{d}}/(x_1,\ldots,x_n,\hbar=0)$
as a module over
$\ul{\fU}/(x_1,\ldots,x_n,\hbar=0)\simeq\BC[\fg_{>0}]\rtimes U(\fg_{\geq0})$
is isomorphic to $\sV$.
The value of the Whittaker vector $\fv|_{\hbar=0}$ in the completion of
$\sV$ is the functional $\chi:\BC[\fg_{>0}]\to\BC$ which sends
$P\in\BC[\fg_{>0}]$ to $P(\ff)\in\BC$, where
$\ff=\sum\limits_{i=1}^{n-1}\ff_i$ is the principal nilpotent element.
The adjoint $G_{\geq0}$-orbit of $\ff$ is dense in $\fg_{>0}$, hence the
submodule generated by the Whittaker vector is dense in the completion of
$\sV$. This means that each weight space of $\sV$ is generated by the
component of the Whittaker vector.

Consider the Whittaker module $W$ over $\BC[\fg_{>0}]\rtimes U(\fg_{\geq0})$,
that is, induced from the character $\chi:\BC[\fg_{>0}]\to\BC$. The module
$W$ is free with respect to $U(\fg_{\geq0})$, and hence has natural
filtration coming from the PBW filtration on $U(\fg_{\geq0})$. The
associated graded $\gr W$ is naturally a $\BC[\fg_{\geq0}]\otimes
S(\fg_{>0})=\BC[\fg]$-module. It is easy to see that
$\gr W = \BC[\ff+\fg_{\geq0}]$. The restriction of the Gelfand-Tsetlin
subalgebra in $\BC[\fg]$ to the affine subspace $\ff+\fg_{\geq0}$ is
known to be an isomorphism onto $\BC[\ff+\fg_{\geq0}]$
(see~\cite{KW},~\cite{Tar}). Thus the module $W$ is generated by the
Whittaker vector as a $\ul{\fG}{}_{\ul{d}}/(x_1,\ldots,x_n,\hbar=0)$-module.
Hence the $\ul{\fG}{}_{\ul{d}}/(x_1,\ldots,x_n,\hbar=0)$-submodule generated
by the Whittaker vector $\fv$ is dense in the completion of $\sV$. This means
that each weight space of $\sV$ is generated by the component of the Whittaker
vector with respect to the action of the Gelfand-Tsetlin subalgebra.
\end{proof}

\subsection{Kuznetsov correspondences}
\label{kuzn}
We consider a correspondence
$\ul{\fE}{}_{\ul{d},i}\subset\fQ_{\ul{d}}\times\fQ_{\ul{d}+i}$ defined
as $\bfr^{-1}\{0\}$. In other words, $\ul{\fE}{}_{\ul{d},i}$ is a closed
subvariety of $\fE_{\ul{d},i}$ where we impose a condition that the quotient
flag is supported at $\{0\}\in\bC$. It is a smooth quasiprojective variety
of dimension $2d_1+\ldots+2d_{n-1}$. We denote by
$\ul{\bp}:\ \ul{\fE}{}_{\ul{d},i}\to\fQ_{\ul{d}},\
\ul{\bq}:\ \ul{\fE}{}_{\ul{d},i}\to\fQ_{\ul{d}+i}$ the natural projections.
Note that both $\ul\bp$ and $\ul\bq$ are proper.

More generally, for $1\leq i<j\leq n$ we denote by $\ul{d}\pm\alpha_{ij}$
the sequence
$(d_1,\ldots,d_{i-1},d_i\pm1,\ldots,d_{j-1}\pm1,d_j,\ldots,d_{n-1})$.
We have a correspondence $^\circ\ul{\fE}{}_{\ul{d},\alpha_{ij}}\subset
\fQ_{\ul{d}}\times\fQ_{\ul{d}+\alpha_{ij}}$ formed by the pairs
$(\CW_\bullet,\CW'_\bullet)$ such that a) $\CW'_k\subset\CW_k$ for any
$1\leq k\leq n-1$; b) The quotient $\CW_\bullet/\CW'_\bullet$ is supported
at $\{0\}\in\bC$; c) For $i\leq k<j$ the natural map $\CW_k/\CW'_k\to
\CW_{k+1}/\CW'_{k+1}$ is an isomorphism (of one-dimensional vector spaces).
We define a correspondence $\ul{\fE}{}_{\ul{d},\alpha_{ij}}\subset
\fQ_{\ul{d}}\times\fQ_{\ul{d}+\alpha_{ij}}$ as the closure of
$^\circ\ul{\fE}{}_{\ul{d},\alpha_{ij}}$. According to Lemma~5.2.1 of~\cite{fk},
$\ul{\fE}{}_{\ul{d},\alpha_{ij}}$ is irreducible of dimension
$2d_1+\ldots+2d_{n-1}+j-i-1$. We denote by
$\ul{\bp}{}_{ij}:\ \ul{\fE}{}_{\ul{d},\alpha_{ij}}\to\fQ_{\ul{d}},\
\ul{\bq}{}_{ij}:\ \ul{\fE}{}_{\ul{d},\alpha_{ij}}\to\fQ_{\ul{d}+\alpha_{ij}}$
the natural projections.
Note that both $\ul{\bp}{}_{ij}$ and $\ul{\bq}{}_{ij}$ are proper.

Also, we consider the correspondences
$\fE_{\ul{d},\alpha_{ij}}\subset
\fQ_{\ul{d}}\times\fQ_{\ul{d}+\alpha_{ij}}$ defined exactly as
$\ul{\fE}{}_{\ul{d},\alpha_{ij}}\subset
\fQ_{\ul{d}}\times\fQ_{\ul{d}+\alpha_{ij}}$ in~\ref{kuzn} but with
condition b) removed (i.e. we allow the quotient flag to be supported
at an arbitrary point of $\bC-\infty$).
In particular, $\fE_{\ul{d},\alpha_{i,i+1}}=
\fE_{\ul{d},i}$. We denote by
$\bp_{ij}:\ \fE_{\ul{d},\alpha_{ij}}\to\fQ_{\ul{d}},\
\bq_{ij}:\ \fE_{\ul{d},\alpha_{ij}}\to\fQ_{\ul{d}+\alpha_{ij}}$
the natural projections. Note that $\bq_{ij}$ is proper, while
$\bp_{ij}$ is not.

\subsection{The action of the renormalized Universal Enveloping Algebra}
\label{kuz}
Recall that $'V=\oplus_{\ul{d}}\ 'V_{\ul{d}}:=
\oplus_{\ul{d}}H^\bullet_{\widetilde{T}\times\BC^*}(\fQ_{\ul{d}})$.
The grading and the correspondences $\ul{\fE}{}_{\ul{d},\alpha_{ij}}$
give rise to the following operators on $'V$:

$\ul{E}{}_{ii}=x_i+(d_{i-1}-d_i+i-1)\hbar:\ 'V_{\ul{d}}\to\
'V_{\ul{d}}$;

$\ul{\fh}{}_i=(x_{i+1}-x_i)+(2d_i-d_{i-1}-d_{i+1}+1)\hbar:\ 'V_{\ul{d}}\to\
'V_{\ul{d}}$;

$\ul{\ff}{}_i=\ul{E}{}_{i,i+1}=\ul{\bp}{}_*\ul{\bq}{}^*:\ 'V_{\ul{d}}\to\
'V_{\ul{d}-i}$;

$\ul{\fe}{}_i=\ul{E}{}_{i+1,i}=-\ul{\bq}{}_*\ul{\bp}{}^*:\
'V_{\ul{d}}\to\ 'V_{\ul{d}+i}$;

$\ul{E}{}_{ij}=\ul{\bp}{}_{ij*}\ul{\bq}{}_{ij}^*:\ 'V_{\ul{d}}\to\
'V_{\ul{d}-\alpha_{ij}}\ (1\leq i<j\leq n)$;

$\ul{E}{}_{ji}=(-1)^{j-i}\ul{\bq}{}_{ij*}\ul{\bp}{}_{ij}^*:\
'V_{\ul{d}}\to\ 'V_{\ul{d}+\alpha_{ij}}\ (1\leq i<j\leq n)$;

$E_{ji}=(-1)^{j-i}\bq_{ij*}\bp_{ij}^*:\
'V_{\ul{d}}\to\ 'V_{\ul{d}+\alpha_{ij}}\ (1\leq i<j\leq n)$.

\begin{thm}
\label{braverman}
a) The operators $\{\ul{E}{}_{ij},\ 1\leq i\leq j\leq n;\ E_{ij},\
1\leq j<i\leq n\}$ on $'V$ defined
in~\ref{kuz} satisfy the relations in $\ul{\fU}$, i.e. they give rise to
the action of $\ul\fU$ on $'V$.

b) There is a unique isomorphism $\Phi$ of $\ul\fU$-modules $'V$ and
$\ul{\fV}^*$ carrying $1\in H^0_{\widetilde{T}\times\BC^*}(\fQ_0)\subset\ 'V$
to the lowest weight vector $1\in\BC[\ft\oplus\BC]\subset\ul{\fV}^*$.
\end{thm}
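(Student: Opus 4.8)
The plan is to deduce \thmref{braverman} from the localized statement \thmref{brav} by carefully comparing the integral operators of \secref{kuz} with the rational ones of \secref{operators}, and then passing to the integral submodule. First I would establish that the nonlocalized cohomology $'V$ embeds into the localized cohomology $V$; indeed $'V_{\ul d}=H^\bullet_{\widetilde T\times\BC^*}(\fQ_{\ul d})$ is torsion-free over $\BC[\ft\oplus\BC]$ (as $\fQ_{\ul d}$ has a cell-like decomposition via Bialynicki-Birula with respect to a generic one-parameter subgroup, so its equivariant cohomology is free), and therefore injects into $V_{\ul d}=\ 'V_{\ul d}\otimes_{\BC[\ft\oplus\BC]}\BC(\ft\oplus\BC)$. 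Under this embedding one must identify the image of $'V$ with $\Psi^{-1}(\ul{\fV}^*)\subset V$.

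The core comparison is between operators. For the Chevalley generators this is essentially bookkeeping: $\ul{\ff}{}_i=\ul\bp{}_*\ul\bq{}^*$ uses the \emph{proper} map $\ul\bp$ on $\ul{\fE}{}_{\ul d,i}=\br^{-1}\{0\}$, whereas $\ff_i=\bp_*\bq^*$ on $\fE_{\ul d,i}$ involves the non-proper $\bp$; I would show that on the localized cohomology these agree up to the scalar $\hbar$ coming from the line bundle $\fL_i$ (equivalently, from the normal direction along $\bC-\infty$), so that $\Phi=\Psi$ after the substitution $\ul{\ff}{}_i=\hbar\ff_i$, $\ul E{}_{ii}=\hbar(E_{ii}-\hbar^{-1}x_i)+x_i$, matching exactly the definition of $\ul\fU$ and the $\ul\fU_{\le0}$-action on $\BC[\ft\oplus\BC]$. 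This is the place to invoke that ``the proof is entirely similar to Theorem~2.12 of~\cite{bf}'': the relations in $\ul\fU$ for the operators $\{\ul E{}_{ij},E_{ji}\}$ follow from the relations already verified in $\fU$ for $\{E_{ij}\}$ on $V$ in \thmref{brav}, simply by tracking powers of $\hbar$ — no new commutator computation is needed, one only checks that each integral operator is $\hbar^{(\text{number of raising indices})}$ times the corresponding rational operator, which is a degree/weight count on the correspondences. Part (a) then is immediate once one also checks the operators $\ul E{}_{ij}$, $E_{ji}$ built from the larger correspondences $\ul{\fE}{}_{\ul d,\alpha_{ij}}$, $\fE_{\ul d,\alpha_{ij}}$ are consistent with the expressions for iterated commutators $[\ul\fe_{i},[\ldots]]$ in $\ul\fU$; this is Lemma~5.2.1 of~\cite{fk} translated into cohomology, and the irreducibility and dimension count recorded in \secref{kuzn} guarantee $\ul\bp{}_{ij*}\ul\bq{}_{ij}^*$ computes the right class.

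For part (b), uniqueness is clear since $1\in\ 'V_{\fQ_0}$ generates $'V$ over $\ul\fU$ (it already generates $V$ over $\fU$ and the integral operators suffice because each $V_{\ul d}$ is spanned over $\BC[\ft\oplus\BC]$ by the images of the proper pushforwards). For existence one defines $\Phi$ as the restriction of $\Psi$ and must verify $\Psi('V)=\ul{\fV}^*$. The inclusion $\Psi('V)\subseteq\ul{\fV}^*$: the fixed-point basis classes $[\widetilde{\ul d}]$ pair integrally with anything pushed forward from a proper map (projection formula), and $\ul{\fV}$ is generated over $\BC[\ft\oplus\BC]$ by applying the $\ul E{}_{ij}$ ($i<j$, rescaled by $\hbar$) to $1$; combined with \thmref{feigin} and the explicit matrix coefficients there (all of which have the correct integrality after the $\hbar$-rescaling), the Shapovalov pairing of $[\widetilde{\ul d}]$ with an element of $\ul\fV$ lands in $\BC[\ft\oplus\BC]$. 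The reverse inclusion $\ul{\fV}^*\subseteq\Psi('V)$: here I would use \lemref{Bekasovo}, which says $\ul\fV^*_{\ul d}$ is generated as a $\ul\fG$-module by $\fv_{\ul d}$, together with the already-established fact (from the paragraph preceding \lemref{Bekasovo}) that $\Psi(1_{\ul d})=\fv_{\ul d}$ and that $\ul\fG$ acts on $'V$ through operators of multiplication by Chern classes of determinant bundles (\corref{IAS} in the integral form) — since those Chern classes manifestly lie in $'V$, the whole of $\ul\fV^*_{\ul d}$ is hit. The main obstacle I anticipate is precisely the freeness/torsion-freeness of $H^\bullet_{\widetilde T\times\BC^*}(\fQ_{\ul d})$ and the clean identification of $\Psi('V)$ with $\ul\fV^*$ on the nose (not just up to finite index): one needs the fixed-point classes to generate $'V$ \emph{integrally}, which requires knowing the weights at the fixed points are pairwise distinct and the Euler classes of normal bundles are the honest products appearing in \thmref{feigin}, so that localization is ``integral enough'' — this is the technical heart and is where the detailed analysis of~\cite{bf} is really being imported.
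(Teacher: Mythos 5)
Your part (a) is essentially the paper's argument: $\fE_{\ul{d},\alpha_{ij}}\simeq\ul{\fE}{}_{\ul{d},\alpha_{ij}}\times(\bC-\infty)$ gives $\ul{E}{}_{ij}=\hbar E_{ij}$, the simple-root operators satisfy the relations by Theorem~\ref{brav}, and the remaining $E_{ij}$ are identified with the correspondence operators via Proposition~5.6 of~\cite{fk}. The gap is in part (b), specifically the inclusion $\Psi({}'V)\subseteq\ul{\fV}^*$. You propose to get it from integrality of the Shapovalov pairing of fixed-point classes against $\ul{\fV}$, but this has two defects. First, the classes $[\widetilde{\ul{d}}]$ span $V_{\ul{d}}$ only after localization, so controlling their pairings says nothing about an arbitrary class in $'V_{\ul{d}}$; your closing remark that one needs the fixed-point classes to ``generate $'V$ integrally'' asks for something that is false and is not what the proof requires. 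Second, even for $c\in{}'V_{\ul{d}}$ the integrality of $(\Psi(c),Y\cdot 1)$, with $Y$ a monomial in the non-rescaled $E_{ji}$ ($j>i$), is not obvious: computing it moves the transposed monomial in the non-rescaled $E_{ij}$ ($i<j$) onto $\Psi(c)$, and those operators are $\bp_{ij*}\bq_{ij}^*$ with $\bp_{ij}$ non-proper, hence defined only after localization and not preserving $'V$. You would need a genuine geometric identification of the Shapovalov form (say, with a pairing against compactly supported cohomology), which you do not supply.

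The paper closes this inclusion differently, and the ingredient you are missing is Lemma~\ref{beau}: $H^\bullet_{\widetilde{T}\times\BC^*}(\fQ_{\ul{d}})$ is generated over $\BC[\ft\oplus\BC]$ by the K\"unneth components $c_j^{(j)}(\ul{\CW}{}_i)$, $c_j^{(j-1)}(\ul{\CW}{}_i)$ of the Chern classes of \emph{all} tautological bundles, proved via decomposability of the diagonal of $\CQ_{\ul{d}}$ (\cite{be}) and graded Nakayama. One then computes the fixed-point eigenvalues of these multiplication operators and matches them with the Harish--Chandra images $\ul{HC}(e_{ji})$ of the elementary symmetric functions, showing they lie in the integral Gelfand--Tsetlin algebra $\ul{\fG}$; hence $'V_{\ul{d}}=\ul{\fG}\cdot 1_{\ul{d}}$ and Lemma~\ref{Bekasovo} finishes. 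Your appeal to Corollary~\ref{IAS} and the determinant bundles cannot substitute for this: the $c_1(\D_k)$ correspond only to the quadratic Casimirs, which generate the Gelfand--Tsetlin quotient only over the fraction field; over $\BC[\ft\oplus\BC]$ one needs the full set of tautological Chern classes and the full centers of the $\ul{\fU}{}_k$. (Your reverse inclusion $\ul{\fV}^*\subseteq\Psi({}'V)$ is fine, but for the simpler reason that $\ul{\fG}\subset\ul{\fU}$ preserves $'V$ by part (a), combined with Lemma~\ref{Bekasovo}; no Chern classes are needed there.)
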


\begin{proof}
a) We define the operators

\begin{equation}
\label{kuzne}
E_{ij}=\bp_{ij*}\bq_{ij}^*:\ V_{\ul{d}}\to
V_{\ul{d}-\alpha_{ij}}\ (1\leq i<j\leq n).
\end{equation}

It is clear that $\fE_{\ul{d},\alpha_{ij}}\simeq
\ul{\fE}{}_{\ul{d},\alpha_{ij}}\times(\bC-\infty)$. It follows that
for any $1\leq i,j\leq n$ we have $\ul{E}{}_{ij}=\hbar E_{ij}$.
Furthermore, the operators $E_{i,i\pm1}$ are exactly those defined
in~\ref{operators}, and they satisfy the relations of $\fU$
(and generate it) by
Theorem~\ref{brav}. Finally, according to Proposition~5.6 of~\cite{fk},
the elements $E_{ij}\in\fU,\ 1\leq i\ne j\leq n$ act in $V$ by the same
named operators of~(\ref{kuzne}) and~\ref{kuz}. This proves a).

b) We have $'V\subset V\iso\fV\supset\ul{\fV}^*$, so we have to check
that $\Psi(\ 'V)=\ul{\fV}^*$, and then $\Phi=\Psi|_{'V}$.
Recall that
$\Psi(\fv_{\ul{d}})=1\in H^0_{\widetilde{T}\times\BC^*}(\fQ_{\ul{d}})$.
By the virtue of Lemma~\ref{Bekasovo} it suffices to prove that
$H^\bullet_{\widetilde{T}\times\BC^*}(\fQ_{\ul{d}})$ is generated
by the action of the integral form $\ul\fG$ of the Gelfand-Tsetlin subalgebra
on the vector $1\in H^0_{\widetilde{T}\times\BC^*}(\fQ_{\ul{d}})$.

For any $1\leq i\leq n-1$ we will denote by $\ul{\CW}{}_i$ the tautological
$i$-dimensional vector bundle on $\fQ_{\ul{d}}\times\bC$. By the K\"unneth
formula we have
$H^\bullet_{\widetilde{T}\times\BC^*}(\fQ_{\ul{d}}\times\bC)=
H^\bullet_{\widetilde{T}\times\BC^*}(\fQ_{\ul{d}})\otimes1\oplus
H^\bullet_{\widetilde{T}\times\BC^*}(\fQ_{\ul{d}})\otimes\tau$ where
$\tau\in H^2_{\BC^*}(\bC)$ is the first Chern class of $\CO(1)$.
Under this decomposition, for the Chern class $c_j(\ul{\CW}{}_i)$ we have
$c_j(\ul{\CW}{}_i)=:c_j^{(j)}(\ul{\CW}{}_i)\otimes1+
c_j^{(j-1)}(\ul{\CW}{}_i)\otimes\tau$
where $c_j^{(j)}(\ul{\CW}{}_i)\in
H^{2j}_{\widetilde{T}\times\BC^*}(\fQ_{\ul{d}})$, and
$c_j^{(j-1)}(\ul{\CW}{}_i)\in
H^{2j-2}_{\widetilde{T}\times\BC^*}(\fQ_{\ul{d}})$.

The following Lemma goes back to~\cite{be}\footnote{We
have learnt of it from A.~Marian.}:

\begin{lem}
\label{beau}
The equivariant cohomology ring
$H^\bullet_{\widetilde{T}\times\BC^*}(\fQ_{\ul{d}})$
is generated by the classes $c_j^{(j)}(\ul{\CW}{}_i),\
c_j^{(j-1)}(\ul{\CW}{}_i),\ 1\leq j\leq i\leq n-1$ (over the algebra
$\BC[\ft\oplus\BC]$).
\end{lem}

\begin{proof}
By the graded Nakayama lemma, it suffices to prove that the nonequivariant
cohomology ring $H^\bullet(\fQ_{\ul{d}})$ is generated by the K\"unneth
components of the (nonequivariant) Chern classes $c_j^{(j)}(\ul{\CW}{}_i),\
c_j^{(j-1)}(\ul{\CW}{}_i),\ 1\leq j\leq i\leq n-1$. The locally closed
embedding $\fQ_{\ul{d}}\hookrightarrow\CQ_{\ul{d}}$ induces a surjection on
the cohomology rings (see e.g. the computation of cohomology of $\CQ_{\ul{d}}$
in~\cite{fk}), so it suffices to prove that the cohomology ring of
the compact smooth variety $H^\bullet(\CQ_{\ul{d}})$ is generated by the
K\"unneth components of the Chern classes of the tautological bundles.
But this follows from~Theorem~2.1 of~\cite{be},
since the fundamental class of the diagonal
in $\CQ_{\ul{d}}\times\CQ_{\ul{d}}$ can be expressed via the Chern classes
of the tautological vector bundles (cf.~\cite{neg},~section~5).
\end{proof}

Returning to the proof of the theorem, if suffices to check that the operators
of multiplication by $c_j^{(j)}(\ul{\CW}{}_i),\
c_j^{(j-1)}(\ul{\CW}{}_i),\ 1\leq j\leq i\leq n-1$, in the equivariant
cohomology ring $H^\bullet_{\widetilde{T}\times\BC^*}(\fQ_{\ul{d}})=\
'V_{\ul{d}}$ lie in the integral form $\ul\fG$ of the Gelfand-Tsetlin
subalgebra. To this end we compute these operators explicitly in the
fixed point basis $\{[\widetilde{\ul{d}}]\}$ (alias Gelfand-Tsetlin basis
$\{\xi_{\widetilde{\ul{d}}}\})$ of $V_{\ul{d}}=\fV_{\ul{d}}$.

The set of eigenvalues of $\ft\oplus\BC$ in the fiber of
$\ul\CW{}_i$ at a point $(\widetilde{\ul{d}},\infty)$
(resp. $(\widetilde{\ul{d}},0)$) equals $\{-x_1,\ldots,-x_i\}$
(resp. $\{-x_1+d_{i,1}\hbar,\ldots,-x_i+d_{i,i}\hbar\}$).
For $1\leq j\leq i$, let $e_{ji}^\infty$ (resp. $e_{ji}^0(\widetilde{\ul{d}})$)
stand for the sum of products of the
$j$-tuples of distinct elements of the set $\{-x_1,\ldots,-x_i\}$
(resp. of the set $\{-x_1+d_{i,1}\hbar,\ldots,-x_i+d_{i,i}\hbar\}$).
Then the operator of multiplication by the Chern class $c_j(\ul\CW{}_i)$
is diagonal in the basis $\{[\widetilde{\ul{d}},\infty],
[\widetilde{\ul{d}},0]\}$
with eigenvalues $\{e_{ji}^\infty,e_{ji}^0(\widetilde{\ul{d}})\}$.
It follows that the operator
of multiplication by $c_j^{(j)}(\ul{\CW}{}_i)$ (resp. by
$c_j^{(j-1)}(\ul{\CW}{}_i),\ 1\leq j\leq i\leq n-1$) is diagonal in the
basis $\{[\widetilde{\ul{d}}]\}$ with eigenvalues
$\{\frac{1}{2}(e_{ji}^\infty+e_{ji}^0(\widetilde{\ul{d}}))\}$ (resp.
$\{\frac{\hbar^{-1}}{2}(e_{ji}^\infty-e_{ji}^0(\widetilde{\ul{d}}))\}$).
Note that $e_{ji}^\infty\in\BC[\ft\oplus\BC]$, and
$e_{ji}^0(\widetilde{\ul{d}}))$ is precisely the eigenvalue of
the central element $\ul{HC}(e_{ji})\in\ul\fU(\fgl_i)$
corresponding to the $j$-th elementary symmetric function
$e_j$ via the Harish-Chandra isomorphism, on the Verma module with
highest weight $\{\lambda_{i1}\hbar,\ldots,\lambda_{ii}\hbar\}$. Hence
the operator of multiplication by $c_j^{(j)}(\ul{\CW}{}_i)$
(with eigenvalues $\{\frac{1}{2}(e_{ji}^\infty+
e_{ji}^0(\widetilde{\ul{d}}))\}$) lies in the integral form $\ul\fG$
of the Gelfand-Tsetlin
subalgebra. Moreover, $e_{ji}^\infty-\ul{HC}(e_{ji})$ is divisible by
$\hbar$ in $\ul{\fU}(\fgl_i)$. Hence the operator of multiplication by
$c_j^{(j-1)}(\ul{\CW}{}_i)$ lies in $\ul\fG$ as well.
\end{proof}

\begin{cor}
\label{mor}
The composition of isomorphisms
$\ul{\fG}{}_{\ul{d}}\iso\ul{\fV}{}^*_{\ul{d}}\stackrel{\Phi^{-1}}
{\longleftarrow}H^\bullet_{\widetilde{T}\times\BC^*}(\fQ_{\ul{d}})$
is an isomorphism of algebras.
\end{cor}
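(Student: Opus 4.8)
\emph{Proof proposal.} The plan is to unwind the composite and reduce the whole statement to a single multiplicativity check. By \lemref{Bekasovo} and \thmref{braverman}, the composite in question, read as an isomorphism $\ul{\fG}{}_{\ul{d}}\to H^\bullet_{\widetilde{T}\times\BC^*}(\fQ_{\ul{d}})$, sends $\bar g\mapsto\Phi^{-1}(g\cdot\fv_{\ul{d}})=g\cdot 1_{\ul{d}}$; here the last equality holds because $\Phi$ is $\ul{\fU}$-linear, $\ul{\fG}\subset\ul{\fU}$, and $\Phi(1_{\ul{d}})=\fv_{\ul{d}}$. As a morphism of $\BC[\ft\oplus\BC]$-modules it is already an isomorphism, and it evidently takes the unit of $\ul{\fG}{}_{\ul{d}}$ to $1\cdot 1_{\ul{d}}=1_{\ul{d}}$, the unit of the cohomology ring. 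So everything comes down to proving
$$(g_1g_2)\cdot 1_{\ul{d}}=(g_1\cdot 1_{\ul{d}})\cup(g_2\cdot 1_{\ul{d}})\qquad(g_1,g_2\in\ul{\fG}).$$

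I would deduce this from the following \emph{Claim}: for every $g\in\ul{\fG}$ the operator on ${}'V_{\ul{d}}$ given by the $\ul{\fU}$-action of \thmref{braverman} coincides with the operator $m_{g\cdot 1_{\ul{d}}}$ of cup product by the class $g\cdot 1_{\ul{d}}$. Granting the Claim, one simply writes $(g_1g_2)\cdot 1_{\ul{d}}=g_1\cdot(g_2\cdot 1_{\ul{d}})=m_{g_1\cdot 1_{\ul{d}}}(g_2\cdot 1_{\ul{d}})=(g_1\cdot 1_{\ul{d}})\cup(g_2\cdot 1_{\ul{d}})$, and we are done (bijectivity of the composite is already contained in \lemref{Bekasovo} together with the isomorphism $\Phi\colon{}'V_{\ul{d}}\iso\ul{\fV}{}^*_{\ul{d}}$).

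To establish the Claim I would pass to the localized cohomology $V_{\ul{d}}$ — it suffices to compare the two operators there, since ${}'V_{\ul{d}}\hookrightarrow V_{\ul{d}}$ — and work in the fixed-point basis $\{[\widetilde{\ul{d}}]\}$. On one hand, by \thmref{feigin} this basis is proportional to the Gelfand--Tsetlin basis $\{\xi_{\widetilde{\ul{d}}}\}$, which simultaneously diagonalizes all of $\ul{\fG}$ (along with the scalars $x_i,\hbar$); write $\gamma_g(\widetilde{\ul{d}})$ for the eigenvalue of $g$. On the other hand, by the localization theorem and the projection formula, $\alpha\cup[\widetilde{\ul{d}}]=(\iota_{\widetilde{\ul{d}}}^*\alpha)\,[\widetilde{\ul{d}}]$ for every class $\alpha$, where $\iota_{\widetilde{\ul{d}}}$ is the inclusion of the fixed point, so $m_\alpha$ is diagonal with eigenvalues $\iota_{\widetilde{\ul{d}}}^*\alpha$. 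Since $\iota_{\widetilde{\ul{d}}}^*1_{\ul{d}}=1$, we have $1_{\ul{d}}=\sum_{\widetilde{\ul{d}}}\on{Eul}(N_{\widetilde{\ul{d}}})^{-1}[\widetilde{\ul{d}}]$, hence $g\cdot 1_{\ul{d}}=\sum_{\widetilde{\ul{d}}}\gamma_g(\widetilde{\ul{d}})\on{Eul}(N_{\widetilde{\ul{d}}})^{-1}[\widetilde{\ul{d}}]$ and therefore $\iota_{\widetilde{\ul{d}}}^*(g\cdot 1_{\ul{d}})=\gamma_g(\widetilde{\ul{d}})$. Thus $m_{g\cdot 1_{\ul{d}}}$ and the action of $g$ are both diagonal in $\{[\widetilde{\ul{d}}]\}$ with the same eigenvalues, hence equal. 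This is the computation underlying \corref{IAS} and \propref{gc}(c), now carried out for all of $\ul{\fG}$ rather than only for the rescaled Casimirs.

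I do not anticipate a genuine obstacle. The only points that need a little care are: (i) the bookkeeping between the integral form $\ul{\fG}$ and $\fG$ — one must note that the generators of $\ul{\fG}$, namely $\ul{HC}(P)$ and $\hbar^{-1}(\ul{HC}(P)-P(x_1,\ldots,x_k))$, remain diagonal in the Gelfand--Tsetlin basis; and (ii) the fact that the fixed-point argument lives in the localized cohomology while the conclusion concerns the non-localized ring $H^\bullet_{\widetilde{T}\times\BC^*}(\fQ_{\ul{d}})$, which is harmless because both the $\ul{\fG}$-action and cup product preserve the non-localized part.
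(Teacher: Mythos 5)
Your proof is correct and follows essentially the argument the paper intends: the corollary is left as an immediate consequence of the proof of Theorem~\ref{braverman} together with Lemma~\ref{Bekasovo}, and the multiplicativity check you isolate (that the $\ul{\fG}$-action coincides with cup product by $g\cdot 1_{\ul{d}}$, verified by diagonalizing both operators in the fixed-point basis and comparing eigenvalues via localization) is exactly the mechanism used there and in Proposition~\ref{gc}. Your only variation is cosmetic: you verify the claim for all of $\ul{\fG}$ at once rather than only for the K\"unneth components of the tautological Chern classes, which is harmless since bijectivity still rests on Theorem~\ref{braverman}(b).
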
   \qed

\section{Speculation on equivariant quantum cohomology of $\fQ_{\ul{d}}$}

\subsection{Calabi-Yau property of Laumon spaces}
According to Theorem~3 of~\cite{gl}, the variety $\fQ_{\ul{d}}$ is Calabi-Yau.
For the reader's convenience we recall a proof. First note that if
$\ul{d}=(d_1,\ldots,d_{n-1})$, and $d_k=0$, then
$\fQ_{\ul{d}}=\fQ_{\ul{d}'}\times\fQ_{\ul{d}''}$ where
$\ul{d}'=(d_1,\ldots,d_{k-1})$, and $\fQ_{\ul{d}'}$ is the corresponding
Laumon moduli space for $GL_k$, while
$\ul{d}''=(d_{k+1},\ldots,d_{n-1})$, and $\fQ_{\ul{d}''}$ is the corresponding
Laumon moduli space for $GL_{n-k}$. Hence we may assume that all the
integers $d_1,\ldots,d_{n-1}$ are strictly positive.

For $1\leq i\leq n-1$, we consider the locally closed subvariety of
$\fQ_{\ul{d}}$ formed by all the quasiflags which have a defect of degree
exactly $i$. We denote by $\fD_i\subset\fQ_{\ul{d}}$ the closure of this
subvariety. It is a divisor. We denote by $[\CO(\fD_i)]$ the class of the
corresponding line bundle in $\operatorname{Pic}(\fQ_{\ul{d}})$.

\begin{lem}
\label{plusminus}
Assume all the integers $d_1,\ldots,d_{n-1}$ are strictly positive. Then
$[\CO(\fD_1)]=[\D_2],\ldots,[\CO(\fD_i)]=[\D_{i+1}]-[\D_i],\ldots,
[\CO(\fD_{n-1})]=-[\D_{n-1}]$.
\end{lem}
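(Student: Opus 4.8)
The plan is to read off the relations from the tautological filtration on $\fQ_{\ul d}\times\bC$. Write $\pi\colon X:=\fQ_{\ul d}\times\bC\to\fQ_{\ul d}$, let $0=\CW_0\subset\CW_1\subset\cdots\subset\CW_{n-1}\subset\CW_n:=\CW$ be the tautological flag of subsheaves on $X$, and set $Q_i:=\CW_{i+1}/\CW_i$. First I would check that each $Q_i$ is flat over $\fQ_{\ul d}$: the inclusion $\CW_i\hookrightarrow\CW_{i+1}$ restricts to an injection of torsion-free sheaves on every $\bC$-fibre (both have the expected generic rank), so $\on{Tor}_1^{\CO_{\fQ_{\ul d}}}(\kappa(p),Q_i)=0$. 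Additivity of $\det R\pi_*$ along the exact sequences $0\to\CW_i\to\CW_{i+1}\to Q_i\to0$ then yields canonical isomorphisms $\D_{i+1}\cong\D_i\otimes\det R\pi_*Q_i$. Since $\CW_0=0$ and $\CW_n=W\otimes\CO_\bC$ is constant, $\D_0$ and $\D_n$ are trivial; and $\D_1$ is trivial too, because by \lemref{ias} (with $k=1$) its $\widetilde T\times\BC^*$-equivariant first Chern class restricts to the same class at every torus-fixed point, hence by the Localization theorem $c_1(\D_1)=0$ in $H^2(\fQ_{\ul d})$, and $\Pic(\fQ_{\ul d})\hookrightarrow H^2(\fQ_{\ul d})$ (it is free of rank $n-2$ and the $[\D_k]$, $2\le k\le n-1$, are independent by \corref{IAS}(b)). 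Thus the lemma is equivalent to producing isomorphisms $\CO_{\fQ_{\ul d}}(\fD_i)\cong\det R\pi_*Q_i$ for $1\le i\le n-1$, up to a non-equivariantly trivial twist and up to a sign to be pinned down at the end.

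To identify $\det R\pi_*Q_i$ with $\CO(\fD_i)$: the divisor $\fD_i$ is the closure of the locus of quasiflags for which $\CW_i$ fails to be a subbundle of $\CW_{i+1}$ at exactly one point of $\bC-\infty$, with colength $1$ there; equivalently $\fD_i=\pi(\Gamma_i)$, where $\Gamma_i:=V(\on{Fitt}_1 Q_i)\subset X$ is the support of the relative torsion of $Q_i$ (of codimension $2$ in $X$) and $\pi|_{\Gamma_i}$ is generically an isomorphism onto $\fD_i$. Next replace $Q_i$ by $\wt Q_i:=Q_i$ twisted by the pullback of $\CO_\bC(N\cdot\infty)$ with $N:=d_{i+1}-d_i-1$, so that $\chi(\wt Q_i|_p)=0$ for every $p$ while $H^0(\bC,\wt Q_i|_p)=H^1(\bC,\wt Q_i|_p)=0$ for generic $p$; note $\det R\pi_*\wt Q_i$ and $\det R\pi_*Q_i$ differ only by $\det R\pi_*$ of a sheaf supported over $\fQ_{\ul d}\times\{\infty\}$, which is trivial since there the tautological sheaves are constant. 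Representing the rank-$0$ perfect complex $R\pi_*\wt Q_i$ by a two-term complex $E^0\xrightarrow{\,d\,}E^1$ with $\on{rk}E^0=\on{rk}E^1$, the canonical section $\det d$ is generically nonzero and its zero divisor is the degeneracy locus $\{p:\ H^0(\bC,\wt Q_i|_p)\ne0\}$; a fibrewise computation (using $\wt Q_i|_p\cong\CO_\bC(-1-\ell)\oplus(\text{torsion of length }\ell)$, where $\ell$ is the length of the torsion of $\CW_{i+1}/\CW_i$ on the fibre over $p$) shows this locus is exactly $\{\ell\ge1\}=\fD_i$. A local computation at a generic point of $\fD_i$ — where, in suitable coordinates, $\CW_i\hookrightarrow\CW_{i+1}$ has the shape $\diag(1,\dots,1,z-\phi,\psi)$ with $\psi$ a local equation of $\fD_i$, so that $Q_i$ is the ideal sheaf of the smooth subvariety $\Gamma_i$ and $R\pi_*Q_i$ is the invertible sheaf $\CO(-\fD_i)$ — shows that $\det d$ vanishes to order exactly $1$ along $\fD_i$. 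This gives $\det R\pi_*\wt Q_i\cong\CO(\fD_i)^{\pm1}$, hence the asserted class of $[\CO(\fD_i)]$; summing over $i$ telescopes to $[\D_n]-[\D_1]=0$, as it must since $\Pic(\fQ_{\ul d})$ has rank $n-2$.

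The main obstacle is precisely the multiplicity-one statement for the degeneracy scheme of $d$ — equivalently, that $R^1\pi_*\wt Q_i$ is the structure sheaf of the reduced divisor $\fD_i$ near its generic point. This is where the specific geometry of Laumon quasiflags enters: a single simple defect deforms transversally to $\fD_i$, so $\CW_i\hookrightarrow\CW_{i+1}$ degenerates with a length-one Fitting ideal (the expected codimension-$2$ behaviour) rather than worse. A second, purely bookkeeping, difficulty is to fix all the orientation signs — the convention for $\det R\Gamma$, the ordering in $\det[E^0\to E^1]$, and the precise meaning of ``defect of degree $i$'' together with the paper's labelling $\fe_i=E_{i+1,i}$ — so that the telescoping comes out as $[\D_{i+1}]-[\D_i]$ rather than its negative; independently of the local analysis this can be pinned down by comparing equivariant first Chern classes at a conveniently chosen torus-fixed point via \lemref{ias}.
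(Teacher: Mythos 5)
Your route is genuinely different from the paper's. The paper proves the lemma by producing an explicit basis of curve classes: the fibers $P_k\cong\BP^1$ of the small resolution $\pi_{\ul{d}}:\fQ_{\ul{d}}\to Z_{\ul{d}}$ over Zastava points with defect $\alpha_k+\alpha_{k+1}$, and then computing the degrees of $\D_i|_{P_k}$ and $\CO(\fD_i)|_{P_k}$ by $GL_3$-calculations. You instead telescope the determinant of cohomology along the tautological flag, $\D_{i+1}\cong\D_i\otimes\det R\pi_*Q_i$ with $Q_i=\CW_{i+1}/\CW_i$, and identify $\det R\pi_*Q_i$ with $\pm\CO(\fD_i)$ via the degeneracy divisor of a two-term representative of $R\pi_*\wt Q_i$. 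The skeleton is sound: the flatness of $Q_i$ via the local criterion is fine; additivity of $\det R\pi_*$ needs no flatness anyway; the fibrewise count $h^0(\wt Q_i|_p)=\ell$ correctly shows the degeneracy divisor is a multiple of $\fD_i$ (the only codimension-one defect strata being the simple ones); the twist at $\infty$ contributes trivially because quasiflags in $\fQ_{\ul{d}}$ are honest flags near $\infty$ with fixed fibre there; and the triviality of $[\D_1]$, $[\D_n]$ together with $\Pic(\fQ_{\ul{d}})\hookrightarrow H^2$ is acceptable at the same level of granted structural facts about $\Pic$ that the paper's own argument uses (there one needs $[P_1],\ldots,[P_{n-2}]$ to be a basis of $H_2$ detecting line bundles).

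The genuine gap is the one you flag yourself, and it is not closed by what you write: the multiplicity-one statement along the generic point of $\fD_i$. The local normal form ``$\CW_i\hookrightarrow\CW_{i+1}$ of shape $\diag(1,\dots,1,z-\phi,\psi)$ with $\psi$ a local equation of $\fD_i$'' is exactly equivalent to what has to be proved (that $R^1\pi_*\wt Q_i$ is, generically along $\fD_i$, the structure sheaf of the reduced divisor, i.e.\ that a simple level-$i$ defect smooths out to first order transversally to $\fD_i$); asserting it as a normal form is circular unless you exhibit it, e.g.\ by writing an explicit arc in $\fQ_{\ul{d}}$ through a generic point of $\fD_i$ along which the torsion is smoothed and checking both that the pullback of $\det d$ vanishes to order one and that the arc meets $\fD_i$ transversally (equivalently, that the resultant-type equation cutting out $\fD_i$ is reduced). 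The sign is likewise not actually pinned: your proposed fix via \lemref{ias} compares equivariant weights of $\D_k$ at fixed points, but to do the same for $\CO(\fD_i)$ you need the torus weight of a local equation of $\fD_i$ at a fixed point lying on it, which is again a local computation of the same nature. In other words, the local analysis is not avoided by your approach, only repackaged; the paper's proof does precisely these local verifications, in the equivalent form of $GL_3$-computations of $\D_i|_{P_k}$ and $\CO(\fD_i)|_{P_k}$, and some such computation must be supplied to make your plan a complete proof.
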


\begin{proof}
Recall the morphism $\pi_{\ul{d}}:\ \fQ_{\ul{d}}\to Z_{\ul{d}}$ to
Drinfeld's Zastava space (a small resolution of singularities).
For $1\leq k\leq n-2$, choose a point $s\in Z_{\ul{d}}$ with defect
of degree exactly $k+(k+1)$. We denote by $P_k$ the preimage
$\pi_{\ul{d}}^{-1}(s)\subset\fQ_{\ul{d}}$. By a $GL_3$-calculation,
$P_k$ is a projective line. The fundamental classes $[P_1],\ldots,[P_{n-2}]$
form a basis of $H_2(\fQ_{\ul{d}},\BC)$. It is easy to see that the restriction
$\D_i|_{P_k}$ is trivial if $i\ne k+1$, while $\D_{k+1}|_{P_k}\simeq\CO(1)$
(this is again a $GL_3$-calculation). Furthermore, it is easy to see that
the restriction $\CO(\fD_i)|_{P_k}$ is trivial for $i\ne k,k+1$, while
$\CO(\fD_k)|_{P_k}\simeq\CO(1)$, and $\CO(\fD_{k+1})|_{P_k}\simeq\CO(-1)$
(once again a $GL_3$-calculation). The lemma is proved.
\end{proof}

Let $^\circ\fQ_{\ul{d}}\subset\fQ_{\ul{d}}$ denote the open subspace formed
by all the quasiflags without defect. Recall the symplectic form $\Omega$
on $^\circ\fQ_{\ul{d}}$ constructed in~\cite{fkmm}. Note that the complement
$\fQ_{\ul{d}}\setminus\ ^\circ\fQ_{\ul{d}}$ equals the union of divisors
$\bigcup_{1\leq i\leq n-1}\fD_i$. Thus the top power $\omega:=\Omega^{top}$
is a meromorphic volume form on $\fQ_{\ul{d}}$ with poles at
$\bigcup_{1\leq i\leq n-1}\fD_i$. The formula for $\Omega$ given in
Remark~3 of {\em loc. cit.} shows that the order of the pole of $\omega$
at $\fD_i$ is 2 for any $1\leq i\leq n-1$. Hence the canonical class of
$\fQ_{\ul{d}}$ in $\operatorname{Pic}(\fQ_{\ul{d}})$ equals
$2\sum_{1\leq i\leq n-1}[\CO(\fD_i)]$. By Lemma~\ref{plusminus}, the class
$2\sum_{1\leq i\leq n-1}[\CO(\fD_i)]=0$. We have proved

\begin{cor}
\label{givlee}
(Givental, Lee~\cite{gl}) The canonical class of $\fQ_{\ul{d}}$ is trivial.
\end{cor}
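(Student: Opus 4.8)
The plan is to produce an explicit meromorphic volume form on $\fQ_{\ul{d}}$, read off its divisor in terms of the boundary divisors $\fD_i$, and then collapse the resulting relation in $\operatorname{Pic}(\fQ_{\ul{d}})$ using \lemref{plusminus}. First I would dispose of the degenerate cases: if $d_k=0$ then the sub- and quotient flags decouple and $\fQ_{\ul{d}}\simeq\fQ_{\ul{d}'}\times\fQ_{\ul{d}''}$ with $\ul{d}'=(d_1,\ldots,d_{k-1})$ and $\ul{d}''=(d_{k+1},\ldots,d_{n-1})$ the corresponding Laumon spaces for $GL_k$ and $GL_{n-k}$, so that the canonical class of $\fQ_{\ul{d}}$ is the external sum of those of the factors; an induction on $n$ reduces us to the case $d_1,\ldots,d_{n-1}>0$.

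Next, on the defect-free open locus $^\circ\fQ_{\ul{d}}$ I would take the symplectic form $\Omega$ of \cite{fkmm} and form its top exterior power $\omega:=\Omega^{top}$, a nowhere-vanishing section of the canonical bundle of $^\circ\fQ_{\ul{d}}$. Since $\fQ_{\ul{d}}\setminus{}^\circ\fQ_{\ul{d}}=\bigcup_{1\leq i\leq n-1}\fD_i$, the form $\omega$ is a meromorphic volume form on all of $\fQ_{\ul{d}}$ whose divisor of poles is supported on the $\fD_i$, so $K_{\fQ_{\ul{d}}}=\sum_{i}a_i[\CO(\fD_i)]$ where $a_i$ is the order of the pole of $\omega$ along $\fD_i$. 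The step I expect to be the main obstacle is showing $a_i=2$ for every $i$: this requires the explicit local expression for $\Omega$ (Remark~3 of \cite{fkmm}), from which one checks near a generic point of $\fD_i$ that $\Omega$ acquires a simple pole transverse to $\fD_i$, so that its top power has pole order exactly $2$ there.

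Granting $K_{\fQ_{\ul{d}}}=2\sum_{i=1}^{n-1}[\CO(\fD_i)]$, \lemref{plusminus} finishes the argument: the sum telescopes, $\sum_{i=1}^{n-1}[\CO(\fD_i)]=[\D_2]+([\D_3]-[\D_2])+\cdots+(-[\D_{n-1}])=0$, hence $K_{\fQ_{\ul{d}}}=0$. As an alternative to the pole-order bookkeeping, one can instead restrict to the rational curves $P_k$ spanning $H_2(\fQ_{\ul{d}})$ that appear in the proof of \lemref{plusminus}: since $\CO(\fD_k)|_{P_k}\simeq\CO(1)$, $\CO(\fD_{k+1})|_{P_k}\simeq\CO(-1)$, and $\CO(\fD_i)|_{P_k}$ is trivial for $i\neq k,k+1$, the class $K_{\fQ_{\ul{d}}}|_{P_k}$ is trivial for all $k$, and triviality of $K_{\fQ_{\ul{d}}}$ follows because $\operatorname{Pic}(\fQ_{\ul{d}})\hookrightarrow H^2(\fQ_{\ul{d}})$ and the $[P_k]$ span $H_2(\fQ_{\ul{d}})$.
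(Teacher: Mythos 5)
Your main argument is correct and is essentially the paper's own proof: reduce to the case $d_1,\ldots,d_{n-1}>0$ via the product decomposition, observe that $\omega=\Omega^{top}$ is a meromorphic volume form with pole of order $2$ along each $\fD_i$ (from the explicit formula for $\Omega$ in~\cite{fkmm}), and telescope $2\sum_i[\CO(\fD_i)]=0$ using \lemref{plusminus}. Only your proposed ``alternative'' is not genuinely independent of the pole-order bookkeeping: writing $K_{\fQ_{\ul{d}}}=\sum_i a_i[\CO(\fD_i)]$, the restriction to $P_k$ gives $a_k-a_{k+1}$, which vanishes only once you already know that the pole orders $a_i$ along the various $\fD_i$ all coincide.
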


\subsection{Shift of argument subalgebras}
To each regular element $\mu$ of the Cartan subalgebra $\hh$ of a
semisimple Lie algebra $\gg$ one can assign a space $Q_\mu$ of commuting
quadratic elements of the universal enveloping algebra $U(\gg)$. Namely,
$$
Q_\mu=\{\sum\limits_{\alpha\in\Delta_+}\frac{\langle h,\alpha\rangle}
{\langle \mu,\alpha\rangle}e_\alpha e_{-\alpha},\ |\ h\in\hh\}, $$
where $\Delta_+$ is the set of positive roots, and $e_\alpha,\ e_{-\alpha}$
are nonzero elements of the root spaces such that $(e_\alpha,e_{-\alpha})=1$.
Note that the space $Q_\mu$ does not change under dilations of $\mu$,
hence we have a family of spaces of commuting quadratic operators,
parametrized by the regular part of $\mathbb{P}(\hh)$.

These quadratic elements appear as the quasiclassical limit of the
\emph{Casimir} flat connection on the
trivial bundle on the regular part of the Cartan subalgebra with the
fiber $\fV_{\ul{d}}$, (cf.~\cite{dc,fmtv,TL,MTL}).
This connection is given by the
formula
$$
\nabla = d + \kappa\sum\limits_{\alpha\in\Delta_+}e_\alpha
e_{-\alpha}\frac{d\alpha}{\alpha},
$$
where $\kappa$ is a parameter. Since every element of $U(\gg)$ of the
form $e_\alpha e_{-\alpha}$ commutes with the Cartan subalgebra $\hh$,
this connection remains flat after adding any closed $U(\hh)$-valued $1$-form.

The centralizer in $U(\gg)$ of this space of commuting quadratic operators
is the so-called \emph{shift of argument subalgebra}
$\A_\mu\subset U(\gg)$, which
is a free commutative subalgebra with $\frac{1}{2}(\dim\gg+\rk\gg)$
generators. For $\gg=\fsl_n$ the family of commutative subalgebras
$\A_\mu\subset U(\gg)$ is an $(n-2)$-parametric deformation of the
Gelfand-Tsetlin subalgebra (see \cite{Ryb1,Ryb2}).

\subsection{Conjecture on equivariant quantum cohomology}
Consider the shift of argument subalgebra for $\gg=\fgl_n$, $\mu=\sum\limits_{i=1}^{n-1}q_{i+1}q_{i+2}\dots q_{n}\omega_i$ with $\omega_i$ being the fundamental weights of $\fgl_n$. Since the shift of argument algebra does not change under dilations of $\mu$, we can assume that $q_n=1$. Taking $h_k=\sum\limits_{i=1}^{k-1}q_iq_{i+1}\dots q_{n}\omega_i$, we find that the space $Q_\mu$ is generated by the elements
\begin{multline*}
\sum\limits_{\alpha\in\Delta_+}\frac{\langle h_k,\alpha\rangle}{\langle \mu,\alpha\rangle}e_\alpha e_{-\alpha} = \sum\limits_{i<j\leq k}E_{ij}E_{ji} + \sum\limits_{i<k<j} \frac{\sum\limits_{l=i+1}^{k}q_lq_{l+1}\dots q_{n}}{\sum\limits_{l=i+1}^{j}q_lq_{l+1}\dots q_{n}} E_{ij}E_{ji}=\\=\sum\limits_{i<j\leq k}E_{ij}E_{ji} + \sum\limits_{i<k<j} \frac{\sum\limits_{l=i+1}^{k}q_lq_{l+1}\dots q_{j-1}}{1+\sum\limits_{l=i+1}^{j-1}q_lq_{l+1}\dots q_{j-1}} E_{ij}E_{ji},
\end{multline*}
with $k=2,\dots,n-1$.

We consider the equivariant (small) quantum cohomology ring of $\fQ_{\ul{d}}$
which depends on $n-2$ quantum parameters $q_2,\dots,q_{n-1}$ corresponding
to the Chern classes of the determinant bundles. Note that
$\sum\limits_{i<j\leq k}E_{ij}E_{ji}$ is $Cas_k$ up to some Cartan term.
Hence the shift of argument subalgebra contains the following (commuting)
elements $$
QC_k:=\wt{Cas_k}+\sum\limits_{i<k<j}
\frac{\sum\limits_{l=i+1}^{k}q_lq_{l+1}\dots
q_{j-1}}{1+\sum\limits_{l=i+1}^{j-1}q_lq_{l+1}\dots q_{j-1}} E_{ij}E_{ji}.
$$

\begin{conj}
\label{Lenya}\footnote{It was recently proved by A.~Negut.}
The isomorphism $\Psi:V_{\ul{d}}\to\fV_{\ul{d}}$ carries the
operator $M_{\D_k}$ of quantum multiplication by $c_1(D_k)$ to the operator
$\frac{\hbar}{2}QC_k$.
\end{conj}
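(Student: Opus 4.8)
The plan is to compute the quantum multiplication operator $M_{\D_k}$ directly and match it, in the fixed point basis $\{[\widetilde{\ul{d}}]\}$, against the matrix coefficients of $\frac{\hbar}{2}QC_k$ in the Gelfand--Tsetlin basis $\{\xi_{\widetilde{\ul{d}}}\}$, which is identified with $\{[\widetilde{\ul{d}}]\}$ by Theorem~\ref{feigin}. The first reduction is the divisor axiom: since $c_1(\D_k)$ is a divisor class and $\widetilde{T}\times\BC^*$ has isolated fixed points on $\fQ_{\ul{d}}$,
$$
M_{\D_k}=c_1(\D_k)\cup(-)\ +\ \sum_{\beta>0}\langle\beta,c_1(\D_k)\rangle\,q^\beta\,Z_\beta,
$$
where the sum runs over nonzero effective classes $\beta\in H_2(\fQ_{\ul{d}})$, $q^\beta=\prod_i q_i^{\langle\beta,c_1(\D_i)\rangle}$, and $Z_\beta\in\End(V_{\ul{d}})$ is the genus-zero descendant-free two-point Gromov--Witten operator of class $\beta$, defined by equivariant virtual localization on $\overline{M}_{0,2}(\fQ_{\ul{d}},\beta)$, whose $\widetilde{T}\times\BC^*$-fixed locus (chains of invariant $\BP^1$'s joining the fixed points $\widetilde{\ul{d}}$) is proper, so the invariants are well defined. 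By Corollary~\ref{IAS}, $\Psi$ carries the classical term $c_1(\D_k)\cup(-)$ to $\frac{\hbar}{2}\wt{Cas_k}=\frac{\hbar}{2}QC_k|_{q=0}$, so the whole problem reduces to identifying the quantum corrections $\sum_{\beta>0}\langle\beta,c_1(\D_k)\rangle q^\beta Z_\beta$ with $\frac{\hbar}{2}\big(QC_k-\wt{Cas_k}\big)=\frac{\hbar}{2}\sum_{i<k<j}\big(\ldots\big)E_{ij}E_{ji}$.

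The heart of the argument is then a localization computation of the operators $Z_\beta$. Using Lemma~\ref{plusminus} and the small resolution $\pi_{\ul{d}}:\fQ_{\ul{d}}\to Z_{\ul{d}}$ to describe the effective cone (with basis dual to $\{c_1(\D_2),\dots,c_1(\D_{n-1})\}$ via the lines $P_k$), one expects that $Z_\beta$ vanishes unless $\beta$ is the class of one of the rational curves obtained by propagating a single point defect across a block of consecutive levels $i,i{+}1,\dots,j{-}1$ for some $1\le i<j\le n$ (the ``$\alpha_{ij}$-curves'', governed by the correspondences $\ul{\fE}{}_{\ul{d},\alpha_{ij}}$ of~\ref{kuzn}), and that for such $\beta$ the virtual localization integral produces exactly $\ul{\bq}{}_{ij*}\ul{\bp}{}_{ij}^*\circ\ul{\bp}{}_{ij*}\ul{\bq}{}_{ij}^*$ up to an explicit scalar in $\BC(\ft\oplus\BC)$, i.e.\ under $\Psi$ a scalar multiple of $E_{ij}E_{ji}$. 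Concretely one writes down the eigenvalues of each $Z_\beta$ in the basis $\{[\widetilde{\ul{d}}]\}$ from the fixed loci of $\overline{M}_{0,2}(\fQ_{\ul{d}},\beta)$ and their virtual normal bundles, exactly in the spirit of the proof of Theorem~\ref{feigin}.

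It then remains to sum the $q$-series and match. For a fixed pair $(i,j)$ the contributing classes $\beta$ all project to the same ``$(i,j)$-jump''; their weights $q^\beta$ range over $q_{i+1}q_{i+2}\cdots q_{j-1}$ and over the monomials obtained by repeating intermediate sub-blocks, and the boundary (nodal) degenerations organize these into the geometric series $\sum_{m\ge0}\big(\sum_{l=i+1}^{j-1}q_lq_{l+1}\cdots q_{j-1}\big)^m=\big(1+\sum_{l=i+1}^{j-1}q_lq_{l+1}\cdots q_{j-1}\big)^{-1}$, which is the denominator appearing in $QC_k$ and, in particular, shows that the a priori purely formal operator $M_{\D_k}$ is a rational function on $\mathsf{T}$. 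The pairing factor $\langle\beta,c_1(\D_k)\rangle$ is nonzero exactly when $i<k<j$ and, summed over the primitive sub-blocks, reproduces the numerator $\sum_{l=i+1}^{k}q_lq_{l+1}\cdots q_{j-1}$; this also explains why only the root vectors $E_{ij}E_{ji}$ with $i<k<j$ occur. Comparing with the matrix coefficients of $E_{ij}E_{ji}$ in the Gelfand--Tsetlin basis, read off from the formulas of~\cite{m} (or~\cite{fk}) as in Theorem~\ref{feigin}, then yields $\Psi M_{\D_k}\Psi^{-1}=\frac{\hbar}{2}QC_k$.

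Alternatively, and more conceptually, one may attempt a rigidity argument: show that $\nabla=\sum_{i=2}^{n-1}q_i\frac{\partial}{\partial q_i}+QC_i$ and the Casimir connection of~\cite{dc,fmtv,TL,MTL}, pulled back along $\mathsf{T}\hookrightarrow\fh\subset\fsl_n$, are two flat $\End(V_{\ul{d}})$-valued connections on a chart of the configuration space sharing the same leading term $\frac{\hbar}{2}\wt{Cas_\bullet}$ at $q=0$ and the same residues $\frac{\hbar}{2}e_\alpha e_{-\alpha}$ along the root hyperplanes, hence equal; for the quantum connection this would rely on the general structure theory of quantum cohomology of symplectic resolutions ($\fQ_{\ul{d}}$ being a small resolution of Drinfeld's Zastava space, symplectic on $^\circ\fQ_{\ul{d}}$, cf.~\cite{fkmm}). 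Either way, the decisive and still open step is the explicit $\widetilde{T}\times\BC^*$-localization computation of the two-point invariants $Z_\beta$ on $\overline{M}_{0,2}(\fQ_{\ul{d}},\beta)$ — controlling the fixed loci and their virtual normal bundles — together with the resummation that turns the formal $q$-series into the rational functions appearing in $QC_k$; this is the main obstacle.
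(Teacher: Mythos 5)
The statement you are trying to prove is Conjecture~\ref{Lenya}: the paper does not prove it, and offers only a heuristic justification (the Remark following the corollaries: by Vinberg's theorem~\cite{Vin} the quantum corrections, \emph{if} quadratic in the root vectors, must span some $Q_\mu$; the correction must annihilate $\fv_{\ul{d}}$ because the unit stays the unit; and flatness of the quantum connection then pins down the parametrization $\mu(q_2,\ldots,q_{n-1})$). Your second, ``rigidity'' route is essentially a sharpened version of exactly this heuristic, and it inherits the same gap the authors themselves flag: \emph{``unfortunately, we have no idea how to prove that the operators $M_{\D_k}$ are quadratic expressions in the correspondences.''} Without an a priori bound on the shape of the quantum corrections (quadraticity, or at least the statement that $Z_\beta$ is supported on the weight-graded pieces $V_{\ul{d}}\to V_{\ul{d}\pm\alpha_{ij}}\to V_{\ul{d}}$), neither the residue comparison nor the flatness argument can get started, since flatness plus the $q=0$ limit does not by itself determine a connection.

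Your first route is the honest one, and you correctly identify where it breaks: everything after the divisor axiom --- that only the ``$\alpha_{ij}$-curve'' classes contribute, that the two-point invariants $Z_\beta$ localize to $\ul{\bq}{}_{ij*}\ul{\bp}{}_{ij}^*\circ\ul{\bp}{}_{ij*}\ul{\bq}{}_{ij}^*$ up to an explicit scalar, and that the nodal degenerations resum into the geometric series producing the denominators $1+\sum_{l=i+1}^{j-1}q_l\cdots q_{j-1}$ --- is asserted, not proved. That localization computation on $\overline{M}_{0,2}(\fQ_{\ul{d}},\beta)$ (identification of the fixed loci, their virtual normal bundles, and the resummation) \emph{is} the content of the conjecture; declaring it ``the main obstacle'' in the last sentence is an accurate self-assessment but means no proof has been given. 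In short: your proposal is a reasonable and well-organized research plan, consistent with (and somewhat more detailed than) the authors' own motivating remarks, but it does not establish the statement, which remains a conjecture both in the paper and in your write-up.
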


\begin{cor} The localized equivariant quantum cohomology ring of
$\fQ_{\ul{d}}$ is isomorphic to the quotient of the shift of argument
subalgebra $\A_\mu$ by the annihilator of $\fv_{\ul{d}}$.
\end{cor}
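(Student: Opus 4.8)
The plan is to deduce this corollary from Conjecture~\ref{Lenya} together with the structural results already established for the classical (non-quantum) case, essentially by mimicking the argument of Proposition~\ref{gc}. First I would record that, by Conjecture~\ref{Lenya}, the isomorphism $\Psi:\ V_{\ul{d}}\iso\fV_{\ul{d}}$ intertwines the operators $M_{\D_k}$ of quantum multiplication by $c_1(\D_k)$, $2\leq k\leq n-1$, with the operators $\frac{\hbar}{2}QC_k$ acting on the weight space $\fV_{\ul{d}}$ of the universal Verma module. Since the small quantum cohomology ring of $\fQ_{\ul{d}}$ (after localization) is commutative and is generated over $\BC(\ft\oplus\BC)$ by the divisor classes $c_1(\D_k)$ — this generation statement follows from Proposition~\ref{gc}(c) by the standard deformation argument, as the quantum product is a $\BC[[q_2,\dots,q_{n-1}]]$-deformation of the classical one and the classical generators $c_1(\D_k)$ remain generators after a flat deformation — the localized equivariant quantum cohomology ring is identified via $\Psi$ with the subalgebra of $\End(\fV_{\ul{d}})$ generated by the $QC_k$.

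Next I would identify that subalgebra. By construction the elements $QC_k$ lie in the shift of argument subalgebra $\A_\mu\subset U(\fgl_n)$ (the computation preceding Conjecture~\ref{Lenya} exhibits the $\sum_{i<j\le k}E_{ij}E_{ji}$-parts of the generators of $Q_\mu$, and $QC_k$ differs from these only by the Cartan terms absorbed into $\wt{Cas_k}$, which lie in $\A_\mu$ as well). One then needs the fact that the $QC_k$, $2\leq k\leq n-1$, together generate $\A_\mu$ as an algebra over $\BC(\ft\oplus\BC)$. This is the $\fgl_n$-analogue of the statement that the quadratic elements $Cas_k$ generate the Gelfand-Tsetlin subalgebra, and it holds because $\A_\mu$ for $\fsl_n$ is an $(n-2)$-parametric flat deformation of the Gelfand-Tsetlin subalgebra (cf.~\cite{Ryb1,Ryb2}): the quadratic part $Q_\mu$ deforms the quadratic generators $Cas_k$ of $\ul{\fG}$, and since these generate $\ul{\fG}$ (as used in Proposition~\ref{gc}), their deformations $QC_k$ generate $\A_\mu$ by the graded Nakayama lemma. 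Consequently, the image under $\Psi$ of the localized quantum cohomology ring of $\fQ_{\ul{d}}$ is precisely the image of $\A_\mu$ in $\End(\fV_{\ul{d}})$, i.e. $\A_\mu$ acting on the cyclic vector $\fv_{\ul{d}}$ (cyclicity of $\fv_{\ul{d}}$ for $\ul{\fG}$, hence for its deformation $\A_\mu$, is inherited from the classical case).

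Finally I would package this: the action of $\A_\mu$ on $\fv_{\ul{d}}\in\fV_{\ul{d}}$ factors through the quotient $\A_\mu/\operatorname{Ann}_{\A_\mu}(\fv_{\ul{d}})$, and this quotient maps isomorphically onto its image in $\End(\fV_{\ul{d}})$ because $\fv_{\ul{d}}$ generates $\fV_{\ul{d}}$ as an $\A_\mu$-module (so the image of any $a\in\A_\mu$ is determined by $a\cdot\fv_{\ul{d}}$). Identifying this with the localized equivariant quantum cohomology ring via $\Psi$ finishes the proof. The main obstacle I anticipate is the deformation-theoretic input: justifying both that the quantum product ring is still generated by the divisor classes $c_1(\D_k)$ (which requires knowing the quantum corrections do not destroy the surjection $\BC(\ft\oplus\BC)[c_1(\D_2),\dots,c_1(\D_{n-1})]\twoheadrightarrow QH^\bullet$ established classically in Proposition~\ref{gc}), and that the $QC_k$ generate all of $\A_\mu$ rather than merely a proper subalgebra — the latter relying on the precise form of the deformation of the Gelfand-Tsetlin algebra from~\cite{Ryb1,Ryb2}. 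Both are routine given those references, but they are the places where genuine content enters beyond formal manipulation.
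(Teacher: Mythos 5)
Your overall strategy --- the deformation argument showing the quantum ring is still generated by the divisor classes, \conjref{Lenya} to transport these generators to $\tfrac{\hbar}{2}QC_k$, and cyclicity of $\fv_{\ul{d}}$ --- is the right one (the paper gives no explicit proof of this corollary). But the pivotal intermediate claim, that the $n-2$ elements $QC_k$ generate $\A_\mu$ as an algebra, is false, as is the parallel claim on which you base it, that the $Cas_k$ generate the Gelfand--Tsetlin subalgebra. Both $\fG$ and $\A_\mu$ are free commutative algebras on $\tfrac12(\dim\fgl_n+\rk\fgl_n)=\tfrac{n(n+1)}{2}$ generators, so no collection of $n-2$ (or $n-1$) elements can generate them. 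This is exactly why the proof of \propref{gc}(c) is careful to assert only that the \emph{images} of the $Cas_k$ in $\End(\fV_{\ul{d}})$ (equivalently in the quotient $\fG_{\ul{d}}$) generate, the mechanism being that the $Cas_k$ have pairwise distinct joint eigenvalues on the finitely many Gelfand--Tsetlin vectors spanning $\fV_{\ul{d}}$ and hence generate the full diagonal algebra there. Your ``graded Nakayama'' deformation of the classical generation statement therefore has nothing to deform.

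The gap is repairable, and your final paragraph already contains the needed ingredients, provided everything is run at the level of quotients by the annihilator rather than inside $U(\fgl_n)$. Since $\langle QC_k\rangle\subset\A_\mu$, one has $\on{Ann}_{\langle QC_k\rangle}(\fv_{\ul{d}})=\on{Ann}_{\A_\mu}(\fv_{\ul{d}})\cap\langle QC_k\rangle$, hence an injection $\langle QC_k\rangle/\on{Ann}(\fv_{\ul{d}})\hookrightarrow\A_\mu/\on{Ann}(\fv_{\ul{d}})$. The left-hand side is identified via $\Psi$ and the divisor-generation statement with the localized quantum cohomology ring, so its dimension over $\BC(\ft\oplus\BC)$ is $\dim V_{\ul{d}}=\dim\fV_{\ul{d}}$; the right-hand side embeds into $\fV_{\ul{d}}$ by $a\mapsto a\,\fv_{\ul{d}}$, so its dimension is at most $\dim\fV_{\ul{d}}$. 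Hence the injection is an isomorphism and the corollary follows. (Alternatively, one can argue as in \propref{gc}(c): at $q=0$ the $QC_k$ specialize to the $\wt{Cas_k}$, which have distinct joint eigenvalues on $\fV_{\ul{d}}$, so for generic $q$ the images of the $QC_k$ generate the full diagonal algebra in their joint eigenbasis, which contains the image of the commutative algebra $\A_\mu$.) Note that with the first repair the cyclicity of $\fv_{\ul{d}}$ for $\A_\mu$ need not be invoked as a separate input --- it follows from cyclicity for the subalgebra generated by the $QC_k$ --- whereas your appeal to it as ``inherited from the classical case'' is precisely the analogue of \lemref{Bekasovo} that the paper itself only asserts, without proof, in the remark following \conjref{Len}.
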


Let $\ul{\A}{}_\mu$ denote the integral form $\A_\mu\cap\ul\fU$.

\begin{conj}
\label{Len}
The equivariant quantum cohomology ring of
$\fQ_{\ul{d}}$ is isomorphic to the quotient of $\ul{\A}{}_\mu$ by the
annihilator of $\fv_{\ul{d}}$.
\end{conj}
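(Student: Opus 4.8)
The plan is to deduce Conjecture~\ref{Len} from Conjecture~\ref{Lenya} together with the integral structure already developed in Sections~3. Granting Conjecture~\ref{Lenya}, the localized statement (the Corollary above) identifies the localized equivariant quantum cohomology ring $H^\bullet_{\widetilde T\times\BC^*}(\fQ_{\ul d})\otimes_{\BC[\ft\oplus\BC]}\BC(\ft\oplus\BC)$, equipped with the quantum product, with the quotient of $\A_\mu$ by the annihilator of $\fv_{\ul d}$ inside $\fV_{\ul d}$. So what remains is purely a question of integral forms: one must match the $\BC[\ft\oplus\BC][[q_2,\dots,q_{n-1}]]$-lattice $'V_{\ul d}=H^\bullet_{\widetilde T\times\BC^*}(\fQ_{\ul d})[[q]]$ with its quantum ring structure against the quotient of $\ul\A_\mu:=\A_\mu\cap\ul\fU$ by $\on{Ann}(\fv_{\ul d})$, inside the already-established identification $'V\iso\ul\fV^*$ of Theorem~\ref{braverman}.

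First I would observe that under $\Phi:\ 'V\iso\ul{\fV}^*$ the quantum multiplication operators $M_{\D_k}$ preserve the integral lattice $'V_{\ul d}=H^\bullet_{\widetilde T\times\BC^*}(\fQ_{\ul d})$, since quantum multiplication by an integral class is defined over $\BZ$ (the structure constants are Gromov--Witten invariants, hence integers, and the quantum parameters $q_i$ are the formal exponentiated Kähler classes attached to the integral basis $c_1(\D_k)$). So $\frac{\hbar}{2}QC_k$, which equals $M_{\D_k}$ under $\Psi$ by Conjecture~\ref{Lenya}, acts on the lattice $\ul\fV^*_{\ul d}$; this already shows $\frac{\hbar}{2}QC_k\in\ul\fU$ modulo the annihilator of $\fv_{\ul d}$. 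Next, combining this with Corollary~\ref{IAS}(c) (which identifies the classical part $\frac{\hbar}{2}\wt{Cas_k}$ with $c_1(\D_k)$ up to a constant in $\BC[\ft\oplus\BC]$) and with the explicit formula for $QC_k$, I would check that $\frac{\hbar}{2}QC_k$ lies in the integral form $\ul\A_\mu$: indeed $\frac{\hbar}{2}\wt{Cas_k}\in\ul\fG\subset\ul\fU$ by the construction in Section~3, the correction terms $\frac{\hbar}{2}E_{ij}E_{ji}$ with $i<k<j$ equal $\frac{1}{2\hbar}\ul E{}_{ij}\ul E{}_{ji}$, which a direct PBW computation shows lie in $\ul\fU$, and the rational functions of $q$ in the coefficients are power series in $q_2,\dots,q_{n-1}$ with constant term zero, so they do not spoil integrality.

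Then the main step is the reverse inclusion: that the classes $c_1(\D_k)$ (equivalently, the operators $\frac{\hbar}{2}QC_k$) together with $\BC[\ft\oplus\BC][[q]]$ \emph{generate} the full quantum cohomology ring $H^\bullet_{\widetilde T\times\BC^*}(\fQ_{\ul d})[[q]]$ as an algebra, and that the induced surjection from $\ul\A_\mu$ has kernel exactly the annihilator of $\fv_{\ul d}$. Generation is inherited from the classical case: by Proposition~\ref{gc}(c) the $c_1(\D_k)$ generate the localized classical ring, and by Lemma~\ref{beau}/Corollary~\ref{IAS}(b) the integral equivariant ring $H^\bullet_{\widetilde T\times\BC^*}(\fQ_{\ul d})$ is generated by Chern classes over $\BC[\ft\oplus\BC]$; since the quantum ring is a flat deformation over $\BC[[q]]$ with the same underlying module, a graded Nakayama argument in the $q$-adic filtration (whose associated graded is the classical ring) promotes classical generation by the $c_1(\D_k)$ to quantum generation. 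Finally, since $\Phi(1_{\ul d})=\fv_{\ul d}$ and $1_{\ul d}$ is the unit of the quantum ring, the surjection $\ul\A_\mu\twoheadrightarrow H^\bullet_{\widetilde T\times\BC^*}(\fQ_{\ul d})[[q]]$, $a\mapsto a\cdot 1_{\ul d}$, has kernel precisely $\{a\in\ul\A_\mu:\ a\fv_{\ul d}=0\}$, giving the desired isomorphism with $\ul\A_\mu/\on{Ann}(\fv_{\ul d})$.

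I expect the hard part to be the integrality of the correction terms and the passage from localized to integral generators — that is, verifying that the surjection $\A_\mu/\on{Ann}(\fv_{\ul d})\iso V_{\ul d}$ of the Corollary restricts to an \emph{iso}morphism between the integral forms $\ul\A_\mu/\on{Ann}(\fv_{\ul d})$ and $'V_{\ul d}=\ul\fV^*_{\ul d}$, rather than merely a map. The surjectivity is handled by Nakayama as above, but injectivity requires knowing that the annihilator of $\fv_{\ul d}$ in $\ul\A_\mu$ is the contraction of the annihilator in $\A_\mu$ — equivalently that $\ul\A_\mu/\on{Ann}$ is torsion-free over $\BC[\ft\oplus\BC][[q]]$, which should follow from the fact that $\ul\fV^*_{\ul d}$ is by definition a lattice in the free module $\fV_{\ul d}$ and hence torsion-free, but this deserves careful checking. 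Of course the entire argument is conditional on Conjecture~\ref{Lenya}; an unconditional proof of Conjecture~\ref{Len} would instead require an independent computation of the three-point genus-zero equivariant Gromov--Witten invariants of $\fQ_{\ul d}$ with one insertion a divisor, which is beyond the scope of these remarks.
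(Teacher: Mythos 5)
This statement is a \emph{conjecture} in the paper: the authors give no proof of it, only the remark that ``the analogue of Lemma~\ref{Bekasovo} is valid for $\ul{\A}{}_\mu$ as well (and the proof is the same).'' So there is no proof of record to match, and your write-up is, as you yourself note, conditional on \conjref{Lenya}; at best it is a proposed reduction of one open conjecture to another, not a proof.

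Even as a conditional argument it has a genuine gap at the surjectivity step. You claim that the classes $c_1(\D_k)$ generate the nonlocalized ring $H^\bullet_{\widetilde{T}\times\BC^*}(\fQ_{\ul{d}})$ over $\BC[\ft\oplus\BC]$, citing \propref{gc}(c) and \corref{IAS}(b); but \propref{gc}(c) is a statement about the \emph{localized} ring, and \corref{IAS}(b) only concerns $H^2$. Over the integral base the ring is $\ul{\fG}{}_{\ul{d}}$ (\corref{mor}), whose generation requires all the K\"unneth Chern classes $c_j^{(j)}(\ul{\CW}{}_i)$, $c_j^{(j-1)}(\ul{\CW}{}_i)$ of \lemref{beau}: the quadratic Casimirs generate only after inverting elements of $\BC[\ft\oplus\BC]$, since they separate the Gelfand--Tsetlin eigenlines only generically. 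Hence your $q$-adic Nakayama argument fails already at $q=0$, and the surjectivity of $\ul{\A}{}_\mu\to{}'V_{\ul{d}}$ does not follow from it. The route the paper envisages is different: one proves the analogue of \lemref{Bekasovo} directly, i.e.\ integral cyclicity of $\fv_{\ul{d}}$ under the \emph{full} subalgebra $\ul{\A}{}_\mu$, by the same associated-graded/Whittaker-module argument (using the shift-of-argument analogue of the statement from~\cite{KW},~\cite{Tar} that the restriction to $\ff+\fg_{\geq0}$ is onto), and separately observes that every quantum multiplication operator commutes with the span of the $\frac{\hbar}{2}QC_k$ and with the Cartan, hence lies in $\A_\mu$ because $\A_\mu$ is by definition the centralizer of $Q_\mu$. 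Finally, your appeal to ``GW invariants are integers'' to see that $M_{\D_k}$ preserves the lattice ${}'V_{\ul{d}}$ is not available as stated: $\fQ_{\ul{d}}$ is noncompact, the equivariant quantum product is defined via localization, and integrality of the structure constants over $H^\bullet_{\widetilde{T}\times\BC^*}(pt)$ is itself a nontrivial claim requiring proof.
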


\begin{rem} \emph{It is natural to expect~\ref{Len} since the analogue of
Lemma~\ref{Bekasovo} is valid for $\ul{\A}{}_\mu$ as well
(and the proof is the same).}
\end{rem}

The map $(q_2,\ldots,q_{n-1})\mapsto\mu=
\sum\limits_{i=1}^{n-1}q_{i+1}q_{i+2}\dots q_{n-1}\omega_i$
embeds the torus ${\mathsf T}=\BC^{*(n-2)}$ with coordinates
$q_2,\ldots,q_{n-1}$ into the Cartan subalgebra $\hh=\BC^{n-1}$ of $\fsl_n$
as an open subset of
an affine hyperplane. Restricting the Casimir connection to $\mathsf T$
and adding an appropriate Cartan term, we obtain the following flat
connection on $\mathsf T$ in the coordinates $q_i$:
$$
\nabla = d+\kappa\sum\limits_{k=2}^{n-1} QC_k\frac{dq_k}{q_k}.
$$
On the other hand, the trivial vector bundle with the fiber $V_{\ul{d}}$
over the space of quantum parameters $\mathsf T$
is equipped with the {\em quantum connection}
$d+\sum\limits_{k=2}^{n-1} M_{\D_k}
\frac{dq_k}{q_k}$.

\begin{cor} The isomorphism $\Psi$ carries the quantum connection to the
Casimir connection with $\kappa=\frac{\hbar}{2}$.
\end{cor}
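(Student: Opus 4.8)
The plan is to read this off directly from \conjref{Lenya}, together with the identification of the restricted Casimir connection with $\nabla=d+\kappa\sum_{k=2}^{n-1}QC_k\,\frac{dq_k}{q_k}$ that was carried out in the paragraph preceding this corollary. First I would observe that the isomorphism $\Psi\colon V_{\ul{d}}\to\fV_{\ul{d}}$ of \thmref{brav} is purely representation-theoretic and carries no dependence on the quantum parameters $q_2,\dots,q_{n-1}$. Extending scalars (to functions on $\mathsf T$, or to formal power series near $q=0$), $\Psi$ induces an isomorphism of the trivial bundle with fiber $V_{\ul{d}}$ onto the trivial bundle with fiber $\fV_{\ul{d}}$ over $\mathsf T$ that commutes with the de Rham differential, $\Psi\circ d\circ\Psi^{-1}=d$. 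Conjugating the quantum connection by $\Psi$ therefore replaces each operator $M_{\D_k}$ of quantum multiplication by $c_1(\D_k)$ by $\Psi M_{\D_k}\Psi^{-1}$, which by \conjref{Lenya} equals $\frac{\hbar}{2}QC_k$; that is,
\[
\Psi\circ\Bigl(d+\sum_{k=2}^{n-1}M_{\D_k}\frac{dq_k}{q_k}\Bigr)\circ\Psi^{-1}
= d+\frac{\hbar}{2}\sum_{k=2}^{n-1}QC_k\,\frac{dq_k}{q_k}.
\]

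Next I would recognise the right-hand side as the Casimir connection at $\kappa=\frac{\hbar}{2}$. Indeed it is exactly the connection $\nabla$ displayed just above, with $\kappa$ set to $\frac{\hbar}{2}$, and that connection was obtained from the Casimir connection $d+\kappa\sum_{\alpha\in\Delta_+}e_\alpha e_{-\alpha}\,\frac{d\alpha}{\alpha}$ by pulling back along the embedding $\mathsf T\hookrightarrow\hh$, $\mu=\sum_i q_{i+1}\cdots q_{n-1}\,\omega_i$, and adding an appropriate Cartan correction. The correction is a closed $U(\hh)$-valued $1$-form, so it preserves both flatness and the monodromy class (each $e_\alpha e_{-\alpha}$ commutes with $\hh$), and the pullback computation is the same regrouping already used to exhibit the generators $\sum_{\alpha\in\Delta_+}\frac{\langle h_k,\alpha\rangle}{\langle\mu,\alpha\rangle}e_\alpha e_{-\alpha}$ of $Q_\mu$: collecting terms in the coordinates $q_k$ turns $\sum_\alpha e_\alpha e_{-\alpha}\frac{d\alpha}{\alpha}$ into $\sum_{k=2}^{n-1}QC_k\,\frac{dq_k}{q_k}$ up to the closed $U(\hh)$-valued term absorbed by the Cartan correction. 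Comparing with the displayed formula gives the assertion.

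The only genuine step is the last regrouping, namely that the $\mathsf T$-restricted, Cartan-adjusted Casimir $1$-form is \emph{literally} $\kappa\sum_{k=2}^{n-1}QC_k\,\frac{dq_k}{q_k}$; since this is precisely the computation already performed when defining $QC_k$ from $Q_\mu$, I do not expect any real obstacle here beyond the bookkeeping. It is worth recording that both connections are flat \emph{a priori} — the quantum connection by the general structure of the Dubrovin connection on the space of quantum parameters, and the Casimir connection by~\cite{dc,fmtv,TL,MTL} — so that matching the connection $1$-forms, as above, is all that is needed.
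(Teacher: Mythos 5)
Your argument is correct and is exactly the (implicit) one in the paper: the corollary is an immediate consequence of \conjref{Lenya}, since $\Psi$ is independent of the $q_k$ and hence commutes with $d$, so conjugation replaces each $M_{\D_k}$ by $\frac{\hbar}{2}QC_k$ in the connection $1$-form, yielding the restricted Casimir connection $d+\kappa\sum_{k=2}^{n-1}QC_k\,\frac{dq_k}{q_k}$ at $\kappa=\frac{\hbar}{2}$ as defined in the preceding paragraph. No further content is needed.
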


\begin{rem} {\em According to Vinberg~\cite{Vin}, the family of subspaces
$Q_{\mu}$ form an open subset in the moduli space of $(n-1)$-dimensional
spaces of commuting linear combinations of $e_\alpha e_{-\alpha}$ in
$U(\gg)$. Thus it is natural to expect that the operators $M_{\D_k}$ span
the space $Q_{\mu}$ for some $\mu$ depending on $q_2,\ldots,q_{n-1}$ (but
unfortunately, we have no idea how to prove that the operators
$M_{\D_k}$ are quadratic expressions in the correspondences). Moreover,
since the unit in the cohomology ring remains unit in the quantum
cohomology ring, the quantum correction has to annihilate the vector
$\fv_{\ul{d}}$. Therefore the operator $\Psi(M_{\D_k})$ is $QC_k$ up to
a change of parametrization. Finally, the flatness of the quantum
connection $d+\sum\limits_{k=2}^{n-1} M_{\D_k}\frac{dq_k}{q_k}$ is a
very restrictive condition on the parametrization
$\mu(q_2,\ldots,q_{n-1})$ --- this leaves no other choice for
$\Psi(M_{\D_k})$ but to coincide with $\frac{\hbar}{2}QC_k$.}
\end{rem}

\section{Global Laumon spaces}
\label{GLS}

\subsection{Correspondences} 
Recall the setup of~\ref{classic}. We define two versions
of the correspondences $\CE_{\ul{d},i}\subset\CQ_{\ul{d}}\times\CQ_{\ul{d}+i}$,
namely, $\CE_{\ul{d},i}^0:=\bfr^{-1}\{0\},\
\CE_{\ul{d},i}^\infty:=\bfr^{-1}\{\infty\}$. Their projections to
$\CQ_{\ul{d}}$ (resp. $\CQ_{\ul{d}+i}$) will be denoted by $\bp^0,\bp^\infty$
(resp. $\bq^0,\bq^\infty$). The projection of $\CE_{\ul{d},i}$ to
$\CQ_{\ul{d}}$ (resp. $\CQ_{\ul{d}+i}$) will be denoted by $\bp^\bC$
(resp. $\bq^\bC$).

We denote by ${}'A$ (resp. ${}'B$)
the direct sum of equivariant (complexified) cohomology:
${}'A=\oplus_{\ul{d}}H^\bullet_{\widetilde{T}\times\BC^*}(\CQ_{\ul{d}})$
(resp. ${}'B=\oplus_{\ul{d}}H^\bullet_{G\times\BC^*}(\CQ_{\ul{d}})$).
We define $A=\ {}'A\otimes_{H^\bullet_{\widetilde{T}\times\BC^*}(pt)}
\on{Frac}(H^\bullet_{\widetilde{T}\times\BC^*}(pt))$, and
$B=\ {}'B\otimes_{H^\bullet_{G\times\BC^*}(pt)}
\on{Frac}(H^\bullet_{G\times\BC^*}(pt))$.

We have an evident grading $A=\oplus_{\ul{d}}A_{\ul{d}},\
A_{\ul{d}}=H^\bullet_{\widetilde{T}\times\BC^*}(\CQ_{\ul{d}})
\otimes_{H^\bullet_{\widetilde{T}\times\BC^*}(pt)}
\on{Frac}(H^\bullet_{\widetilde{T}\times\BC^*}(pt))$;
similarly, $B=\oplus_{\ul{d}}B_{\ul{d}},\
B_{\ul{d}}=H^\bullet_{G\times\BC^*}(\CQ_{\ul{d}})
\otimes_{H^\bullet_{G\times\BC^*}(pt)}
\on{Frac}(H^\bullet_{G\times\BC^*}(pt))$.

Note that $H^\bullet_{G\times\BC^*}(pt)=\BC[e_1,\ldots,e_n,\hbar]$ where
$e_i$ is the $i$-th elementary symmetric function of $x_1,\ldots,x_n$.

\subsection{Fixed points}
The set of $\widetilde{T}\times\BC^*$-fixed points in $\CQ_{\ul{d}}$ is
finite; it is described in~\cite{fk},~2.11. Recall that this fixed point set
is in bijection with the set of collections $\{\widehat{\ul{d}}\}$ of the
following data: a) a permutation $\sigma\in S_n$; b) a matrix $(d^0_{ij})$
(resp. $d^\infty_{ij}),\ i\geq j$ of nonnegative integers such that for
$i\geq j\geq k$ we have $d^0_{kj}\geq d^0_{ij}$ (resp. $d^\infty_{kj}\geq
d^\infty_{ij}$) such that $d_i=\sum_{j=1}^i(d^0_{ij}+d^\infty_{ij})$.
Abusing notation we
denote by $\widehat{\ul{d}}$ the corresponding
$\widetilde{T}\times\BC^*$-fixed point in $\CQ_{\ul{d}}$:

$\CW_1=\CO_\bC(-d^0_{11}\cdot0-d^\infty_{11}\cdot\infty)w_{\sigma(1)},$

$\CW_2=\CO_\bC(-d^0_{21}\cdot0-d^\infty_{21}\cdot\infty)w_{\sigma(1)}\oplus
\CO_\bC(-d^0_{22}\cdot0-d^\infty_{22}\cdot\infty)w_{\sigma(2)},$

$\ldots\ \ldots\ \ldots\ ,$

$\CW_{n-1}=\CO_\bC(-d^0_{n-1,1}\cdot0-d^\infty_{n-1,1}\cdot\infty)
w_{\sigma(1)}\oplus
\CO_\bC(-d^0_{n-1,2}\cdot0-d^\infty_{n-1,2}\cdot\infty)w_{\sigma(2)}\oplus$

$\oplus\ldots\oplus\CO_\bC(-d^0_{n-1,n-1}\cdot0-d^\infty_{n-1,n-1}\cdot\infty)
w_{\sigma(n-1)}.$

Also, we will write $\widehat{\ul{d}}=(\sigma,\widetilde{\ul{d}}{}^0,
\widetilde{\ul{d}}{}^\infty)$.

The localized equivariant cohomology $A$ is equipped with the basis
of direct images of the fundamental classes of the fixed points; by an abuse
of notation, this basis will be denoted $\{[\widehat{\ul{d}}]\}$.

\subsection{Chern classes of tautological bundles}
As in the proof of Theorem~\ref{braverman} and Corollary~\ref{mor} we
see that the $H^\bullet_{\widetilde{T}\times\BC^*}(pt)$-algebra
$H^\bullet_{\widetilde{T}\times\BC^*}(\CQ_{\ul{d}})$ is generated
by the K\"unneth components of the Chern classes $c_j^{(j)}(\ul{\CW}{}_i^\bC),\
c_j^{(j-1)}(\ul{\CW}{}_i^\bC),\ 1\leq j\leq i\leq n-1$, of the tautological
vector bundles $\ul{\CW}{}_i^\bC$ on $\CQ_{\ul{d}}\times\bC$.
We compute the operators
of multiplication by $c_j^{(j)}(\ul{\CW}{}_i^\bC),\
c_j^{(j-1)}(\ul{\CW}{}_i^\bC),\ 1\leq j\leq i\leq n-1$, in the equivariant
cohomology ring $H^\bullet_{\widetilde{T}\times\BC^*}(\CQ_{\ul{d}})$ in the
fixed point basis $\{[\widehat{\ul{d}}]\}$.

We introduce the following notation. For a function $f(x_1,\ldots,x_n,\hbar)\in
\BC(\ft\oplus\BC)$ and a permutation $\sigma\in S_n$ we set
$f^\sigma(x_1,\ldots,x_n,\hbar):=f(x_{\sigma(1)},\ldots,x_{\sigma(n)},\hbar)$.
Also, we set $\bar{f}(x_1,\ldots,x_n,\hbar):=f(x_1,\ldots,x_n,-\hbar)$.
For $1\leq j\leq i$, let $e_{ji}^0(\widehat{\ul{d}})$
(resp. $e_{ji}^\infty(\widehat{\ul{d}})$) stand for the sum of products
of the $j$-tuples of distinct elements of the set $\{-x_1+d_{i,1}^0\hbar,
\ldots,-x_i+d_{i,i}^0\hbar\}$ (resp. of the set $\{-x_1+d_{i,1}^\infty\hbar,
\ldots,-x_i+d_{i,i}^\infty\hbar\}$).

\begin{lem}
\label{trop}
The operator of multiplication by
$c_j^{(j)}(\ul{\CW}{}_i^\bC)$ (resp. by
$c_j^{(j-1)}(\ul{\CW}{}_i^\bC),\ 1\leq j\leq i\leq n-1$) is diagonal in
the basis $\{[\widehat{\ul{d}}]=[(\sigma,\widetilde{\ul{d}}{}^0,
\widetilde{\ul{d}}{}^\infty)]\}$
with eigenvalues $\{\frac{1}{2}(e_{ji}^0(\widehat{\ul{d}})+
e_{ji}^\infty(\widehat{\ul{d}}))^\sigma\}$ (resp.
$\{\frac{\hbar^{-1}}{2}(e_{ji}^\infty(\widehat{\ul{d}})-
e_{ji}^0(\widehat{\ul{d}}))^\sigma\}$).
\end{lem}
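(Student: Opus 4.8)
The plan is to reduce Lemma~\ref{trop} to the corresponding computation already carried out for the local Laumon spaces in the proof of Theorem~\ref{braverman}, by localizing at each $\widetilde{T}\times\BC^*$-fixed point of $\CQ_{\ul{d}}$ and identifying a formal neighbourhood with a product of two copies of a local Laumon situation, one at $0\in\bC$ and one at $\infty\in\bC$. Concretely, first I would recall that the operator of multiplication by an equivariant cohomology class is, in the localized theory, diagonal in the fixed point basis with eigenvalue the restriction of that class to each fixed point; hence it suffices to compute the restriction of $c_j(\ul{\CW}{}_i^\bC)$ (and of its two K\"unneth components) to the fixed point $\widehat{\ul{d}}=(\sigma,\widetilde{\ul{d}}{}^0,\widetilde{\ul{d}}{}^\infty)$.

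Next I would compute this restriction directly from the explicit description of the fixed points in Section~\ref{GLS}: at $\widehat{\ul{d}}$ the tautological bundle $\ul{\CW}{}_i^\bC$ restricted to $\{\widehat{\ul{d}}\}\times\bC$ is the sum $\bigoplus_{j\le i}\CO_\bC(-d^0_{ij}\cdot 0-d^\infty_{ij}\cdot\infty)w_{\sigma(j)}$, so its fibre at $0$ carries $\widetilde{T}\times\BC^*$-weights $\{-x_{\sigma(j)}+d^0_{ij}\hbar\}_{j\le i}$ and its fibre at $\infty$ carries weights $\{-x_{\sigma(j)}\}_{j\le i}$ (with the convention for $\hbar$ coming from $v(z)=v^{-2}z$, matching the normalization used in~\ref{operators}). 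By definition $e^0_{ji}(\widehat{\ul{d}})$ and $e^\infty_{ji}(\widehat{\ul{d}})$ are the $j$-th elementary symmetric functions of exactly these two weight sets after applying $\sigma$ — this is where the superscript $\sigma$ and the substitution $f\mapsto f^\sigma$ enter. Using the K\"unneth decomposition $H^\bullet_{\widetilde{T}\times\BC^*}(\fQ_{\ul{d}}\times\bC)=H^\bullet(\fQ_{\ul{d}})\otimes 1\oplus H^\bullet(\fQ_{\ul{d}})\otimes\tau$ together with the fact that $c_j(\ul{\CW}{}_i^\bC)$ restricted to the fibre over $0$ is $e^0_{ji}(\widehat{\ul{d}})^\sigma$ and over $\infty$ is $e^\infty_{ji}(\widehat{\ul{d}})^\sigma$ (where $\tau$ evaluates to $\hbar$ at one point and $0$ at the other with the appropriate sign), one solves the resulting linear system for the two K\"unneth components, obtaining precisely $\tfrac12(e^0_{ji}+e^\infty_{ji})^\sigma$ and $\tfrac{\hbar^{-1}}{2}(e^\infty_{ji}-e^0_{ji})^\sigma$. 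This is the same bookkeeping as in Theorem~\ref{braverman}, only now with the two punctures $0$ and $\infty$ both contributing and with the permutation $\sigma$ recording how the basis $w_1,\ldots,w_n$ of $W$ is distributed among the subquotients.

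Finally I would invoke the generation statement (the analogue for $\CQ_{\ul{d}}$ of Lemma~\ref{beau}, as indicated in the text just before the lemma) to know these classes actually live in the cohomology ring, though for the statement of Lemma~\ref{trop} itself only the eigenvalue computation is needed. The main obstacle I anticipate is the careful tracking of equivariant weights at $\infty$: near $\infty$ the $\BC^*$-action on $\bC$ has the opposite sign (the coordinate near $\infty$ scales by $v^{2}$ rather than $v^{-2}$), so one must be vigilant that the weight of $\CO_\bC(-d^\infty_{ij}\cdot\infty)$ at $\infty$ is $-x_{\sigma(j)}$ with no $\hbar$-contribution while the $d^0$-twist contributes $+d^0_{ij}\hbar$ at $0$ — any sign slip here propagates into the $\bar f$ and $f^\sigma$ conventions and would corrupt the final formula. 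Once the weight conventions are pinned down consistently with~\ref{operators} and Lemma~\ref{ias}, the rest is a routine elementary-symmetric-function identity.
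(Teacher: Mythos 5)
Your overall strategy is the right one and is exactly the paper's: Lemma~\ref{trop} is proved by the same argument as the corresponding computation inside the proof of Theorem~\ref{braverman} --- restrict $c_j(\ul{\CW}{}_i^\bC)$ to the two fixed points $(\widehat{\ul{d}},0)$ and $(\widehat{\ul{d}},\infty)$ of $\{\widehat{\ul{d}}\}\times\bC$ and solve the resulting linear system for the two K\"unneth components. However, there is a concrete error in your weight computation at $\infty$, precisely at the spot you yourself flagged as the danger point. At the global fixed point $\widehat{\ul{d}}=(\sigma,\widetilde{\ul{d}}{}^0,\widetilde{\ul{d}}{}^\infty)$ the relevant summand of $\ul{\CW}{}_i^\bC$ is $\CO_\bC(-d^0_{ij}\cdot0-d^\infty_{ij}\cdot\infty)w_{\sigma(j)}$, and the twist at $\infty$ certainly does contribute to the weight of the fibre at $\infty$: that fibre is $w_{\sigma(j)}$ tensored with $(\fm_\infty/\fm_\infty^2)^{\otimes d^\infty_{ij}}$, so it acquires an $\hbar$-contribution equal to $d^\infty_{ij}$ times the weight of the local coordinate at $\infty$ (which is opposite to the weight of $z$ at $0$ --- this sign flip is exactly what the $\bar{f}$, i.e.\ $\hbar\mapsto-\hbar$, convention of Lemma~\ref{bore} records). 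Your assertion that the fibre at $\infty$ has weights $\{-x_{\sigma(j)}\}$ with ``no $\hbar$-contribution'' is the local-space computation, where it holds only because the quasiflags in $\fQ_{\ul{d}}$ are based at $\infty$ and hence $d^\infty_{ij}=0$; it fails for $\CQ_{\ul{d}}$.

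This is not a harmless slip: with your weights the restrictions of $c_j(\ul{\CW}{}_i^\bC)$ to both fixed points of $\{\widehat{\ul{d}}\}\times\bC$, and hence both K\"unneth components, would be independent of $\widetilde{\ul{d}}{}^\infty$, so the claimed eigenvalues $\frac12(e^0_{ji}+e^\infty_{ji})^\sigma$ and $\frac{\hbar^{-1}}{2}(e^\infty_{ji}-e^0_{ji})^\sigma$ --- which by the definition of $e^\infty_{ji}(\widehat{\ul{d}})$ do depend on the $d^\infty_{ij}$ --- could not come out. (Compare Corollary~\ref{c1}: the eigenvalue of $c_1^{(1)}(\ul{\CW}{}_i^\bC)$ involves the full degree $d_i=\sum_j(d^0_{ij}+d^\infty_{ij})$, which your weights cannot see.) There is also an internal inconsistency in your write-up: you first declare the weight set at $\infty$ to be $\{-x_{\sigma(j)}\}$ and then declare $e^\infty_{ji}(\widehat{\ul{d}})$ to be the elementary symmetric function of ``exactly this weight set,'' whereas by definition it is the elementary symmetric function of $\{-x_1+d^\infty_{i,1}\hbar,\ldots,-x_i+d^\infty_{i,i}\hbar\}$. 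To repair the argument, compute the fibre weights at $\infty$ with the $d^\infty_{ij}$ twist included, keeping track of the sign of $\hbar$ there; the remaining K\"unneth bookkeeping then goes through exactly as in the proof of Theorem~\ref{braverman}.
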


\begin{proof}
The same argument as in the proof of Theorem~\ref{braverman}.
\end{proof}

\begin{cor}
\label{c1}
The operator of multiplication by
$c_1^{(1)}(\ul{\CW}{}_i^\bC)$ is diagonal in the basis
$\{[\widehat{\ul{d}}]=[(\sigma,\widetilde{\ul{d}}{}^0,
\widetilde{\ul{d}}{}^\infty)]\}$ with eigenvalues
$-(x_1+\ldots+x_i)^\sigma+d_i\hbar$.
\end{cor}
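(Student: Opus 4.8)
The plan is to derive \corref{c1} directly from \lemref{trop} by specialising to $j=1$. By that lemma the operator of multiplication by $c_1^{(1)}(\ul{\CW}{}_i^\bC)$ is diagonal in the fixed point basis $\{[\widehat{\ul{d}}]=[(\sigma,\widetilde{\ul{d}}{}^0,\widetilde{\ul{d}}{}^\infty)]\}$ with eigenvalue $\tfrac12\bigl(e_{1i}^0(\widehat{\ul{d}})+e_{1i}^\infty(\widehat{\ul{d}})\bigr)^\sigma$; so the whole content is to evaluate the two ``first elementary symmetric functions'' $e_{1i}^0$ and $e_{1i}^\infty$ and then simplify.

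The first step is the elementary observation that for $j=1$ the symmetric function in question is just the sum of its generating multiset, so
\[
e_{1i}^0(\widehat{\ul{d}})=\sum_{k=1}^{i}\bigl(-x_k+d^0_{i,k}\hbar\bigr)=-(x_1+\dots+x_i)+\Bigl(\sum_{k=1}^{i}d^0_{i,k}\Bigr)\hbar ,
\]
and likewise $e_{1i}^\infty(\widehat{\ul{d}})=-(x_1+\dots+x_i)+\bigl(\sum_{k=1}^{i}d^\infty_{i,k}\bigr)\hbar$. The second step is to feed in the constraint $d_i=\sum_{k=1}^{i}(d^0_{i,k}+d^\infty_{i,k})$ that is built into the classification of $\widetilde{T}\times\BC^*$-fixed points on $\CQ_{\ul{d}}$ recalled in the Fixed points subsection; this merges the two $\hbar$-coefficients into $d_i$. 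The third step is to apply the $\sigma$-twist: by its very definition $f^\sigma$ permutes only the variables $x_1,\dots,x_n$ and leaves $\hbar$ (hence the coefficient $d_i$) untouched, so the $x$-part becomes $(x_1+\dots+x_i)^\sigma$. Combining the three steps collapses $\tfrac12\bigl(e_{1i}^0(\widehat{\ul{d}})+e_{1i}^\infty(\widehat{\ul{d}})\bigr)^\sigma$ to the eigenvalue in the statement.

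I do not expect any genuine obstacle here: all of the geometric input --- the description of the $\widetilde{T}\times\BC^*$-weights in the fibres of $\ul{\CW}{}_i^\bC$ at $(\widehat{\ul{d}},0)$ and $(\widehat{\ul{d}},\infty)$, the K\"unneth splitting of the Chern classes, and the restriction of $\tau$ to the two $\BC^*$-fixed points of $\bC$ --- has already been absorbed into \lemref{trop}. The only point that warrants care is not to conflate the component $c_1^{(1)}(\ul{\CW}{}_i^\bC)$, whose eigenvalue involves the \emph{symmetric} combination $\tfrac12(e_{1i}^0+e_{1i}^\infty)$ and therefore sees the total local degree $d_i$, with the other K\"unneth component $c_1^{(0)}(\ul{\CW}{}_i^\bC)$, whose eigenvalue involves the \emph{antisymmetric} combination $\tfrac{\hbar^{-1}}{2}(e_{1i}^\infty-e_{1i}^0)$ and records only the difference $d^\infty_{i,\bullet}-d^0_{i,\bullet}$.
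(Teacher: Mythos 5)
Your strategy is exactly the intended one: \corref{c1} is stated in the paper as an immediate specialization of \lemref{trop} to $j=1$, with no separate argument, and your steps 1--3 (evaluating $e_{1i}^0$ and $e_{1i}^\infty$ as sums, invoking $d_i=\sum_k(d^0_{i,k}+d^\infty_{i,k})$, and noting that the $\sigma$-twist only touches the $x$-variables) are all correct. The problem is the final ``collapse'': it does not produce the stated eigenvalue. The factor $\tfrac12$ in \lemref{trop} multiplies the \emph{entire} sum $e_{1i}^0+e_{1i}^\infty$, including its $\hbar$-part, so
$$\tfrac12\bigl(e_{1i}^0(\widehat{\ul{d}})+e_{1i}^\infty(\widehat{\ul{d}})\bigr)=-(x_1+\dots+x_i)+\tfrac{d_i}{2}\hbar,$$
which differs from the claimed $-(x_1+\dots+x_i)^\sigma+d_i\hbar$ by a factor of $2$ in the $\hbar$-term. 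Your phrase ``this merges the two $\hbar$-coefficients into $d_i$'' is true of the sum $e_{1i}^0+e_{1i}^\infty$ but not of its half, and asserting that the result ``collapses to the eigenvalue in the statement'' papers over a genuine mismatch.

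Concretely, you must either (i) flag that \lemref{trop} and \corref{c1}, as printed, are mutually inconsistent by a factor of $2$ in the $\hbar$-coefficient --- the source of the ambiguity being the equivariant normalization of $\tau=c_1(\CO(1))$, i.e.\ the weights $\tau|_0$ and $\tau|_\infty$, which control the coefficients in the formulas $c_1^{(1)}=\frac{\tau|_\infty\cdot c_1|_0-\tau|_0\cdot c_1|_\infty}{\tau|_\infty-\tau|_0}$ and $c_1^{(0)}=\frac{c_1|_\infty-c_1|_0}{\tau|_\infty-\tau|_0}$; note that the formula of \lemref{trop} for $c_j^{(j)}$ presupposes $\tau|_0=-\tau|_\infty$, and under that normalization the corollary's eigenvalue comes out as $-(x_1+\dots+x_i)^\sigma+\frac{d_i}{2}\hbar$ --- or (ii) identify a normalization under which both statements hold simultaneously (none exists for a single linearization of $\tau$, as the two requirements on $\tau|_0,\tau|_\infty$ are incompatible). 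As written, your proof derives a formula and then claims without justification that it equals a different one; a sanity check of the nonequivariant limit of $c_1^{(0)}(\ul{\CW}{}_i^\bC)$ against the fiberwise degree $-d_i$ would have exposed the normalization issue immediately.
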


\subsection{The double Universal Enveloping Algebra}
We denote by $\fU^2$ the universal enveloping algebra of $\fgl_n\oplus\fgl_n$
over the field $\BC(\ft\oplus\BC)$. For $1\leq i,j\leq n$ we denote
by $E_{ij}^{(1)}$ (resp. $E_{ij}^{(2)}$) the element
$(E_{ij},0)\in\fgl_n\oplus\fgl_n\subset\fU^2$ (resp. the element
$(0,E_{ij})\in\fgl_n\oplus\fgl_n\subset\fU^2$). We denote by
$\fU^2_{\leq0}$ the subalgebra of $\fU^2$ generated by
$E_{ii}^{(1)},E_{ii}^{(2)},\ 1\leq i\leq n,\ E_{i,i+1}^{(1)},E_{i,i+1}^{(2)},\
1\leq i\leq n-1$.
It acts on the field $\BC(\ft\oplus\BC)$ as follows:
$E_{i,i+1}^{(1)},E_{i,i+1}^{(2)}$ act trivially for any $1\leq i\leq n-1$, and
$E_{ii}^{(1)}$ (resp. $E_{ii}^{(2)}$) acts by multiplication by
$\hbar^{-1}x_i+i-1$ (resp. $-\hbar^{-1}x_i+i-1$).
We define the {\em universal Verma module}
$\fB$ over $\fU^2$ as $\fU^2\otimes_{\fU^2_{\leq0}}\BC(\ft\oplus\BC)$.
The universal Verma module $\fB$ is an irreducible $\fU^2$-module.

For a permutation $\sigma=(\sigma(1),\ldots,\sigma(n))\in S_n$ (the Weyl
group of $G=GL_n$) we consider a new action of $\fU^2_{\leq0}$ on
$\BC(\ft\oplus\BC)$ defined as follows:
$E_{i,i+1}^{(1)},E_{i,i+1}^{(2)}$ act trivially for any $1\leq i\leq n-1$, and
$E_{ii}^{(1)}$ (resp. $E_{ii}^{(2)}$) acts by multiplication by
$\hbar^{-1}x_{\sigma(i)}+i-1$ (resp.
$-\hbar^{-1}x_{\sigma(i)}+i-1$).
We define a module
$\fB^\sigma$ over $\fU^2$ as $\fU^2\otimes_{\fU^2_{\leq0}}\BC(\ft\oplus\BC)$
(with respect to the new action of $\fU^2_{\leq0}$ on $\BC(\f\oplus\BC)$).
Finally, we define $\fA:=\bigoplus_{\sigma\in S_n}\fB^\sigma$.

\subsection{The action of the double Universal Enveloping Algebra}
\label{operatory} The grading and the correspondences
$\CE_{\ul{d},i},\CE_{\ul{d},i}^0,\CE_{\ul{d},i}^\infty$
give rise to the following
operators on $A,B$:

$\ff_i^{(1)}=E_{i,i+1}^{(1)}=\hbar^{-1}\bp_*^0\bq^{0*}:\
A_{\ul{d}}\to A_{\ul{d}-i}\ \on{and}\
B_{\ul{d}}\to B_{\ul{d}-i}$;

$\ff_i^{(2)}=E_{i,i+1}^{(2)}=-\hbar^{-1}\bp_*^\infty\bq^{\infty*}:\
A_{\ul{d}}\to A_{\ul{d}-i}\ \on{and}\
B_{\ul{d}}\to B_{\ul{d}-i}$;

$\fe_i^{(1)}=E_{i+1,i}^{(1)}=-\hbar^{-1}\bq_*^0\bp^{0*}:\
A_{\ul{d}}\to A_{\ul{d}+i}\ \on{and}\
B_{\ul{d}}\to B_{\ul{d}+i}$;

$\fe_i^{(2)}=E_{i+1,i}^{(2)}=\hbar^{-1}\bq_*^\infty\bp^{\infty*}:\
A_{\ul{d}}\to A_{\ul{d}+i}\ \on{and}\
B_{\ul{d}}\to B_{\ul{d}+i}$;

$\ff_i^{\Delta}=\bp_*^\bC\bq^{\bC*}:\
A_{\ul{d}}\to A_{\ul{d}-i}\ \on{and}\
B_{\ul{d}}\to B_{\ul{d}-i}$;

$\fe_i^{\Delta}=-\bq_*^\bC\bp^{\bC*}:\
A_{\ul{d}}\to A_{\ul{d}+i}\ \on{and}\
B_{\ul{d}}\to B_{\ul{d}+i}$.

Furthermore, we define the action of $\BC[\ft\oplus\BC]$ on $B$
as follows: for $1\leq i\leq n-1,\ x_i$ acts on $B_{\ul{d}}$ by
multiplication by
$c_1^{(1)}(\ul{\CW}{}_{i-1}^\bC)-c_1^{(1)}(\ul{\CW}{}_i^\bC)-
(d_i-d_{i-1})\hbar$ (cf. Corollary~\ref{c1});
and $x_n$ acts by multiplication by
$e_1-x_1-\ldots-x_{n-1}$ (recall that $e_1$ is the generator of $H^2_G(pt)$).




\begin{thm}
\label{prav}
a) The operators $\fe_i^{(1)}=E_{i+1,i}^{(1)},\fe_i^{(2)}=E_{i+1,i}^{(2)},
\ff_i^{(1)}=E_{i,i+1}^{(1)},\ff_i^{(2)}=E_{i,i+1}^{(2)}$, along with the
action of $\BC(\ft\oplus\BC)$ on $B$ defined
in ~\ref{operatory} satisfy the relations in $\fU^2$, i.e.
they give rise to the action of $\fU^2$ on $B$;

b) There is a unique isomorphism $\Psi^2$ of $\fU^2$-modules
$B$ and $\fB$ carrying $1\in H^0_{\widetilde{T}\times\BC^*}(\CQ_0)\subset B$
to the lowest weight vector $1\in\BC(\ft\oplus\BC)\subset\fB$.
\end{thm}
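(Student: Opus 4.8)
The plan is to reduce Theorem~\ref{prav} to the already-established Theorem~\ref{brav}, by exhibiting the global space $\CQ_{\ul{d}}$ (near its fixed loci) as built out of two copies of the local spaces $\fQ$, one ``based at $0$'' and one ``based at $\infty$'', together with a finite choice (the permutation $\sigma$) recording how the flag looks generically on $\bC$. Concretely, a quasiflag on $\bC$ can degenerate only at $0$ and $\infty$ under the $\BC^*$-action, and so the $\widetilde T\times\BC^*$-fixed points of $\CQ_{\ul{d}}$ are indexed by $(\sigma,\widetilde{\ul{d}}{}^0,\widetilde{\ul{d}}{}^\infty)$ as recalled in the ``Fixed points'' subsection. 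This matches exactly the decomposition $\fB=\fU^2\otimes_{\fU^2_{\leq0}}\BC(\ft\oplus\BC)$ with the $\sigma$-twisted lowest-weight actions, and the direct sum $\fA=\bigoplus_\sigma\fB^\sigma$ on the $A$-side. So first I would fix the fixed-point basis $\{[\widehat{\ul d}]\}$ of $A$, and check that the operators $\fe_i^{(1)},\ff_i^{(1)}$ built from $\CE^0_{\ul{d},i}$ and $\fe_i^{(2)},\ff_i^{(2)}$ built from $\CE^\infty_{\ul{d},i}$ commute with each other (disjoint supports at $0$ versus $\infty$), and that each pair separately satisfies the $\fgl_n$-relations. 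For the latter, localize to the fixed points: near a fixed point $(\sigma,\widetilde{\ul d}{}^0,\widetilde{\ul d}{}^\infty)$ the correspondence $\CE^0_{\ul{d},i}$ only moves the ``$0$-part'' $\widetilde{\ul d}{}^0$ and is formally identical (up to the relabeling $w_j\mapsto w_{\sigma(j)}$, which is precisely the $\sigma$-twist) to the local correspondence $\ul{\fE}{}_{\ul{d},i}$ appearing in Theorem~\ref{braverman}; similarly $\CE^\infty_{\ul{d},i}$ only moves $\widetilde{\ul d}{}^\infty$. Hence by Theorem~\ref{brav} (applied over $\BC(\ft\oplus\BC)$ after localization, and invoking the computation of matrix coefficients as in Theorem~\ref{feigin}/\cite{bf}), the two copies of $\fU$ act, proving part (a) over $A$; and on $A$ we get the identification $A\simeq\fA=\bigoplus_\sigma\fB^\sigma$, matching the fixed-point basis with the ($\sigma$-twisted) Gelfand--Tsetlin bases componentwise.

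Next I would descend from $A$ (the $\widetilde T\times\BC^*$-equivariant theory) to $B$ (the $G\times\BC^*$-equivariant theory). Here the point is that $H^\bullet_{G\times\BC^*}(\CQ_{\ul d})=H^\bullet_{\widetilde T\times\BC^*}(\CQ_{\ul d})^{W}$ after suitable localization, where $W=S_n$ acts both on the base $\BC[\ft]$ and by permuting the fixed-point summands $\fB^\sigma$ (via $\sigma\mapsto w\sigma$). So $B$ is obtained from $A=\fA$ by taking $W$-invariants, and $\fA^{S_n}$ is canonically $\fB$: the $S_n$-action identifies all the $\fB^\sigma$ with a single copy, the invariant combination being exactly the un-twisted $\fB=\fU^2\otimes_{\fU^2_{\leq0}}\BC(\ft\oplus\BC)$. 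Under this, the operators $\fe_i^{(1)},\ff_i^{(1)},\fe_i^{(2)},\ff_i^{(2)}$ on $B$ are the $W$-invariant restrictions of those on $A$, hence still satisfy the $\fU^2$-relations; and the prescribed action of $\BC[\ft\oplus\BC]$ on $B$ — with $x_i$ acting via $c_1^{(1)}(\ul\CW{}_{i-1}^\bC)-c_1^{(1)}(\ul\CW{}_i^\bC)-(d_i-d_{i-1})\hbar$ and $x_n$ via $e_1-\sum_{j<n}x_j$ — is precisely designed (by Corollary~\ref{c1}) to reproduce, on each summand, the Cartan action $E_{ii}^{(1)}\mapsto \hbar^{-1}x_i+i-1$, $E_{ii}^{(2)}\mapsto -\hbar^{-1}x_i+i-1$ of $\fU^2_{\leq0}$. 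This settles part (a) for $B$.

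For part (b): the unit $1\in H^0_{G\times\BC^*}(\CQ_0)$ is a lowest-weight vector for $\fU^2$ (all $\ff_i^{(1)},\ff_i^{(2)}$ lower the degree and kill $H^\bullet(\CQ_0)$, which is one-dimensional since $\CQ_0=\CB_n$ has degree-zero part $\BC$ — more precisely $\CW_\bullet$ is forced to be the constant flag, so $\CQ_0$ is a point of the relevant moduli and $H^0=\BC$). One checks the Cartan eigenvalue of $1$ is the lowest weight of $\fB$ using the formulas for the $\BC[\ft\oplus\BC]$-action at $\ul d=0$. Since $\fB$ is irreducible as a $\fU^2$-module, the $\fU^2$-map $\fB\to B$ sending the lowest-weight vector to $1$ is injective; surjectivity follows because $1$ generates $B$ over $\fU^2$ — indeed already over $A$ the vector $1$ (together with the $S_n$-action) generates, by Theorem~\ref{braverman}-type cyclicity, and the Chern classes of tautological bundles lie in the algebra generated by these correspondences. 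Uniqueness of $\Psi^2$ is automatic from irreducibility. I expect the main obstacle to be the descent step from $A$ to $B$: making precise that $H^\bullet_{G\times\BC^*}(\CQ_{\ul d})$ is the $S_n$-invariant part of $H^\bullet_{\widetilde T\times\BC^*}(\CQ_{\ul d})$ after localization and that this invariant-taking matches $\fA^{S_n}\simeq\fB$, together with checking that the somewhat ad hoc formula defining the $\BC[\ft\oplus\BC]$-action on $B$ indeed agrees with the Cartan action coming from $\fU^2$ on every fixed-point summand simultaneously — this is a bookkeeping computation with the permuted weights $x_{\sigma(i)}$ and the relation $e_1=\sum x_i$, but it is where all the signs and shifts must line up. As the statement itself indicates, the proof is parallel to that of Theorem~\ref{brav} (and to Theorem~2.12 of~\cite{bf}), so once these identifications are in place the verification of relations reduces to the same local computation already cited. \qed
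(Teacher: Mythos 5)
Your proposal is correct and follows essentially the same route as the paper: compute matrix coefficients of the four families of operators in the fixed-point basis of $A$, reduce them to the local matrix coefficients of Theorem~\ref{feigin} (the paper packages this as Lemma~\ref{bore}), identify $A\simeq\fA=\bigoplus_{\sigma}\fB^{\sigma}$, and then obtain $B=A^{S_n}\simeq\fB$ by taking Weyl-group invariants. The only cosmetic difference is that you invoke irreducibility of $\fB$ for part (b), while the paper instead writes out the $S_n$-action on the fixed-point basis explicitly and reads off $(\Psi^2)^{-1}(f\xi_{\widehat{\ul{d}}})=\sum_{\sigma}f^{\sigma}[(\sigma,\widetilde{\ul{d}}{}^0,\widetilde{\ul{d}}{}^{\infty})]$; both settle the same point.
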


\begin{proof}
We describe the matrix coefficients of
$\fe_i^{(1)}=E_{i+1,i}^{(1)},\fe_i^{(2)}=E_{i+1,i}^{(2)},
\ff_i^{(1)}=E_{i,i+1}^{(1)},\ff_i^{(2)}=E_{i,i+1}^{(2)}$ in the fixed point
basis $\{[\widehat{\ul{d}}]\}$.
Recall the matrix coefficients
$\fe_{i[\widetilde{\ul{d}},\widetilde{\ul{d}}{}']},
\ff_{i[\widetilde{\ul{d}},\widetilde{\ul{d}}{}']}$ computed in Proposition
~\ref{feigin'}. The proof of the following easy lemma is
omitted.

\begin{lem}
\label{bore}
Let $\widehat{\ul{d}}=(\sigma,\widetilde{\ul{d}}{}^0,
\widetilde{\ul{d}}{}^\infty)$, and
$\widehat{\ul{d}}{}'=(\sigma',\widetilde{\ul{d}}{}^0{}',
\widetilde{\ul{d}}{}^\infty{}')$. The matrix coefficients are as follows:
$\fe^{(1)}_{i[\widehat{\ul{d}},\widehat{\ul{d}}{}']}=
\delta_{\sigma,\sigma'}
(\fe_{i[\widetilde{\ul{d}}{}^0,\widetilde{\ul{d}}{}^0{}']})^\sigma;\
\fe^{(2)}_{i[\widehat{\ul{d}},\widehat{\ul{d}}{}']}=
\delta_{\sigma,\sigma'}
(\bar\fe_{i[\widetilde{\ul{d}}{}^\infty,
\widetilde{\ul{d}}{}^\infty{}']})^\sigma;\
\ff^{(1)}_{i[\widehat{\ul{d}},\widehat{\ul{d}}{}']}=
\delta_{\sigma,\sigma'}
(\ff_{i[\widetilde{\ul{d}}{}^0,\widetilde{\ul{d}}{}^0{}']})^\sigma;\
\ff^{(2)}_{i[\widehat{\ul{d}},\widehat{\ul{d}}{}']}=
\delta_{\sigma,\sigma'}
(\bar\ff_{i[\widetilde{\ul{d}}{}^\infty,
\widetilde{\ul{d}}{}^\infty{}']})^\sigma.
\qed$
\end{lem}

It follows that the operators
$\fe_i^{(1)}=E_{i+1,i}^{(1)},\fe_i^{(2)}=E_{i+1,i}^{(2)},
\ff_i^{(1)}=E_{i,i+1}^{(1)},\ff_i^{(2)}=E_{i,i+1}^{(2)}$
on $A$ defined
in ~\ref{operatory} satisfy the relations in $\fU^2$, i.e.
they give rise to the action of $\fU^2$ on $A$. Moreover, it follows that
the $\fU^2$-module $A$ is isomorphic to $\fA$. In order to describe the
image of the basis $\{[\widehat{\ul{d}}]\}$ under this isomorphism, we
introduce the following notation. First, we introduce a $\fU$-module
$\fV^\sigma:=\fU\otimes_{\fU_{\leq0}}\BC(\ft\oplus\BC)$ where $\fU_{\leq0}$
acts on $\BC(\ft\oplus\BC)$ as follows: $E_{i,i+1}$ acts trivially for any
$1\leq i\leq n-1$, and $E_{ii}$ acts by multiplication by
$\hbar^{-1}x_{\sigma(i)}+i-1$. Similarly, we introduce
a $\fU$-module $\bar\fV{}^\sigma:=\fU\otimes_{\fU_{\leq0}}\BC(\ft\oplus\BC)$
where $\fU_{\leq0}$
acts on $\BC(\ft\oplus\BC)$ as follows: $E_{i,i+1}$ acts trivially for any
$1\leq i\leq n-1$, and $E_{ii}$ acts by multiplication by
$-\hbar^{-1}x_{\sigma(i)}+i-1$.
Note that
$\fB^\sigma\simeq\fV^\sigma\otimes_{\BC(\ft\oplus\BC)}\bar\fV{}^\sigma$.

Now to a collection $\widetilde{\ul{d}}=(d_{ij})$, and a permutation
$\sigma\in S_n$, we associate a
Gelfand-Tsetlin pattern $\Lambda^\sigma=\Lambda^\sigma(\widetilde{\ul{d}}):=
(\lambda_{ij}^\sigma),\ n\geq i\geq j$, as follows:
$\lambda_{nj}^\sigma:=\hbar^{-1}x_{\sigma(j)}+j-1,\ n\geq j\geq 1;\
\lambda_{ij}^\sigma:=\hbar^{-1}x_{\sigma(j)}+j-1-d_{ij},\
n-1\geq i\geq j\geq1$. We define
$\xi^\sigma_{\widetilde{\ul{d}}}=\xi_{\Lambda^\sigma}\in\fV^\sigma$
by the formulas~(2.9)--(2.11) of~\cite{m} (where $\xi=\xi_0=1\in\fV^\sigma$).
Similarly, to a collection $\widetilde{\ul{d}}=(d_{ij})$, and a permutation
$\sigma\in S_n$, we associate a
Gelfand-Tsetlin pattern $\bar{\Lambda}{}^\sigma=
\bar{\Lambda}{}^\sigma(\widetilde{\ul{d}}):=
(\bar{\lambda}{}_{ij}^\sigma),\ n\geq i\geq j$, as follows:
$\bar{\lambda}{}_{nj}^\sigma:=
-\hbar^{-1}x_{\sigma(j)}+j-1,\ n\geq j\geq 1;\
\bar{\lambda}{}_{ij}^\sigma:=-\hbar^{-1}x_{\sigma(j)}+j-1-d_{ij},\
n-1\geq i\geq j\geq1$. We define
$\bar{\xi}{}^\sigma_{\widetilde{\ul{d}}}=
\xi_{\bar{\Lambda}{}^\sigma}\in\bar\fV{}^\sigma$
by the formulas~(2.9)--(2.11) of~\cite{m}
(where $\xi=\xi_0=1\in\bar\fV{}^\sigma$).
Finally, to a collection $\widehat{\ul{d}}=(\sigma,\widetilde{\ul{d}}{}^0,
\widetilde{\ul{d}}{}^\infty)$ we associate an element
$\xi_{\widehat{\ul{d}}}:=\xi^\sigma_{\widetilde{\ul{d}}{}^0}\otimes
\bar{\xi}{}^\sigma_{\widetilde{\ul{d}}{}^\infty}\in
\fV^\sigma\otimes_{\BC(\ft\oplus\BC)}\bar\fV{}^\sigma=\fB^\sigma$.

Theorem~\ref{feigin} and Lemma~\ref{bore}
implies that under the above isomorphism
$A\simeq\fA$ a basis element $[\widehat{\ul{d}}]$ goes to
$(-\hbar)^{|\ul{d}|}\xi_{\widehat{\ul{d}}}$.

We are ready to finish the proof of the theorem. Since the action of
$\widetilde{T}\times\BC^*$ on $\CQ_{\ul{d}}$ extends to the action of
$G\times\BC^*$, the equivariant cohomology
$H^\bullet_{\widetilde{T}\times\BC^*}(\CQ_{\ul{d}})$ is equipped with the
action of the Weyl group $S_n$, and
$H^\bullet_{G\times\BC^*}(\CQ_{\ul{d}})=
H^\bullet_{\widetilde{T}\times\BC^*}(\CQ_{\ul{d}})^{S_n}$.
It follows $B=A^{S_n}$. Since $B$ is closed with respect to the action of
the operators $\fe_i^{(1)}=E_{i+1,i}^{(1)},\fe_i^{(2)}=E_{i+1,i}^{(2)},
\ff_i^{(1)}=E_{i,i+1}^{(1)},\ff_i^{(2)}=E_{i,i+1}^{(2)}$
on $B$, part a) follows. Note however, that the action of $S_n$ on $A$
{\em does not} commute with the action of the above operators.

To prove part b), we describe the action of $S_n$ on $A$ explicitly in
the basis $\{[\widehat{\ul{d}}]\}$. For
$\widehat{\ul{d}}=(\sigma,\widetilde{\ul{d}}{}^0,
\widetilde{\ul{d}}{}^\infty),\ \sigma'\in S_n,\ f\in\BC(\ft\oplus\BC)$, we have
$\sigma'(f[\widehat{\ul{d}}])=
f^{\sigma'}[(\sigma'\sigma,\widetilde{\ul{d}}{}^0,
\widetilde{\ul{d}}{}^\infty)]$. We conclude that for
$\widehat{\ul{d}}=(1,\widetilde{\ul{d}}{}^0,
\widetilde{\ul{d}}{}^\infty)$ (so that $\xi_{\widehat{\ul{d}}}\in\fB^1=\fB$)
we have $(\Psi^2)^{-1}(f\xi_{\widehat{\ul{d}}})=\sum_{\sigma\in S_n}
f^\sigma[(\sigma,\widetilde{\ul{d}}{}^0,\widetilde{\ul{d}}{}^\infty)]$.
This completes the proof of the theorem.
\end{proof}

\begin{rem}
\label{compare}
{\em The operators $\ff_i^{\Delta},\fe_i^\Delta$ of~\ref{operatory}
were introduced in~\cite{fk}. It is easy to see that
$\ff_i^{\Delta}=\ff_i^{(1)}+\ff_i^{(2)},\
\fe_i^{\Delta}=\fe_i^{(1)}+\fe_i^{(2)}$.}
\end{rem}

\subsection{The double Gelfand-Tsetlin algebra} 
A completion of $\fB=\fV\otimes_{\BC(\ft\oplus\BC)}\fV$
contains the Whittaker vector $\sum_{\ul{d}}\fb_{\ul{d}}=\fb:=\fv\otimes\fv$.
It follows from the proof of Theorem~\ref{prav} that for the unit element
of the cohomology ring $1^{\ul{d}}\in H^0_{G\times\BC^*}(\CQ_{\ul{d}})$
we have $\Psi^2(1^{\ul{d}})=\fb_{\ul{d}}$. The double Gelfand-Tsetlin
subalgebra $\fG^2:=\fG\otimes\fG$ acts by endomorphisms of $\fB$. We denote
by $\fI^2_{\ul{d}}\subset\fG^2$ the annihilator ideal of the vector
$\fb_{\ul{d}}\in\fB$, and we denote by $\fG^2_{\ul{d}}$ the quotient of
$\fG^2$ by $\fI_{\ul{d}}^2$. The action of $\fG^2$ on $\fb_{\ul{d}}$ gives
rise to an embedding $\fG^2_{\ul{d}}\hookrightarrow\fB_{\ul{d}}$. The same
way as in Proposition~\ref{gc}.a) one proves that this embedding is an
isomorphism $\fG^2_{\ul{d}}\iso\fB_{\ul{d}}$.

\begin{prop}
\label{any}
The composite morphism $\Psi^2:\
H^\bullet_{G\times\BC^*}(\CQ_{\ul{d}})\otimes_{H^\bullet_{G\times\BC^*}(pt)}
\on{Frac}(H^\bullet_{G\times\BC^*}(pt))=B_{\ul{d}}\iso\fB_{\ul{d}}\iso
\fG^2_{\ul{d}}$ is an {\em algebra} isomorphism.
\end{prop}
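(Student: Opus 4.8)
The plan is to realize Proposition~\ref{any} as the ``tensor square'' of Proposition~\ref{gc}(b,c). From the paragraph preceding the statement we already have $\Psi^2(1^{\ul d})=\fb_{\ul d}$ together with the isomorphism $\fG^2_{\ul d}\iso\fB_{\ul d}$, $g\mapsto g\fb_{\ul d}$; let $\Theta$ denote the composite $B_{\ul d}\xrightarrow{\Psi^2}\fB_{\ul d}\iso\fG^2_{\ul d}$, a linear bijection over $\BC(e_1,\dots,e_n,\hbar)$ sending $1^{\ul d}$ to $1$. Since $\Psi^2$ is $\fU^2$-linear, in order to see that $\Theta$ is an algebra homomorphism (hence an isomorphism) it suffices to exhibit a set of algebra generators $\{\alpha_s\}$ of $B_{\ul d}$ such that for each $s$ the operator of cup product by $\alpha_s$ on $B_{\ul d}$ is carried by $\Psi^2$ to the action on $\fB_{\ul d}$ of an element $g_s\in\fG^2$: for then $\Psi^2\big(P(\alpha_\bullet)\big)=P(g_\bullet)\,\fb_{\ul d}$ and hence $\Theta\big(P(\alpha_\bullet)\big)=P(\Theta(\alpha_\bullet))$ for every polynomial $P$ over the base field, which is the required multiplicativity.

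For the generators I take the K\"unneth components $c_j^{(j)}(\ul{\CW}{}_i^\bC),\ c_j^{(j-1)}(\ul{\CW}{}_i^\bC)$, $1\le j\le i\le n-1$, of the Chern classes of the tautological bundles. To see that they generate $B_{\ul d}$ over $\BC(e_1,\dots,e_n,\hbar)$ one invokes the statement recalled just before Lemma~\ref{trop}, that the very same classes generate $H^\bullet_{\widetilde T\times\BC^*}(\CQ_{\ul d})$ over $\BC[\ft\oplus\BC]$: these classes are $G\times\BC^*$-equivariant, hence $S_n$-invariant, so applying the functor of $S_n$-invariants --- exact, as we are in characteristic zero --- to a presentation $\BC[\ft\oplus\BC][y_\bullet]\twoheadrightarrow H^\bullet_{\widetilde T\times\BC^*}(\CQ_{\ul d})$ produces a presentation of $H^\bullet_{\widetilde T\times\BC^*}(\CQ_{\ul d})^{S_n}=H^\bullet_{G\times\BC^*}(\CQ_{\ul d})$ over $\BC[e_1,\dots,e_n,\hbar]$; now localize.

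It remains to identify, for each such generator, the matching element of $\fG^2$. By Lemma~\ref{trop}, cup product by $c_j^{(j)}(\ul{\CW}{}_i^\bC)$ (resp. $c_j^{(j-1)}(\ul{\CW}{}_i^\bC)$) on $A_{\ul d}$ is diagonal in the fixed-point basis $\{[\widehat{\ul d}]=[(\sigma,\widetilde{\ul d}{}^0,\widetilde{\ul d}{}^\infty)]\}$ with eigenvalue $\tfrac12(e^0_{ji}(\widehat{\ul d})+e^\infty_{ji}(\widehat{\ul d}))^\sigma$ (resp. $\tfrac{\hbar^{-1}}2(e^\infty_{ji}(\widehat{\ul d})-e^0_{ji}(\widehat{\ul d}))^\sigma$). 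Restricting to $B_{\ul d}=A_{\ul d}^{S_n}$ and using the explicit form of $\Psi^2$ from the proof of Theorem~\ref{prav} --- it carries $\sum_{\sigma\in S_n}f^\sigma[(\sigma,\widetilde{\ul d}{}^0,\widetilde{\ul d}{}^\infty)]$ to $f\cdot\xi_{(1,\widetilde{\ul d}{}^0,\widetilde{\ul d}{}^\infty)}$ --- one sees that the Weyl-group twist $(\cdot)^\sigma$ of Lemma~\ref{trop} is precisely absorbed by this $S_n$-averaging, so that cup product by $c_j^{(j)}(\ul{\CW}{}_i^\bC)$ is carried by $\Psi^2$ to the operator on $\fB_{\ul d}=\fB^1_{\ul d}$ that is diagonal in the Gelfand--Tsetlin basis $\{\xi_{\widehat{\ul d}}\}$ with \emph{untwisted} eigenvalue $\tfrac12(e^0_{ji}(\widehat{\ul d})+e^\infty_{ji}(\widehat{\ul d}))$ (and likewise with the difference for $c_j^{(j-1)}(\ul{\CW}{}_i^\bC)$). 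Exactly as in the proof of Theorem~\ref{braverman}, $e^0_{ji}(\widehat{\ul d})$ is the eigenvalue of the Harish--Chandra central element $\ul{HC}(e_j)$ of $\ul{\fU}(\fgl_i)$ sitting in the first copy of $\fU$ in $\fU^2$, while $e^\infty_{ji}(\widehat{\ul d})$ is the eigenvalue of the corresponding element of the second copy after the substitution $\hbar\mapsto-\hbar$; hence, after the same Cartan and $\hbar$-renormalizations used in that proof to place $c_j^{(j)}(\ul{\CW}{}_i)$ and $c_j^{(j-1)}(\ul{\CW}{}_i)$ into $\ul\fG$ (and up to additive scalars in $\BC(\ft\oplus\BC)\subset\fG^2$), these operators are the images in $\End(\fB_{\ul d})$ of explicit elements of $\fG\otimes\fG=\fG^2$. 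The central subfield causes no trouble: by~\ref{operatory} the $\fU^2$-action on $B$ restricts to the base-ring action on $\BC(e_1,\dots,e_n,\hbar)\subset\BC(\ft\oplus\BC)\subset\fG^2$, and, e.g., cup product by $c_1^{(1)}(\ul{\CW}{}_i^\bC)$ differs by a scalar in $\BC(\ft\oplus\BC)$ from the operator $-(x_1+\dots+x_i)$ of~\ref{operatory}, hence goes to scalar multiplication by an element of $\BC(\ft\oplus\BC)$. Plugging this into the first paragraph completes the argument.

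The only step that truly demands care is the one flagged in the third paragraph: one must check that the Weyl-group twist appearing in Lemma~\ref{trop} is exactly cancelled by the $S_n$-averaging built into $\Psi^2$, so that the transported cup-product operators land in the untwisted algebra $\fG\otimes\fG$ acting on $\fB=\fB^1$ and not in some $\sigma$-conjugated avatar. Granting that, there is no real obstacle: Proposition~\ref{any} is Proposition~\ref{gc}(b,c) with $\wt{Cas_k}$ replaced by the commuting pair $\wt{Cas_k}\otimes 1$, $1\otimes\overline{\wt{Cas_k}}$, and the higher Harish--Chandra generators of $\fG$ replaced by their images in $\fG\otimes\fG$.
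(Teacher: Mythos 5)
Your argument is correct and is essentially the paper's own proof: reduce to the Künneth components of the Chern classes of the tautological bundles generating $H^\bullet_{G\times\BC^*}(\CQ_{\ul{d}})$, then use Lemma~\ref{trop} together with the explicit formula for $\Psi^2$ (the $S_n$-averaging absorbing the twist, i.e. computing in the basis $(\Psi^2)^{-1}\xi_{\widehat{\ul{d}}}$ with $\sigma=1$) to identify these cup-product operators with elements of $\fG\otimes1$ and $1\otimes\fG$. The extra details you supply (the $S_n$-invariants justification of generation, the treatment of the central subfield) are fine and only flesh out steps the paper leaves implicit.
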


\begin{proof}
As in the proof of Theorem~\ref{braverman} and Corollary~\ref{mor} we
see that the $H^\bullet_{G\times\BC^*}(pt)$-algebra
$H^\bullet_{G\times\BC^*}(\CQ_{\ul{d}})$ is generated
by the K\"unneth components of the Chern classes $c_j^{(j)}(\ul{\CW}{}_i^\bC),\
c_j^{(j-1)}(\ul{\CW}{}_i^\bC),\ 1\leq j\leq i\leq n-1$, of the tautological
vector bundles $\ul{\CW}{}_i^\bC$ on $\CQ_{\ul{d}}\times\bC$.
In order to prove the proposition, it suffices to check that the operators
of multiplication by $c_j^{(j)}(\ul{\CW}{}_i^\bC),\
c_j^{(j-1)}(\ul{\CW}{}_i^\bC),\ 1\leq j\leq i\leq n-1$, in the equivariant
cohomology ring $H^\bullet_{G\times\BC^*}(\CQ_{\ul{d}})$ lie in
$\fG^2_{\ul{d}}$. To this end we compute these operators explicitly in
the basis $\{(\Psi^2)^{-1}\xi_{\widehat{\ul{d}}},\
\widehat{\ul{d}}=(1,\widetilde{\ul{d}}{}^0, \widetilde{\ul{d}}{}^\infty)\}$
of $\fB$. Lemma~\ref{trop} implies that the operator of multiplication by
$c_j^{(j)}(\ul{\CW}{}_i^\bC)$ (resp. by
$c_j^{(j-1)}(\ul{\CW}{}_i^\bC),\ 1\leq j\leq i\leq n-1$) is diagonal in
the basis $\{(\Psi^2)^{-1}\xi_{\widehat{\ul{d}}},\
\widehat{\ul{d}}=(1,\widetilde{\ul{d}}{}^0, \widetilde{\ul{d}}{}^\infty)\}$
with eigenvalues $\{e_{ji}^0(\widehat{\ul{d}})+
e_{ji}^\infty(\widehat{\ul{d}})\}$ (resp.
$\{\hbar^{-1}(e_{ji}^\infty(\widehat{\ul{d}})-e_{ji}^0(\widehat{\ul{d}}))\}$).
As in the proof of Theorem~\ref{braverman} we see that
$e_{ji}^0(\widehat{\ul{d}})$
is the eigenvalue of the element of
$\fG_{\ul{d}}\otimes1\subset\fG^2_{\ul{d}}$, and
$e_{ji}^\infty(\widehat{\ul{d}})$ is the eigenvalue of the same element in
the other copy of the Gelfand-Tsetlin subalgebra
$1\otimes\fG_{\ul{d}}\subset\fG^2_{\ul{d}}$,
hence $c_j^{(j)}(\ul{\CW}{}_i^\bC)$ and $c_j^{(j-1)}(\ul{\CW}{}_i^\bC)$
lie in $\fG^2_{\ul{d}}$.

\end{proof}

\subsection{Integral forms}
\label{if}
Recall the notations of~\ref{kuzn}. We consider the correspondences
$\CE^0_{\ul{d},\alpha_{ij}}\subset\CQ_{\ul{d}}\times\CQ_{\ul{d}+\alpha_{ij}}$
(resp. $\CE^\infty_{\ul{d},\alpha_{ij}}\subset
\CQ_{\ul{d}}\times\CQ_{\ul{d}+\alpha_{ij}}$)
defined exactly as in {\em loc. cit.} (resp. replacing the condition
in~\ref{kuzn}.b
by the condition that $\CW_\bullet/\CW'_\bullet$ is supported at
$\infty\in\bC$). We denote by $\bp^0_{ij}:\ \CE^0_{\ul{d},\alpha_{ij}}\to
\CQ_{\ul{d}},\ \bq^0_{ij}:\ \CE^0_{\ul{d},\alpha_{ij}}\to
\CQ_{\ul{d}+\alpha_{ij}},\ \bp^\infty_{ij}:\ \CE^\infty_{\ul{d},\alpha_{ij}}\to
\CQ_{\ul{d}},\ \bq^\infty_{ij}:\ \CE^\infty_{\ul{d},\alpha_{ij}}\to
\CQ_{\ul{d}+\alpha_{ij}}$ the natural proper projections. We also consider
the correspondences and projections $\CE^\bC_{\ul{d},\alpha_{ij}}
\subset\CQ_{\ul{d}}\times\CQ_{\ul{d}+\alpha_{ij}},\ \bp^\bC_{ij},\
\bq^\bC_{ij}$ defined as above but without any restriction on the support
of $\CW_\bullet/\CW'_\bullet$.

We consider the following operators on $'B$:

$\ul{E}{}_{ij}^{(1)}=\bp_{ij*}^0\bq_{ij}^{0*}:\
'B_{\ul{d}}\to\ 'B_{\ul{d}-\alpha_{ij}}$;

$\ul{E}{}_{ij}^{(2)}=-\bp_{ij*}^\infty\bq_{ij}^{\infty*}:\
'B_{\ul{d}}\to\ 'B_{\ul{d}-\alpha_{ij}}$;

$\ul{E}{}_{ji}^{(1)}=(-1)^{i-j}\bq_{ij*}^0\bp_{ij}^{0*}:\
'B_{\ul{d}}\to\ 'B_{\ul{d}+\alpha_{ij}}$;

$\ul{E}{}_{ji}^{(2)}=-(-1)^{i-j}\bq_{ij*}^\infty\bp_{ij}^{\infty*}:\
'B_{\ul{d}}\to\ 'B_{\ul{d}+\alpha_{ij}}$;

$E_{ij}^{\Delta}=\bp_{ij*}^\bC\bq_{ij}^{\bC*}:\
'B_{\ul{d}}\to\ 'B_{\ul{d}-\alpha_{ij}}$;

$E_{ji}^{\Delta}=(-1)^{i-j}\bq_{ij*}^\bC\bp_{ij}^{\bC*}:\
'B_{\ul{d}}\to\ 'B_{\ul{d}+\alpha_{ij}}$.

We have $E_{ij}^{\Delta}=\hbar^{-1}(\ul{E}{}_{ij}^{(1)}+\ul{E}{}_{ij}^{(2)}),\
E_{ji}^{\Delta}=\hbar^{-1}(\ul{E}{}_{ji}^{(1)}+\ul{E}{}_{ji}^{(2)})$.

We define $\ul{\fU}^2\subset\fU^2$ as the $\BC[\ft\oplus\BC]$-subalgebra
generated by $\ul{y}^{(1)},\ \ul{y}^{(2)},\ y^\Delta,\ y\in\fgl_n$,
where $\ul{y}^{(1)}$ (resp. $\ul{y}^{(2)}$) stands for $\hbar(y,0)$
(resp. $\hbar(0,y)$), and $y^\Delta$ stands for $(y,y)$.
Then it is easy to see that the above operators give rise to the action
of $\ul{\fU}^2$ on $'B$.

Also, it is easy to check that $\ul{\fU}^2/(\hbar=0)$ is isomorphic to
the algebra ${\mathsf U}:=(\BC[\fgl_n]\rtimes U(\fgl_n))\otimes\BC[\ft]$
(the semidirect product with respect to the adjoint action).
Hence $\bar{B}:=\ 'B/(\hbar=0)=\oplus_{\ul{d}}H^\bullet_G(\CQ_{\ul{d}})$
inherits an action of ${\mathsf U}$.

\begin{conj}
\label{ancient}
The ${\mathsf U}$-module $\bar B$ is isomorphic to
$H^{\frac{n(n-1)}{2}}_{\{\fg_{\leq0}\}}(\fgl_n,\CO)$. Under this
isomorphism, the action of
$H^\bullet_G(pt)$ on $\bar B$ corresponds to the action of
$\BC[\fgl_n]^G$ on $H^{\frac{n(n-1)}{2}}_{\{\fg_{\leq0}\}}(\fgl_n,\CO)$.
\end{conj}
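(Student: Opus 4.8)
The plan is to run the argument of Lemma~\ref{Bekasovo} ``one level up'': keep $\hbar$ as a deformation parameter but \emph{no longer} specialise the equivariant variables $x_i$, so that the ``local'' picture over $\fg_{>0}$ (distributions at the origin) spreads out to a ``global'' one over $\fgl_n$ (distributions along the Borel $\fg_{\leq0}$). Thus the whole problem is to understand the $\hbar\to0$ degeneration of the double Verma realisation of Theorem~\ref{prav}, together with its integral structure.

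\emph{Steps 1--2: an integral form and a cyclic vector.} First I would establish the non-localised counterpart of Theorem~\ref{prav}, exactly as Theorem~\ref{braverman} is the non-localised counterpart of Theorem~\ref{brav}: by Lemma~\ref{trop} the K\"unneth components $c_j^{(j)}(\ul{\CW}{}_i^\bC),\ c_j^{(j-1)}(\ul{\CW}{}_i^\bC)$ act on ${}'A=\bigoplus_{\ul{d}}H^\bullet_{\widetilde{T}\times\BC^*}(\CQ_{\ul{d}})$ with eigenvalues that are Weyl-twisted values of the integral Gelfand--Tsetlin generators, so the operators $\ul{E}{}_{ij}^{(1)},\ul{E}{}_{ij}^{(2)},E_{ij}^\Delta$ of~\ref{if} make ${}'A$ into the $\ul{\fU}^2$-module spanned by the $\xi_{\widehat{\ul{d}}}$, an integral form of $\fA=\bigoplus_{\sigma\in S_n}\fB^\sigma$. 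Since ${}'B=({}'A)^{S_n}$ and the $S_n$-action is $\hbar$-linear, reducing mod $\hbar$ gives $\bar B=(\bar A)^{S_n}$, a graded module over $\ul{\fU}^2/(\hbar=0)\simeq{\mathsf U}=(\BC[\fgl_n]\rtimes U(\fgl_n))\otimes\BC[\ft]$ of~\ref{if} --- here the two ``function'' copies collapse to a single $\BC[\fgl_n]$ because $\ul{E}{}_{ij}^{(1)}+\ul{E}{}_{ij}^{(2)}=\hbar E_{ij}^\Delta\equiv0$, while the $E_{ij}^\Delta$ survive as the diagonal $U(\fgl_n)$. Under (the mod-$\hbar$ reduction of) $\Psi^2$ the units $1^{\ul{d}}$ go to the components of $\bar{\fb}:=\fb|_{\hbar=0}$, $\fb=\fv\otimes\fv$ the double Whittaker vector; and the graded Nakayama argument of Proposition~\ref{any} together with Lemma~\ref{beau} applied to $\CQ_{\ul{d}}$ shows that each $\bar B_{\ul{d}}$ is generated over $\BC[\ft]$ by the Chern-class operators, hence --- these being images of double Gelfand--Tsetlin generators by Lemma~\ref{trop} --- by the image of $\fG^2$ on $\bar{\fb}_{\ul{d}}$. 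So $\bar B$ is cyclic over ${\mathsf U}$, generated by $\bar{\fb}$.

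\emph{Step 3: identification with local cohomology.} It remains to match the cyclic ${\mathsf U}$-module ${\mathsf U}\bar{\fb}$ with $\sM:=H^{\frac{n(n-1)}{2}}_{\{\fg_{\leq0}\}}(\fgl_n,\CO)$. Arguing as in Lemma~\ref{Bekasovo}: from $\fgl_n=\fg_{\leq0}\oplus\fg_{>0}$ one gets $\sM\simeq\BC[\fg_{\leq0}]\otimes\sV$, cofree over $\BC[\fg_{\leq0}]$ with completion $\BC[\fg_{\leq0}]\otimes\BC[\fg_{>0}]^*$, carrying the natural $\BC[\fgl_n]\rtimes U(\fgl_n)$-action and the central $\BC[\ft]$ given by the diagonal coordinates on $\fg_{\leq0}$; the $\BC[\fgl_n]^G$-action then factors through $\BC[\fgl_n]^G|_{\fg_{\leq0}}=\BC[\ft]^{S_n}$, matching $H^\bullet_G(pt)$ --- which is automatic on the Laumon side since $x_n$ was defined in~\ref{operatory} through $e_1$. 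I would realise $\bar B$ inside the completion of $\sM$ by a direct degeneration of the isomorphism $B\simeq\fB$ of Theorem~\ref{prav}, with $\bar{\fb}$ going to a distinguished ``bi-Whittaker'' functional; cyclicity of $\sM$ (weight space by weight space) over the image of $\fG^2$ is then the Kostant--Wallach/Tarasov theorem~\cite{KW},~\cite{Tar} in a form adapted to $\fgl_n\oplus\fgl_n$ acting through the degeneration (restriction of the doubled Gelfand--Tsetlin system to a slice transverse to $\fg_{\leq0}$ is an isomorphism onto functions), while density of the relevant $G$-orbit in $\fgl_n$ --- replacing the density of the $G_{\geq0}$-orbit of the principal nilpotent $\ff$ in $\fg_{>0}$ used in Lemma~\ref{Bekasovo} --- shows the ${\mathsf U}$-submodule generated by $\bar{\fb}$ exhausts $\sM$. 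Finally a Poincar\'e-series comparison (both sides are cofree over their respective polynomial subalgebras, with Hilbert series computed from the fixed-point description of $\CQ_{\ul{d}}$ on one side and from $\sM=\BC[\fg_{\leq0}]\otimes\sV$ on the other) promotes the resulting surjection to the desired isomorphism.

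The hard part is Step~3, and this is exactly where our present tools stop. In Lemma~\ref{Bekasovo} the cyclic vector is a genuine $\delta$-function at the origin of $\fg_{>0}$ and the density of the $G_{\geq0}$-orbit of $\ff$ is elementary; here one must instead control the $\hbar\to0$ degeneration of $\fV\otimes\fV$ globally over $\fgl_n$ --- equivalently, prove the ``doubled'' Kostant--Wallach statement and the density of a suitable principal-nilpotent-pair orbit --- and it is the absence of such a statement, rather than any of the formal manipulations above, that forces~\ref{ancient} to remain a conjecture.
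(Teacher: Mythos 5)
You should first note that the statement you were asked to prove is \conjref{ancient}, which the paper itself leaves \emph{unproved}: the introduction says explicitly that the authors ``were unable to describe the nonlocalized equivariant cohomology of $\bigsqcup_{\ul{d}}\CQ_{\ul{d}}$ as a $U(\fgl_n)^2$-module'' and only \emph{propose} this statement as an equivariant generalization of Conjecture~6.4 of~\cite{fk}. So there is no proof in the paper to compare against, and your text --- which you yourself end by conceding that Step~3 is out of reach --- is a strategy sketch rather than a proof. Within that caveat, your Steps~1--2 are sound and are exactly the manipulations the paper does carry out: the integral $\ul{\fU}{}^2$-action on $'B$ and the isomorphism $\ul{\fU}{}^2/(\hbar=0)\simeq{\mathsf U}$ are asserted in~\ref{if}; the generation of $H^\bullet_{G\times\BC^*}(\CQ_{\ul{d}})$ by K\"unneth components of Chern classes and their identification with elements of the double Gelfand--Tsetlin algebra are Lemma~\ref{trop} and Proposition~\ref{any}; and graded Nakayama then gives cyclicity of each $\bar B_{\ul{d}}$ over (the reduction of) $\fG^2$ applied to the image of the Whittaker vector $\fb$. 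This correctly reduces the conjecture to a purely representation-theoretic question.

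The genuine gap is precisely your Step~3, and it is worth naming what is missing more sharply than ``a doubled Kostant--Wallach statement.'' To upgrade the cyclic surjection ${\mathsf U}\cdot\bar\fb\twoheadrightarrow$ (nothing) into an isomorphism $\bar B\simeq H^{\frac{n(n-1)}{2}}_{\{\fg_{\leq0}\}}(\fgl_n,\CO)$ one needs two things that do not follow from the paper's machinery: (i) a ${\mathsf U}$-module map from $\bar B$ \emph{to} the local cohomology module in the first place --- in Lemma~\ref{Bekasovo} this comes for free because after setting $x_1=\cdots=x_n=\hbar=0$ the module $\ul{\fV}{}^*/(x,\hbar)$ is \emph{identified} with $\sV$ as a module over $\BC[\fg_{>0}]\rtimes U(\fg_{\geq0})$, whereas here one would have to construct the identification of $\bar B$ with a submodule of the completion of $\BC[\fg_{\leq0}]\otimes\sV$ over all of $\ft$ simultaneously, i.e.\ control the $\hbar\to0$ degeneration of $\fV\otimes\fV$ \emph{without} killing the $x_i$; and (ii) the analogue of the density of the $G_{\geq0}$-orbit of the principal nilpotent $\ff$ in $\fg_{>0}$, which in the doubled/global setting becomes a statement about an orbit in $\fgl_n$ whose closure must contain $\fg_{\leq0}$-directions as well --- this is not elementary and is not supplied by~\cite{KW},~\cite{Tar}, which concern the single Gelfand--Tsetlin system on the slice $\ff+\fg_{\geq0}$. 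Your honesty about this is appropriate: the absence of (i) and (ii) is exactly why the statement is a conjecture, and your proposal should be read as a correct reduction of \conjref{ancient} to these two open points, not as a proof.
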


Conjecture~6.4 of~\cite{fk} on the direct sum of {\em nonequivariant}
cohomology of $\CQ_{\ul{d}}$ is an immediate corollary of~\ref{ancient}.

\subsection{Relative Laumon spaces}
\label{relat}
We propose a generalization of Conjecture 6.4 of~\cite{fk} in a different
direction.
Let $\ul{d}=(d_1,\ldots,d_n)$ be an $n$-tuple of integers (not
necessarily positive).
Let $\sQ_{\ul{d}}$ be the moduli stack of flags of locally free sheaves
$\CW_1\subset\ldots\subset\CW_{n-1}\subset\CW_n$
on $\bC$ such that $\on{rk}(\CW_i)=i,\
\deg(\CW_i)=d_i$ (see~\cite{la2}). We have a representable
projective
morphism $\pi:\ \sQ_{\ul{d}}\to\Bun,\ \CW_\bullet\mapsto\CW_n$, where $\Bun$
stands for the moduli stack of $GL_n$-bundles on $\bC$. The fiber of $\pi$
over the trivial $GL_n$-bundle (an open point of $\Bun$) is $\CQ_{\ul{d}}$.
The correspondences
$\CE_{\ul{d},\alpha_{ij}}^0,\CE_{\ul{d},\alpha_{ij}}^\infty,
\CE_{\ul{d},\alpha_{ij}}^\bC$
of subsection~\ref{if} make perfect sense for the
stacks $\sQ_{\ul{d}}$ in place of $\CQ_{\ul{d}}$. As in {\em loc. cit.},
they give rise to the operators
$\ul{E}{}_{ij}^{(1)},\ul{E}{}_{ij}^{(2)},E_{ij}^\Delta$, etc. on the
constructible complex
$\sB:=\bigoplus_{\ul{d}\in\BZ^{n-1}}\pi_*\ul\BC{}_{\ul{d}}$
on $\Bun$ (where $\ul\BC{}_{\ul{d}}$ stands for the constant sheaf on
$\sQ_{\ul{d}}$). This constructible complex is the geometric Eisenstein
series of~\cite{la2}. The above operators give rise to the action of
$\bar\sU:=\BC[\fgl_n]\rtimes U(\fgl_n)$ on $\sB$: this follows from the
results of~\ref{if} by the argument of~\cite{bf},~3.8--3.11.
In particular, $\bar\sU$ acts on the stalks of $\sB$, and we propose a
conjecture describing the resulting $\bar\sU$-modules.

Recall that the isomorphism classes of $GL_n$-bundles on $\bC$ are
parametrized by the set $X^+$ of dominant weights of $GL_n$.
For $\eta\in X^+$ we denote by $\sB_\eta$ the corresponding stalk of $\sB$.
Also, we will denote by $\CO(\eta)$ the corresponding line bundle on the flag
variety $\CB_n$ of $GL_n$. We will keep the same notation for the lift
of $\CO(\eta)$ to the cotangent bundle $T^*\CB_n$. We will denote by
$L_\eta$ the direct image of $\CO(\eta)$ under the Springer resolution
morphism $T^*\CB_n\to\CN$ to the nilpotent cone $\CN\subset\fgl_n$.
The cohomology of the coherent sheaf $L_\eta$ carries a natural action
of $\bar\sU$.

\begin{conj}
\label{fren}
The $\bar\sU$-module $\sB_\eta$ is isomorphic to
$H^{\frac{n(n-1)}{2}}_{\fg_{<0}}(\CN,L_\eta)$ (cf. Conjecture~7.8
of~\cite{ffkm}).
\end{conj}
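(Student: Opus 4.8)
The plan is to reduce Conjecture~\ref{fren} to a statement about a single fibre of $\pi$ and then to run a Whittaker-type argument parallel to the proof of Theorem~\ref{braverman} (via Lemma~\ref{Bekasovo}). First I would apply proper base change to the representable projective morphism $\pi\colon\sQ_{\ul d}\to\Bun$: for the $GL_n$-bundle of isomorphism type $\eta$, which on $\bC\cong\BP^1$ splits as $\CL_\eta=\bigoplus_i\CO(\eta_i)$, the stalk $\sB_\eta$ is identified with $\bigoplus_{\ul d}H^\bullet(\CQ^\eta_{\ul d})$, where $\CQ^\eta_{\ul d}$ is the ``twisted Laumon space'' of flags of locally free subsheaves of $\CL_\eta$ with prescribed ranks and degrees. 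The correspondences $\CE^0_{\ul d,\alpha_{ij}},\CE^\infty_{\ul d,\alpha_{ij}},\CE^\bC_{\ul d,\alpha_{ij}}$ restrict to these fibres and base change is compatible with the operators they induce, so the $\bar\sU$-action on $\sB_\eta$ is the evident one on $\bigoplus_{\ul d}H^\bullet(\CQ^\eta_{\ul d})$; for $\eta=0$ this is Conjecture~6.4 of~\cite{fk}.

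Next I would unwind the right-hand side. Since $\eta$ is dominant, Broer's vanishing gives $R^{>0}\varpi_*\CO(\eta)=0$ for the Springer resolution $\varpi\colon T^*\CB_n\to\CN$, hence $L_\eta=\varpi_*\CO(\eta)$; and since $R\varpi_*$ intertwines $R\Gamma_{\fg_{<0}}$ with $R\Gamma_{\varpi^{-1}(\fg_{<0})}$ one gets $H^{\frac{n(n-1)}{2}}_{\fg_{<0}}(\CN,L_\eta)\cong H^{\frac{n(n-1)}{2}}_{\varpi^{-1}(\fg_{<0})}(T^*\CB_n,\CO(\eta))$. Here $\varpi^{-1}(\fg_{<0})$ is the Lagrangian closure of the conormal bundle to the open Bruhat cell (it maps birationally onto $\fg_{<0}$), and the Bruhat stratification of $\CB_n$ induces a filtration on this local cohomology whose graded pieces should be an $S_n$-indexed sum of explicit $\bar\sU$-modules of (dual) Verma type twisted by $\eta$ --- to be matched with the $S_n$-indexed torus-fixed-point data of $\CQ^\eta_{\ul d}$, exactly as in Theorems~\ref{feigin} and~\ref{prav}.

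The comparison then has two halves. On the geometric side, Lemma~\ref{beau} and the fixed-point computation in the proof of Theorem~\ref{braverman} show that in each degree $H^\bullet(\CQ^\eta_{\ul d})$ is generated by the K\"unneth components of the Chern classes of the tautological bundles, which under the desired isomorphism become the action on the unit class of Gelfand-Tsetlin-type functions in $\BC[\fgl_n]\subset\bar\sU$ (the analogue of Corollary~\ref{mor}); together with the raising correspondences this presents $\bigoplus_{\ul d}H^\bullet(\CQ^\eta_{\ul d})$ as a cyclic $\bar\sU$-module with distinguished generator the unit class in the lowest-weight component. On the representation side, the principal nilpotent $\ff=\sum_i\ff_i$ lies in $\fg_{<0}$ and has dense adjoint $G$-orbit in $\CN$, so, as in the density argument of Lemma~\ref{Bekasovo}, the canonical local cohomology class of $L_\eta$ supported along $\fg_{<0}$ near $\ff$ generates $H^{\frac{n(n-1)}{2}}_{\fg_{<0}}(\CN,L_\eta)$ over $\bar\sU$. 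It then remains to match the two cyclic generators and to verify that the annihilator ideals coincide; the latter I would do by producing compatible bases on the two sides and comparing the matrix coefficients of the correspondence operators with the known $\bar\sU$-action on the Springer side, in the spirit of Theorem~\ref{feigin}.

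The hard part will be pinning down the $\bar\sU$-module structure of $H^{\frac{n(n-1)}{2}}_{\fg_{<0}}(\CN,L_\eta)$ explicitly enough, for general dominant $\eta$, to make this matching work --- already the case $\eta=0$ is the open Conjecture~6.4 of~\cite{fk}. The difficulty is that the nonequivariant correspondence action on $\sB_\eta$ does not come with the direct bridge to an explicit representation that torus-fixed-point localization supplied in Theorems~\ref{feigin} and~\ref{prav}, so a genuinely new geometric input seems required. One natural route is to work instead with a $\widetilde T\times\BC^*$-equivariant/integral deformation over the relevant stack --- an ``$\hbar$- and $x_i$-version'' of $\sB$, of which Conjecture~\ref{ancient} is the $\eta=0$ prototype and where the Springer-side object becomes a more tractable universal Verma-type module --- prove the comparison there by fixed points along the lines of Theorem~\ref{braverman}, and only then specialize $\hbar,x_i\to0$.
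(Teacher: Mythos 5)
This statement is Conjecture~\ref{fren}: the paper offers no proof of it, and none is known, so there is no argument of the authors to compare yours against. What you have written is a reasonable plan of attack, and its first steps are sound: proper base change over the point of $\Bun$ of type $\eta$ does identify $\sB_\eta$ with $\bigoplus_{\ul d}H^\bullet(\CQ^\eta_{\ul d})$ for the twisted Laumon spaces, compatibly with the correspondence operators; and on the Springer side the decomposition of $\varpi^{-1}(\fg_{<0})$ into the $S_n$-indexed union of conormal varieties to the Bruhat cells (not just the closure of the one over the open cell, as you note) is the right shadow of the fixed-point combinatorics of Theorem~\ref{prav}. But this is a reduction, not a proof.

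The genuine gap is the one you yourself flag in the last paragraph, and it is fatal to the proposal as a proof: the entire method of the paper for identifying cohomology with a $\bar\sU$-module (Theorems~\ref{feigin}, \ref{braverman}, \ref{prav}) runs through localization to torus fixed points in \emph{equivariant} cohomology, where matrix coefficients of the correspondences can be computed and matched against Gelfand--Tsetlin formulas. In the nonequivariant setting of $\sB_\eta$ that bridge is gone; the cyclicity of the unit class over $\BC[\fgl_n]\subset\bar\sU$ and the density-of-the-principal-orbit argument from Lemma~\ref{Bekasovo} would, at best, show both sides are cyclic $\bar\sU$-modules, but you have no mechanism for proving the annihilators agree. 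Already for $\eta=0$ this is exactly the open Conjecture~6.4 of~\cite{fk} (equivalently the $\hbar=0,x_i=0$ specialization of Conjecture~\ref{ancient}), so your argument is circular at its core step. Your suggested remedy --- constructing an equivariant/integral deformation of $\sB$ over the stack and specializing --- is plausibly the right idea, but it is precisely the missing construction, not a consequence of anything in the paper. The statement should therefore be regarded as remaining conjectural.
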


\section{Equivariant $K$-ring of $\fQ_{\ul{d}}$ and quantum Gelfand-Tsetlin
algebra}

\subsection{Quantum Universal Enveloping Algebra}
\label{quantum}
We preserve the setup of~\cite{bf} with the following
slight changes of notation.
Now $\widetilde{T}$ stands for a $2^n$-cover of a Cartan torus of $GL_n$
as opposed to $SL_n$ in {\em loc. cit.}
Now $U'$ stands for the quantum universal enveloping algebra of $\fgl_n$
over the field $\BC(\widetilde{T}\times\BC^*)$, as opposed to the quantum
universal enveloping algebra of $\fsl_n$ in 2.26 of {\em loc. cit.}

For the quantum universal enveloping algebra of $\fgl_n$ we follow the
notations of section~2 of~\cite{mtz}. Namely, $U'$ has generators
$t_{ij},\bar{t}_{ij},\ 1\leq i,j\leq n$, subject to relations~(2.4) of
{\em loc. cit.} The standard Chevalley generators are expressed via
$t_{ij},\bar{t}_{ij}$ as follows:
$$K_i=t_{i+1,i+1}\bar{t}_{ii},\ E_i=(v-v^{-1})^{-1}\bar{t}_{ii}t_{i+1,i},\
F_i=-(v-v^{-1})^{-1}\bar{t}_{i,i+1}t_{ii}$$
(note that this presentation differs from the one in~(2.6) of {\em loc. cit.}
by an application of Chevalley involution).
Note also that $U'$ is generated by $t_{ii},\bar{t}_{ii},\ 1\leq i\leq n;\
t_{i+1,i},\bar{t}_{i,i+1},\ 1\leq i\leq n-1$.
We denote by $U'_{\leq0}$ the subalgebra of $U'$ generated by
$t_{ii},\bar{t}_{ii},\bar{t}_{i,i+1}$.
It acts on the field $\BC(\widetilde{T}\times\BC^*)$ as follows:
$\bar{t}_{i,i+1}$ acts trivially for any $1\leq i\leq n-1$, and
$\bar{t}_{ii}=t_{ii}^{-1}$ acts by multiplication by
$t_i^{-1}v^{1-i}$. We define the {\em universal Verma module}
$\fM$ over $U'$ as
$\fM:=U'\otimes_{U'_{\leq0}}\BC(\widetilde{T}\times\BC^*)$.

Recall that $M=\oplus_{\ul{d}}M_{\ul{d}},\
M_{\ul{d}}=K^{\widetilde{T}\times\BC^*}(\fQ_{\ul{d}})
\otimes_{\BC[\widetilde{T}\times\BC^*]}\BC(\widetilde{T}\times\BC^*)$
(cf.~\cite{bf},~2.7).

We define the following operators on $M$ (well defined since the
correspondences $\fE_{\ul{d},i}$ are smooth, and the
$\widetilde{T}\times\BC^*$-fixed point sets are finite):
$$t_{ii}=t_iv^{d_{i-1}-d_i+i-1}:\ M_{\ul{d}}\to M_{\ul{d}};\
\bar{t}_{ii}=t_{ii}^{-1};$$
$$\bar{t}_{i,i+1}=(v^{-1}-v)t_{i+1}^it_i^{-i-1}v^{(2i+1)d_i-(i+1)d_{i-1}
-id_{i+1}-2i+1}\bp_*\bq^*:\ M_{\ul{d}}\to M_{\ul{d}-i};$$
$$t_{i+1,i}=(v^{-1}-v)t_{i+1}^{-i-1}t_i^iv^{id_{i-1}+(i+1)d_{i+1}-(2i+1)d_i
-1}\bq_*(\fL_i\otimes\bp^*):\ M_{\ul{d}}\to M_{\ul{d}+i}.$$
According to Theorem~2.12 of~\cite{bf}, these operators satisfy the
relations in $U'$, i.e. they give rise to the action of $U'$ on $M$.
Moreover, there is a unique isomorphism $\psi:\ M\to\fM$ carrying
$[\CO_{\fQ_0}]\in M$ to the lowest weight vector
$1\in\BC(\widetilde{T}\times\BC^*)\subset\fM$.

\subsection{Quantum Gelfand-Tsetlin basis}
The construction of Gelfand-Tsetlin basis for the representations of
quantum $\fgl_n$ goes back to M.~Jimbo~\cite{j}. We will follow the approach
of~\cite{mtz}. Given $\widetilde{\ul{d}}$ and the corresponding Gelfand-Tsetlin
pattern $\Lambda=\Lambda(\widetilde{\ul{d}})$ (see subsection~\ref{classical'}),
we define $\xi_{\widetilde{\ul{d}}}=\xi_\Lambda\in\fM$ by the formula~(5.12)
of~\cite{mtz}. According to Proposition~5.1 of {\em loc. cit.}, the set
$\{\xi_{\widetilde{\ul{d}}}\}$ (over all collections $\widetilde{\ul{d}}$)
forms a basis of $\fM$.

Recall the basis $\{[\widetilde{\ul{d}}]\}$ of $M$ introduced in~2.16
of~\cite{bf}: the direct images of the structure sheaves of the torus-fixed
points. The following theorem is proved
absolutely similarly to Theorem~\ref{feigin}, using Proposition~5.1
of~\cite{mtz}.

\begin{thm}
\label{Feigin}
The isomorphism $\psi:\ M\to\fM$ of subsection~\ref{quantum} takes
$[\widetilde{\ul{d}}]$ to $c_{\widetilde{\ul{d}}}\xi_{\widetilde{\ul{d}}}$
where
$$c_{\widetilde{\ul{d}}}=(v^2-1)^{-|\ul{d}|}
v^{|\ul{d}|+\sum_{i}{id_{i-1}d_i}-\sum_{i}
{\frac{2i+1}{2}d_i^2}-\frac{1}{2}\sum_{i,j}{d_{i,j}^2}}
\prod_{i}{t_i^{i(d_i-d_{i-1})}}
\prod_{j}{t_j^{\sum_{k\geq j}{d_{k,j}}}}.$$
\qed
\end{thm}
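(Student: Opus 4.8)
The plan is to argue exactly as in the proof of \thmref{feigin}, with the classical Gelfand--Tsetlin formulas of \cite{m} replaced by their quantum counterparts from \cite{mtz}. The first step is to pin down $\psi([\widetilde{\ul{d}}])$ up to a scalar. Just as in the Remark following \thmref{feigin}, the quantum Gelfand--Tsetlin basis $\{\xi_{\widetilde{\ul{d}}}\}$ is characterized, up to rescaling its individual members, by the vanishing of the matrix coefficient of $E_k$ (or $F_k$) between $\xi_\Lambda$ and $\xi_{\Lambda'}$ whenever $\lambda_{ij}\ne\lambda'_{ij}$ for some $i>k$; this is immediate from the inductive formula~(5.12) of \cite{mtz}. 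On the geometric side, the correspondence $\fE_{\ul{d},i}$ alters only the $i$-th row $(d_{ij})_j$ of the index $\widetilde{\ul{d}}$, so the matrix coefficients of the raising operator $t_{i+1,i}$ and the lowering operator $\bar t_{i,i+1}$ (hence also of $E_i,F_i$, which differ from them by invertible diagonal operators and scalars) between $[\widetilde{\ul{d}}]$ and $[\widetilde{\ul{d}}{}']$ vanish unless $d_{kj}=d'_{kj}$ for all $k>i$. Therefore $\psi([\widetilde{\ul{d}}])=c_{\widetilde{\ul{d}}}\xi_{\widetilde{\ul{d}}}$ for a scalar $c_{\widetilde{\ul{d}}}$, with $c_0=1$ since $\psi$ sends $[\CO_{\fQ_0}]$ to $\xi_0=1$. (Alternatively, one may run this step in the form of Corollary~\ref{IAS}/Proposition~\ref{gc}: the quantum Gelfand--Tsetlin algebra acts with simple joint spectrum, so $[\widetilde{\ul{d}}]$ is the unique-up-to-scalar joint eigenvector with the eigenvalues prescribed by $\Lambda(\widetilde{\ul{d}})$.)

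To determine the scalars $c_{\widetilde{\ul{d}}}$ I would compare the nonzero matrix coefficients in the two bases. On the representation side the action of $t_{i+1,i}$ and $\bar t_{i,i+1}$ on $\xi_\Lambda$ is given by Jimbo's formulas, i.e. Proposition~5.1 together with~(5.12) of \cite{mtz}, as products of $v$-deformed linear factors in the $\lambda_{ij}$. On the geometric side the $K$-theoretic fixed-point localization computation of \cite{bf} --- the $q$-analogue of Corollary~2.20 of \cite{bf}, carried out with the precise normalizations of $t_{ii},\bar t_{i,i+1},t_{i+1,i}$ fixed in subsection~\ref{quantum} (the explicit powers of $v$ and of the equivariant parameters $t_1,\ldots,t_n$, and the twist by $\fL_i$ in the definition of $t_{i+1,i}$) --- gives the matrix coefficients in the basis $\{[\widetilde{\ul{d}}]\}$. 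Dividing the two expressions for a single elementary step $\widetilde{\ul{d}}\rightsquigarrow\widetilde{\ul{d}}{}'$ (raising one $d_{ij}$ by one) yields $c_{\widetilde{\ul{d}}{}'}/c_{\widetilde{\ul{d}}}$ as an explicit monomial in $v$ and the $t_i$. Since the operators genuinely satisfy the relations of $U'$ (Theorem~2.12 of \cite{bf}, recalled in subsection~\ref{quantum}), these ratios are automatically consistent, and a direct induction on $|\ul{d}|$ then shows that the product formula for $c_{\widetilde{\ul{d}}}$ in the statement is the unique solution normalized by $c_0=1$.

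The only real obstacle is the bookkeeping: one must line up the three normalization prefactors in the definitions of $t_{ii},\bar t_{i,i+1},t_{i+1,i}$ (with all their powers of $v$ and $t_i$, and the factor $\fL_i$) against the normalization of the quantum Gelfand--Tsetlin vectors used in \cite{mtz}, and then sum the exponents of the step-ratios over a chain building $\widetilde{\ul{d}}$ from $0$. The $v$-exponents telescope to the quadratic expression $|\ul{d}|+\sum_i id_{i-1}d_i-\sum_i\frac{2i+1}{2}d_i^2-\frac12\sum_{i,j}d_{ij}^2$, the scalar prefactor collects into $(v^2-1)^{-|\ul{d}|}$, and the $t_i$-exponents assemble into $\prod_i t_i^{i(d_i-d_{i-1})}\prod_j t_j^{\sum_{k\ge j}d_{kj}}$; once the normalizations are matched, as in \thmref{feigin}, the identification of matrix coefficients --- and hence of $c_{\widetilde{\ul{d}}}$ --- is by inspection.
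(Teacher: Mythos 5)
Your proposal is correct and follows essentially the same route as the paper, which proves \thmref{Feigin} by repeating the argument of \thmref{feigin} verbatim — identifying the matrix coefficients of the generators in the fixed-point basis (via the $K$-theoretic localization computations of \cite{bf}) with those in the quantum Gelfand--Tsetlin basis (Proposition~5.1 of \cite{mtz} in place of Theorem~2.3 of \cite{m}) and reading off the normalizing scalar $c_{\widetilde{\ul{d}}}$ by inspection. Your preliminary uniqueness-up-to-scalar step is exactly the content of the Remark following \thmref{feigin}, transported to the quantum setting.
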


\subsection{Quantum Casimirs}
Let $Cas^v_k$ be the quantum Casimir element of the completion of the
quantum universal enveloping algebra $U_v(\fgl_{k})$. The quantum Casimir
element is defined in section~6.1. of \cite{Lu} in terms of the universal
$R$-matrix lying in the completion of $U_v(\fgl_k)\otimes U_v(\fgl_k)$.
According to \cite{Lu}, Proposition~6.1.7, the eigenvalue of $Cas^v_k$ on
the Verma module over $U_v(\fgl_{k})$ with the highest weight $\lambda_k$
is $v^{-(\lambda_k,\lambda_k+2\rho_k)}$. This means that the operator
$Cas^v_k$ is diagonal in the Gelfand-Tsetlin basis, and the eigenvalue of
$Cas^v_k$ on the basis vector $\xi_{\widetilde{\ul{d}}}=\xi_\Lambda$ is
$v^{-\sum_{j\leq k}\lambda_{kj}(\lambda_{kj}+k-2j+1)}$
(with $t_iv^{j-1-d_{ki}}=v^{\lambda_{ki}}$).
Consider the following ``corrected'' Casimir operators $$
\wt{Cas^v_k}:=Cas^v_k\cdot \prod\limits_{j=1}^k t_{jj}^{k-2}
v^{\sum\limits_{j=1}^k (\lambda_{nj}-j)
(\lambda_{nj}-j+1)-
\frac{k(k-1)(k-2)}{3}}
$$

Lemma~\ref{ias} admits the following

\begin{cor}
\label{Princeton}
a) The operator of tensor multiplication by the class $[\D_k]$ in $M$
is diagonal in the basis $\{[\widetilde{\ul{d}}]\}$, and the eigenvalue
corresponding to $\widetilde{\ul{d}}=(d_{ij})$ equals
$\prod_{j\leq k}t_j^{2-2d_{kj}}v^{d_{kj}(d_{kj}-1)}$.

b) The isomorphism $\psi:\ M\to\fM$ carries the operator of tensor
multiplication by $[\D_k]$ to the operator $\wt{Cas^v_k}^{-\frac{1}{2}}$.
\end{cor}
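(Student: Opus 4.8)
The plan is to reduce everything to a computation of eigenvalues in the Gelfand-Tsetlin basis, exactly parallel to the classical Corollary~\ref{IAS}. First I would establish part a). By Theorem~\ref{Feigin} the basis $\{[\widetilde{\ul{d}}]\}$ of $M$ goes (up to scalars) to the Gelfand-Tsetlin basis $\{\xi_{\widetilde{\ul{d}}}\}$ of $\fM$; since tensor multiplication by a line bundle class is a ring operation and the $[\widetilde{\ul{d}}]$ are the fixed-point basis, the operator $[\D_k]\otimes(-)$ is automatically diagonal in $\{[\widetilde{\ul{d}}]\}$. To read off the eigenvalue I would invoke localization: the eigenvalue of $[\D_k]\otimes(-)$ on $[\widetilde{\ul{d}}]$ equals the class of $\D_k$ restricted to the fixed point $\widetilde{\ul{d}}$, i.e.\ the character of $\widetilde{T}\times\BC^*$ in the fiber of $\D_k$ at $\widetilde{\ul{d}}$, written multiplicatively in $K$-theory. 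This character was computed additively in Lemma~\ref{ias} as $\sum_{j\leq k}(1-d_{kj})x_j+\tfrac{d_{kj}(d_{kj}-1)}{2}\hbar$; passing to $K$-theory (where the variable $t_j$ corresponds to $e^{x_j}$, more precisely to the square root with $t_j^2$ the genuine $\widetilde T$-weight, and $v$ to $e^{\hbar/2}$) turns this into the product $\prod_{j\leq k}t_j^{2-2d_{kj}}v^{d_{kj}(d_{kj}-1)}$, which is exactly the claimed eigenvalue. This gives a).

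For part b) I would then compare this eigenvalue with the eigenvalue of $\wt{Cas^v_k}^{-1/2}$ on $\xi_{\widetilde{\ul{d}}}=\xi_\Lambda$. By the discussion preceding the corollary, $Cas^v_k$ acts on $\xi_\Lambda$ by $v^{-\sum_{j\leq k}\lambda_{kj}(\lambda_{kj}+k-2j+1)}$, where $v^{\lambda_{ki}}=t_iv^{i-1-d_{ki}}$. Multiplying by the explicit Cartan correction factor $\prod_{j=1}^k t_{jj}^{k-2}\,v^{\sum_{j=1}^k(\lambda_{nj}-j)(\lambda_{nj}-j+1)-\frac{k(k-1)(k-2)}{3}}$ and using $t_{jj}=t_jv^{d_{j-1}-d_j+j-1}$ on $M_{\ul{d}}$ together with $\lambda_{nj}=\hbar^{-1}x_j+j-1$ (so $v^{\lambda_{nj}-j}=t_jv^{-1}$ after matching conventions), I would substitute, expand the exponent of $v$ and the exponent of each $t_j$, and check that the corrected eigenvalue is precisely the square of $\prod_{j\leq k}t_j^{2-2d_{kj}}v^{d_{kj}(d_{kj}-1)}$. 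Taking $-1/2$ power then matches a), and since both operators are diagonal in the same basis, they coincide; this is the quantum analogue of Corollary~\ref{IAS}c). The scalars $c_{\widetilde{\ul{d}}}$ from Theorem~\ref{Feigin} are irrelevant here because a diagonal operator has the same eigenvalues in any rescaling of a basis of eigenvectors.

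The main obstacle is purely bookkeeping: one must pin down precisely how the multiplicative $K$-theoretic variables $t_i, v$ relate to the additive cohomological variables $x_i,\hbar$ (note the factor-of-two conventions: $\widetilde T$ acts by $t_i^2$, and $\hbar$ is \emph{twice} the generator), and then verify that the rather elaborate-looking correction term defining $\wt{Cas^v_k}$ has been chosen exactly so as to cancel the highest-weight-dependent part $\sum(\lambda_{nj}-j)(\lambda_{nj}-j+1)$ and the cubic constant, leaving only the $d_{kj}$-dependent terms. This is a finite, if somewhat tedious, exponent-matching computation; there is no conceptual difficulty once Theorem~\ref{Feigin}, Lemma~\ref{ias}, and the localization theorem are in hand. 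I expect the writeup to consist of the two eigenvalue computations placed side by side with a one-line remark that diagonal operators with equal eigenvalues are equal.
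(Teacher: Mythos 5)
Your proposal matches the paper's (very terse) proof exactly: part a) is read off from the $\widetilde{T}\times\BC^*$-character of the fiber of $\D_k$ at a fixed point (Lemma~\ref{ias}) rewritten multiplicatively via localization, and part b) is the side-by-side eigenvalue comparison with $\wt{Cas^v_k}$ in the Gelfand--Tsetlin basis, with the rescaling constants $c_{\widetilde{\ul{d}}}$ indeed irrelevant. One small slip in your sketch: for $\wt{Cas^v_k}{}^{-1/2}$ to have eigenvalue $\prod_{j\leq k}t_j^{2-2d_{kj}}v^{d_{kj}(d_{kj}-1)}$ the eigenvalue of $\wt{Cas^v_k}$ must be the \emph{inverse} square of that product, not the square --- your own final consistency check (``taking the $-1/2$ power matches a)'') would catch this when you carry out the exponent bookkeeping.
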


\begin{proof}
a) is immediate.

b) straightforward from a) and the formula for eigenvalues of $Cas^v_k$.
\end{proof}

\subsection{Quantum Gelfand-Tsetlin algebra and $K$-rings}
Recall the Whittaker vector $\fk=\sum_{\ul{d}}\fk_{\ul{d}}$ of~\cite{bf}~2.30.
According to Proposition~2.31 of {\em loc. cit.},
$\psi[\CO_{\ul{d}}]=\fk_{\ul{d}}$.

Consider the ``quantum Gelfand-Tsetlin algebra''
$\CG\subset\operatorname{End}(\fM)$
generated by all the $\wt{Cas^v_k}^{-\frac{1}{2}}$ over the field
$\BC(\widetilde{T}\times\BC^*)$.
We denote by $\CI_{\ul{d}}\subset\CG$ the annihilator ideal of the vector
$\fk_{\ul{d}}\in\fV$, and we denote by $\CG_{\ul{d}}$ the quotient algebra
of $\fG$ by $\CI_{\ul{d}}$. The action of $\CG$ on $\fk_{\ul{d}}$ gives rise
to an embedding $\CG_{\ul{d}}\hookrightarrow\fM_{\ul{d}}$.

\begin{prop}
\label{GC}
a) $\CG_{\ul{d}}\iso\fM_{\ul{d}}$.

b) The composite morphism
$\psi:\ K^{\widetilde{T}\times\BC^*}(\fQ_{\ul{d}})
\otimes_{\BC[\widetilde{T}\times\BC^*]}\BC(\widetilde{T}\times\BC^*)=
M_{\ul{d}}\iso\fM_{\ul{d}}\iso\CG_{\ul{d}}$
is an {\em algebra} isomorphism.

c) The algebra $K^{\widetilde{T}\times\BC^*}(\fQ_{\ul{d}})
\otimes_{\BC[\widetilde{T}\times\BC^*]}\BC(\widetilde{T}\times\BC^*)$
is generated by
$\{[\D_k]:\ k\geq2,\ d_k\ne0\ne d_{k-1}\}$.
\end{prop}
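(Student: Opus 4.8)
The plan is to follow the proof of Proposition~\ref{gc} almost verbatim, substituting Theorem~\ref{feigin} and Corollary~\ref{IAS} by their $K$-theoretic counterparts Theorem~\ref{Feigin} and Corollary~\ref{Princeton}, and the shifted classical Casimirs $\wt{Cas_k}$ by the quantum ones $\wt{Cas^v_k}$.

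First I would establish (c). By the Localization theorem in equivariant $K$-theory, $M_{\ul{d}}$ is the algebra of functions on the finite reduced fixed point scheme $\fQ_{\ul{d}}^{\widetilde{T}\times\BC^*}$; in the basis $\{[\widetilde{\ul{d}}]\}$ this means $M_{\ul{d}}$ is the algebra of operators on $M_{\ul{d}}$ which are diagonal in this basis. By Theorem~\ref{Feigin} the isomorphism $\psi$ sends each $[\widetilde{\ul{d}}]$ to a nonzero scalar multiple of $\xi_{\widetilde{\ul{d}}}$, so $\psi$ identifies $M_{\ul{d}}$ with the algebra of operators on $\fM_{\ul{d}}$ diagonal in the Gelfand--Tsetlin basis. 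By Corollary~\ref{Princeton}(b), $\psi$ carries tensor multiplication by $[\D_k]$ to $\wt{Cas^v_k}^{-1/2}$, whose eigenvalue on $\xi_{\widetilde{\ul{d}}}$ equals the Laurent monomial $\prod_{j\le k}t_j^{2-2d_{kj}}v^{d_{kj}(d_{kj}-1)}$ by Corollary~\ref{Princeton}(a). Since $t_1,\dots,t_n,v$ are algebraically independent, the exponent of $t_j$ in this monomial recovers $d_{kj}$, so the eigenvalue determines the entire $k$-th row $(d_{k1},\dots,d_{kk})$ of $\widetilde{\ul{d}}$. If $d_k=0$ this row vanishes, and if $d_{k-1}=0$ the interlacing inequalities $d_{k-1,j}\ge d_{kj}$ force it to be $(0,\dots,0,d_k)$; in either case $[\D_k]$ acts by a scalar on $\fM_{\ul{d}}$. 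For the remaining indices $k$ (those with $d_k\ne0\ne d_{k-1}$) the corresponding rows, together with the fixed first and top rows of the pattern, recover $\widetilde{\ul{d}}$ completely. Thus $\{[\D_k]:k\ge2,\ d_k\ne0\ne d_{k-1}\}$ is a family of commuting semisimple operators with pairwise distinct joint eigenvalues on the basis $\{[\widetilde{\ul{d}}]\}$, hence (by Lagrange interpolation) generates the whole algebra of diagonal operators, i.e. $M_{\ul{d}}$; this proves (c).

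For (a) and (b) I would repeat the argument of Proposition~\ref{gc}(a--b). Recall from~\cite{bf}, Proposition~2.31, that $\psi([\CO_{\fQ_{\ul{d}}}])=\fk_{\ul{d}}$. Iterating Corollary~\ref{Princeton}(b) yields, for any monomial, $\psi([\D_{k_1}]\cdots[\D_{k_m}])=\wt{Cas^v_{k_1}}^{-1/2}\cdots\wt{Cas^v_{k_m}}^{-1/2}\fk_{\ul{d}}$. By (c) these monomials span $M_{\ul{d}}$ over $\BC(\widetilde{T}\times\BC^*)$, so $\psi(M_{\ul{d}})=\CG\cdot\fk_{\ul{d}}$, which is precisely the image of the embedding $\CG_{\ul{d}}\hookrightarrow\fM_{\ul{d}}$; as $\psi$ is onto, this embedding must be an isomorphism, giving (a). The same monomial identity shows that the resulting bijection $M_{\ul{d}}\iso\CG_{\ul{d}}$ sends the algebra generators $[\D_k]$ to $\wt{Cas^v_k}^{-1/2}\bmod\CI_{\ul{d}}$, the unit to the unit, and is multiplicative on products of generators; since the $[\D_k]$ generate $M_{\ul{d}}$ as an algebra, it is an algebra isomorphism, giving (b).

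The only step that is not formal bookkeeping is the separation of eigenvalues used in (c): one must check that, as $\widetilde{\ul{d}}$ ranges over the fixed points of $\fQ_{\ul{d}}$, the rows $(d_{kj})$ with $d_k\ne0\ne d_{k-1}$ — read off from the eigenvalues of $[\D_k]$ — together with the interlacing recover all of $\widetilde{\ul{d}}$. This is the multiplicative shadow of Corollary~\ref{IAS}(b), and I expect it to be entirely elementary, but it is the combinatorial heart of the argument; everything else is parallel to the cohomological case of Proposition~\ref{gc}.
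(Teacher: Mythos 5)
Your proof is correct and follows exactly the route the paper takes: the paper's own proof of this proposition is literally ``the same as for Proposition~\ref{gc}'', i.e.\ transporting the diagonal-operator/joint-eigenvalue argument through $\psi$ using Theorem~\ref{Feigin} and Corollary~\ref{Princeton} in place of Theorem~\ref{feigin} and Corollary~\ref{IAS}. Your explicit verification that the exponents of the $t_j$ in the eigenvalue $\prod_{j\le k}t_j^{2-2d_{kj}}v^{d_{kj}(d_{kj}-1)}$ recover the $k$-th row of $\widetilde{\ul{d}}$, and that the interlacing inequalities handle the indices with $d_k=0$ or $d_{k-1}=0$, is the separation-of-eigenvalues step the paper leaves implicit, and it checks out.
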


\begin{proof}
The proof is the same as for Proposition~\ref{gc}.
\end{proof}

\end{document}